\def\mapright#1.{\buildrel #1 \over \longrightarrow}
\def\sc#1{{\mathcal #1}}
\def\cal#1{{\mathcal #1}}
\def \C  {\sc C}
\def \D {\sc D}
\def \Cf {{\sc C}_f}
\def \s  {\sc S}
\def \P {\sc P}
\def \N {N_{\bullet}}
\def\Fun  {\operatorname{Fun}}
\def\WR  {\operatorname{WR}}
\def\tfiber{\operatorname{tfiber}}
\def\hfib{\operatorname{hofiber}}
\def\hofib{\operatorname{hofiber}}
\def\ifiber{\operatorname{ifiber}}
\def\Fun{\operatorname{Fun}}
\def\WR{\operatorname{WR}}
\def\Hom{\operatorname{Hom}}
\def\HomFun{\operatorname{Hom}_{\Fun}}
\def\Cfn#1{{\sc C}_f^{\times #1}}
\def\holim{\operatorname{holim}}
\def\hocolim{\operatorname{hocolim}}
\def\sk{\mathrm{sk}}
\def\longrightarrow{\relbar\joinrel\rightarrow}
\def\longleftarrow{\leftarrow\joinrel\relbar}
\def\mapright#1.{\buildrel #1 \over \longrightarrow}
\def\mapleft#1.{\buildrel #1 \over \longleftarrow}
\def\mapup#1.{\Big\uparrow\rlap{$\vcenter{\hbox{$\scriptstyle#1$}}$}}
\def\mapdown#1.{\Big\downarrow\rlap{$\vcenter{\hbox{$\scriptstyle#1$}}$}}
\def\ox{\otimes}
\def\M{\mathcal{M}}
\def\x{\times}
\newcommand{\pt}{\bullet}
\newcommand{\ra}{\rightarrow}
\newtheorem{lem}{Lemma}[section]
\newtheorem{rem}[lem]{Remark}
\newtheorem{prop}[lem]{Proposition}
\newtheorem{thm}[lem]{Theorem}
\newtheorem{cor}[lem]{Corollary}
\newtheorem{defn}[lem]{Definition}
\newtheorem{ex}[lem]{Example}
\title{Cross effects and calculus in an unbased setting \\(with an appendix by Rosona Eldred)}
\author{Kristine Bauer }
\address{Department of Mathematics \& Statistics, University of Calgary}
\email{kristine@math.ucalgary.ca}
\author{Brenda Johnson}
\address{Department of Mathematics, Union College}
\email{johnsonb@union.edu}
\author{Randy McCarthy}
\address{Department of Mathematics, University of Illinois at Urbana-Champaign}
\email{randy@math.uiuc.edu}
\address{Department of Mathematics, Universit\"at  Hamburg}
\email{rosona.eldred@math.uni-hamburg.de}
\begin{document}

\begin{abstract}
We study functors $F:\Cf\rightarrow \mathcal {D}$ where $\C$ and $\mathcal D$ are simplicial model categories and $\Cf$ is the category consisting of objects that factor a fixed morphism $f:A\rightarrow B$ in $\C$.   We define the analogs of Eilenberg and Mac Lane's cross effect functors in this context, and identify explicit adjoint pairs of functors whose associated cotriples are the  diagonals of the cross effects.   With this, we generalize the cotriple Taylor tower construction of \cite{RB} from the setting of functors from pointed categories to abelian categories to that of functors from $\Cf$ to $\s$, a suitable category of spectra, to produce a 
tower of functors $\dots \rightarrow \Gamma_{n+1}F\rightarrow \Gamma_nF\rightarrow \Gamma_{n-1}F\rightarrow \dots \rightarrow F(B)$ whose $n$th term is a degree $n$ functor.   We compare this tower to Goodwillie's tower, $\dots\rightarrow P_{n+1}F\rightarrow P_nF\rightarrow P_{n-1}F\rightarrow \dots \rightarrow F(B)$, of $n$-excisive approximations to $F$ found in \cite{G3}.   When $F$ is a functor that commutes with realizations, the towers agree.  More generally, for functors that do not commute with realizations, we show
that the terms of the towers agree when evaluated at the initial object of $\Cf$.  
\end{abstract}

\maketitle

\section{Introduction}
Tom Goodwillie's calculus of homotopy functors is a technique for studying homotopy functors of spaces and spectra (\cite{G2}, \cite{G3}).  It provides a means by which a homotopy functor can be approximated by an $n$-excisive functor in a manner analogous to the degree $n$ Taylor polynomial approximation of a real-valued function.  Because of this analogy,  the sequence of approximating functors, $P_1F, P_2F, \dots, P_nF, \dots$, associated to a functor $F$ by Goodwillie's method is referred to as  the {\it Taylor tower} of $F$.   In the decades since its initial development, Goodwillie's theory has been 
further developed and applied by many other mathematicians.   

\pagestyle{myheadings} \markboth{ {K. BAUER, B. JOHNSON, and R. MCCARTHY}}{CROSS EFFECTS AND CALCULUS IN AN UNBASED SETTING}

In an effort to apply the calculus of functors to a more algebraic setting and to better understand the combinatorics underlying Goodwillie's constructions, the second and third  authors of this paper developed a model for a Taylor tower for functors of abelian categories based on a particular collection of cotriples arising from Eilenberg and Mac Lane's cross effect functors (\cite{RB}).  For a functor $F$, the terms in the resulting sequence of approximations, $\{\Gamma_nF\},$ behave in a slightly different way than the $n$-excisive approximations provided by Goodwillie.  Goodwillie's functors $P_nF$ satisfy a  higher-order excision property, while the polynomial functors $\Gamma_nF$ satisfy a kind of higher additivity property.  The cotriple model for calculus has the advantage that the difference between a homotopy functor $F$ and its polynomial approximation $\Gamma_nF$ can be modeled by cotriple homology, which is well understood.  Furthermore,  in good situations the cotriple method recovers information about Goodwillie's functors.  In particular, if $\C$ is the category of based topological spaces, and $F:\C \to {\mathcal C}$ is a homotopy functor taking values in connected spaces that commutes with geometric realization, Andrew Mauer-Oats generalized the cotriple method and showed that $P_nF\simeq \Gamma_nF$ (\cite{Mauer}, \cite{AMO-thesis}).

The cotriple method as established in \cite{RB}  is limited;  it only applies to functors $F:\C \to {\mathcal D}$ where $\C$ is a pointed  category (a category with an object that is both initial and final) with finite coproducts and $\mathcal D$ is an abelian category.   The generalization of the cotriple method to the topological setting in \cite{Mauer} and \cite{AMO-thesis} is similarly limited as it applies to functors of {\it based} spaces.  On the other hand, Goodwillie's construction can  be used for functors whose  source categories  are not  pointed and whose target categories are not abelian,  in particular, functors from the category $Top$ of unbased topological spaces or $Top/Y$ of topological spaces over a fixed space $Y$ to categories of spaces or spectra.

  In \cite{RB}, the essential cotriples are obtained by identifying  adjoint pairs of functors for which the right adjoint is a cross effect functor.    Goodwillie (\cite{G3})  identifies a similar adjunction {\it up to homotopy} in the topological setting of (not necessarily basepointed) spaces and spectra.    Working with  basepointed spaces, Mauer-Oats (\cite{Mauer}) shows directly that diagonals of cross effect functors form the cotriples in which we are interested, but does not identify the adjoint pairs from which these cotriples arise.    This suggests that there should be some 
adjoint pairs of functors that generate the cotriples in the  topological setting, at least when the objects are basepointed.    A key result in the present paper is to show that this is true for fairly general model categories, even in the unpointed case.   As an application of the cotriples that one obtains from these strict adjoint pairs, we generalize the construction of the Taylor tower in \cite{RB} and obtain  analogous results, including  a variation of Mauer-Oats' result that relates the terms in the cotriple Taylor tower to those of Goodwillie's.    We summarize the main results of the paper below.

We work with functors $F:\Cf \rightarrow \D$ where $\C$ and $\D$ are simplicial model categories 
and $\Cf $ is the category that consists of objects $A\rightarrow X\rightarrow B$ factoring a fixed morphism $f:A\rightarrow B$ in $\C$.    In \cite{RB}, we used an adjoint pair involving 
the $n$th cross effect functor to define a cotriple $\perp _{n}$ on the category of functors from a pointed category with finite coproducts to an abelian category.  The cotriple $\perp_n$  yielded the $(n-1)$st term in our Taylor tower.   
The main difficulty in reconstructing the cotriple $\perp_{n}$ in the category of functors from $\Cf$ to $\D$ is that the pair of functors used in \cite{RB} is no longer an adjoint pair, but instead only gives us an  adjunction up to homotopy.  We resolve this issue by factoring through a category of coalgebras associated to a certain cotriple $t$ to obtain a pair of adjunctions whose composition produces the adjunction we need.   This gives us the following result.   The functor $\perp _n$ is the diagonal of the $n$th cross effect.  
\vskip .1 in 
\noindent{\bf Theorem 3.8, Theorem 3.14, Theorem 3.17.}  {\it For each $n\geq 1$,  there is a cotriple $t$ on the category of functors of $n$ variables from $\Cf$ to $\D$, and  an adjoint pair of functors  $(U^+, t^+)$ between this  category and the category of $t$-coalgebras, where the forgetful functor $U^+$ is the left adjoint.   There is a second adjoint pair of functors  $(\Delta ^*, \sqcup _n)$ between the category of functors of $n$ variables from $\Cf$ to $\D$ and the category of functors of a single variable from $\Cf$ to $\D$, with the diagonal functor $\Delta^*$ as the left adjoint.
The composition yields the  adjoint pair $(\Delta ^*\circ U^+, t^+\circ \sqcup _n)$ whose associated 
cotriple is $\perp_n$, defined on the category of functors from $\Cf$ to $\D$.}  

\vskip .1 in 

We can use the cotriples of Theorem 3.17 as the basis for constructing terms in a  Taylor tower for $F:\Cf \rightarrow  \s$, where $\s$ is a suitable category of spectra.   However, the $n$th term in this tower, $\Gamma_nF$,  is not an $n$-excisive 
functor as Goodwillie constructs, but instead a degree $n$ functor.    A functor is degree $n$ if its $(n+1)$st cross effect vanishes, whereas a functor is $n$-excisive if it takes strongly cocartesian $(n+1)$-cubical diagrams of objects (i.e., diagrams whose square faces are all homotopy pushouts) to homotopy pullback diagrams.  We compare the notions of $n$-excisive and degree $n$, proving that being degree $n$ is a weaker condition that can yield
$n$-excisive behavior in certain circumstances. When $F$ commutes with realizations, we prove that the notions of degree $n$ and $n$-excisive coincide, and that the functors $P_nF$ and $\Gamma_nF$ 
agree.   In particular, we have the following results.

\vskip .1 in 
\noindent {\bf Proposition \ref {p:realizations}.}  {\it If  $F:\Cf\rightarrow \s$ commutes with realizations, then $F$ is degree $n$ if and only if $F$ is $n$-excisive.}

\vskip .1 in

We use Proposition \ref{p:realizations} to obtain a  Mauer-Oats style result,  showing that there is a fibration sequence of functors involving 
$P_nF$   and $\perp_{n+1}^{*+1}F$, the simplicial object associated to the cotriple $\perp_{n+1}$ and functor $F$.
\vskip .1 in 

\noindent{\bf Theorem \ref{t:commute}.}\ {\it  Let $F:\Cf \rightarrow \s$ be a functor that commutes with realizations.   Then there is a (co)fibration sequence of functors
\[
|\perp _{n+1}^{*+1}F|\rightarrow F\rightarrow P_nF.
\]}
\vskip .1 in 

As a consequence of Theorem \ref{t:commute}, we obtain 
\vskip .1 in 

\noindent {\bf Corollary 6.8.}  {\it Let $F: \Cf\rightarrow \s$ be a functor that commutes with realizations.   Then $P_nF$ and 
$\Gamma_nF$ are weakly equivalent as functors from $\Cf $ to $\s$.  
}
\vskip .1 in 

When $F$ does not commute with realizations, the role of the initial object, $A$,   in $\Cf$ becomes more critical in comparing the notions of degree $n$ and $n$-excisive, and in comparing $\Gamma_nF$ and $P_nF$.   
We say that a functor is $n$-excisive relative to $A$ if it behaves like an $n$-excisive functor on strongly cocartesian $(n+1)$-cubical diagrams whose initial objects are $A$, and prove 
\vskip .1 in 
\noindent{\bf Proposition \ref{p:degreen}.}  {\it Let $F$ be a functor from $\Cf$ to $\s$.  Let $n\geq 1$ be an integer.  The functor $F$ is degree $n$  if and only if $F$ is $n$-excisive relative to $A$.} 
\vskip .1 in 

We also prove that the results of Theorem 6.5 and Corollary 6.8 apply  when the functors are evaluated at $A$.   In particular, we have
\vskip .1 in 
\noindent {\bf Theorem \ref{t:agreeatA}.}  {\it Let $F:\Cf \rightarrow \s$ where $\Cf$ is the category of objects factoring the morphism $f:A\rightarrow B$.   Then $\Gamma _nF(A)\simeq P_nF(A)$. }  

\vskip .1 in 

An important realization is that Theorem \ref{t:agreeatA} can be rephrased to show that for any $X$, the $n$th term of Goodwillie's tower can be recovered from the $n$th term in {\it some} cotriple Taylor tower, even though the towers do not agree as functors.  To do so, we change our focus to the category of objects over a fixed terminal object $B$.  This focus on the terminal object is exactly the same as the setting in \cite{G3}.  Let ${\C_{/B}}$ be the category of objects in $\C$ over $B$, and let $F$ be a functor from ${\C_{/B}}$ to spectra.  Given any $\beta:X\to B$ in ${\C_{/B}}$, we have a weak equivalence of spectra
\[ P_nF(X) \simeq \Gamma_n^{\beta}F(X)\]
where $\{\Gamma_n^{\beta}F\}$ is the cotriple Taylor tower obtained by restricting $F$ to the category $\C_\beta$.  

%Given a functor $F$ from the category $\C_{/B}$ of objects over $B$ to $\s$ and an object $X$ in $\C_{/B}$, we consider the category of objects factoring the unique map $\beta$ from $X$ to $B$.  In that case, Theorem \ref{t:agreeatA}$ tells us that $\Gamma_n^{\beta} F(X)\simeq P_nF(X)$ where $\Gamma_n^{\beta}$ is 
%
%By varying the map $f$ we are able to establish an equivalence between the $n$th term in a cotriple Taylor tower and the $n$th term in 
%Goodwillie's tower  at all objects.    For an object $X$ in $\Cf$ where $\beta:X\rightarrow B$ is the map from $X$ to the final object, we apply our techniques to the restriction of $F$ to ${\mathcal C}_{\beta}$ to obtain the functor $\Gamma_n^{\beta}F$, and prove
%\vskip .1 in 
%\noindent {\bf Theorem \ref{t:lastthm}.} {\it
%Let $F:\Cf\rightarrow \s$ where $f:A\rightarrow B$ and let $X$ be an object in $\Cf$ with $\beta:X\rightarrow B$.   Then there is a natural weak equivalence
%\[
%\Gamma _n^{\beta}F(X)\simeq P_nF(X).
%\]}
%
%\vskip .1 in 
A key step in proving Theorem \ref{t:agreeatA} is the observation below.   The functor $T_nF$ is the first stage in the sequence of functors that Goodwillie uses to construct $P_nF$.  
\vskip .1 in 
\noindent {\bf Lemma \ref{l:Tn}.} {\it
Let $F:\Cf \rightarrow \s$.   Then
\[\perp _{n+1}F(A)\rightarrow F(A)\rightarrow T_nF(A)
\]
is a fibration sequence in $\s$.}
\vskip .1 in 
As an appendix, we include a generalization of this result due to Rosona Eldred:
\vskip .1 in 
\noindent {\bf Proposition \ref{prop:alldim}.}    {\it For a functor $F:\Cf \rightarrow \s$,  
$|\sk_k( \perp^{*+1}_{n+1} F)(A)| \rightarrow F(A) \rightarrow T_n^{k+1} F(A)$ is a homotopy fiber sequence where $\sk _k$ denotes the $k$-skeleton of the simplicial object $\perp^{*+1}_{n+1}F$.}  

\vskip .1 in

The paper is organized as follows.   
In section 2 we take care of preliminaries:  we define the types of categories in which we will be working, describe the models for and properties of homotopy limits and colimits that we use, and review some basic notions associated to $n$-cubical diagrams of objects in our categories.   
In section 3 we define cross effects for  functors from $\Cf$ to $\D$.   We also identify the composition of adjoint pairs that yields $\perp_n$ as a cotriple.  
In section 4, notions of degree $n$ and $n$-excisive are compared via the intermediate concept of $n$-excisive relative to $A$.  In section 5, the cotriple Taylor tower is defined and various properties are verified for it.  This leads 
to a comparison in section 6 of the cotriple Taylor tower in this context with Goodwillie's tower.
\vskip .1 in 
\noindent {\bf  Acknowledgments:}   The authors were able to meet and work together several 
times during the writing of this paper because of the generosity and hospitality of the following:  the Midwest Topology Network (funded by NSF grant DMS-0844249), the Union College Faculty Research Fund, the Pacific Institute for the Mathematical Sciences, and the mathematics department of the  University of Illinois at Urbana-Champaign.   We thank them for their support.     We thank Rosona Eldred, Agn\`es Beaudry, Mona Merling, and Sarah Yeakel for their assistance in confirming the homotopy limit properties of Lemma \ref{l:holim}.  We also 
thank Tom Goodwillie for the body of work that inspired this paper, and for the understanding 
of the calculus of functors that he has imparted to us over the years.  Finally, we thank the anonymous referee for a very careful and thoughtful review.   The referee's comments and corrections improved this paper substantially.   In particular, the referee identified a critical error in an earlier version of section 3, and his/her suggestion that we prove $t$ is a cotriple directly and use the category of $t$-coalgebras was crucial in resolving this problem.  
\section{Prerequisites }

In this section we describe the context in which we will be working, and  review
some essential concepts that will be used throughout this paper.   The section is divided
into three parts.   The first describes the categories with which we work and provides a summary
of some properties of model categories, simplicial model categories, and categories of simplicial objects  that we need.   The second  covers
necessary facts about homotopy limits and colimits.   The third  discusses $n$-cubical 
diagrams.   

\subsection{The setting.}
We work with functors from $\C$ to $\D$ where $\C$ and $\D$ are suitable model categories.    
By suitable, we mean that ${\sc C}$ and $\D$ should be  simplicial model categories, that $\C$ has a functorial cofibrant replacement functor and that $\D$ has a functorial fibrant replacement functor. For many results, we will also require that $\D$ (but not $\C$) be pointed, i.e., that it has an object that is both initial and final.  Recall that a model category comes equipped with distinguished classes of morphisms -- weak equivalences, cofibrations, and fibrations -- satisfying the  standard  axioms (as found on pp. 1.1-1.2 of \cite{Quillen}, or in several expository accounts, such as Definition 1.3 of \cite{Goerss}). Requiring  that a category $\mathcal D$ be  a simplicial model category  gives us  the following extra structure:
 \begin{itemize}
 \item for every simplicial set $K$ and object $X$ of $\mathcal D$  there is an object $X\otimes K$ in ${\mathcal D}$; and
\item for every simplicial set $K$ and object $Y$ of $\mathcal D$ there is  an exponential object $Y^K$ in $\mathcal D$ defined by the adjunction formula
\[ {\rm hom}_{\mathcal D}(X\otimes K, Y) \cong {\rm hom}_{\mathcal D}(X, Y^K).\]
\item for each pair of objects $X$ and $Y$, there is a simplicial set of morphisms in $\mathcal D$, ${\rm Hom}_{\mathcal D}(X,Y)$, satisfying an additional axiom (see pp. 1.1, 1.2, and 2.2 of \cite{Quillen}).  
\end{itemize}  
For much of this paper we  focus on subcategories of $\C$ determined by morphisms in $\C$.   In particular, for a morphism $f:A\rightarrow B$ in $\C$, the category ${\sc C}_f$ is the category whose objects are pairs of morphisms in $\C$ of the form $A\to X\to B$  that provide a factorization of $f:A\to B$.   We will usually denote  objects of $\Cf$ simply by the object $X$ through which $f$ factors. 
 A morphism in $\Cf$ is a commuting diagram:
\[ \xymatrix{ & X \ar[d]^{g}\ar[dr] \\
A\ar[ur]\ar[r] &Y\ar[r]& B}\]
which we will denote as $g:X\to Y$ when the context is clear.   
 %The map $f:A\to B$ should be thought of as a path from $A$ to $B$ which is used to change the basepoint from $A$ to $B$. 
 The category ${ \sc C}_f$ has  initial object $A=A\to B$ (the first map is the identity and the second map is $f$) and terminal object $A\to B=B$ (the first map is $f$ and the second map is
 the identity).   

The category $\Cf$ inherits 
structure from $\C$.  More specifically,
it is a simplicial  model category whenever $\C$ is   (\cite{Quillen}, \S II.2, Proposition 6). 
The map $g $ in ${\sc C}_f$ is a weak equivalence if the underlying map $g:X\to Y$ is a weak equivalence in ${\sc C}$,
  a cofibration in $\Cf$ if the underlying map in ${\sc C}$ is a cofibration, and a fibration if $g$ is a fibration in $\C$. 
  For convenience, we assume from the outset that all objects of $\Cf$ in this paper are cofibrant.  That is, 
 we assume that an object $X$ of $\Cf$ is a factorization $A\rightarrowtail X \to B$ of $f$ where the map $A\rightarrowtail B$ is  a cofibration in $\C$.
  Since we assume all objects are cofibrant, we abuse notation and simply denote the category of cofibrant objects by $\Cf$. 
    Limits and colimits  in $\Cf$ are also inherited from $\C$, i.e.,  they can be computed in the underlying category $\C$.

We will occasionally pass to the category $s\Cf$ of simplicial objects in $\Cf$.   When doing so, 
we extend the model category structure of $\Cf$ to $s\Cf$ using the Reedy model structure.    Recall that to do so,  one uses a Quillen pair.   The following definition dates to \cite{Quillen} and can be found in many modern references, e.g. \cite{Goerss}.

\begin{defn}  Let $\C$ and $\sc D$ be model categories and 
\[ \xymatrix{F: \C \ar@<.5ex>[r] & \sc{D}:G  \ar@<.5ex>[l] } \]
be an adjoint pair of functors, with $F:\C \to \sc{D}$ the left adjoint.  Then $F$ and $G$ are called a Quillen pair (or Quillen functor) if
\begin{itemize}
\item $F$ preserves cofibrations and weak equivalences between cofibrant objects and
\item $G$ preserves fibrations and weak equivalences between fibrant objects.
\end{itemize}
\end{defn}

We would like to produce an adjoint pair of functors between $\Cf$ and $s\Cf$ which becomes a Quillen pair when we put the correct model structure on $s\Cf$.   The left adjoint of the (potential) Quillen pair is the geometric realization functor.  Recall that $\Delta$ is the category whose objects are ordered sets $[n]=\{0, 1, 2, \dots, n\}$ for $n\geq 0$ and morphisms are order-preserving set maps.  For $m\geq 0$, the standard $m$-simplex is 
$\Delta ^m=\operatorname{hom}_{\Delta}(-,[m])$.   If $\C$ (and hence $\Cf$ ) is a simplicial model category and $X.$ is a simplicial object over $\Cf$, then the object $\Delta^m \otimes X_n$ is well-defined for each $m, n\geq 0$.  
The geometric realization of $X.$ in $s\Cf$, denoted $|X.|$, is the coequalizer of 
\[ \xymatrix{\underset{[m]\to [n]}{\coprod} \Delta^m\otimes X_n \ar@<.5ex>[r] \ar@<-.5ex>[r] &\underset{n}{ \coprod} \Delta^n\otimes X_n }\]
where the first coproduct runs over all possible morphisms from $[m]$ to $[n]$ in ${\Delta}$ and the two arrows correspond to evaluation on $\Delta^m$ and $X_n$, respectively.  Its right adjoint is the singular simplicial set functor. The singular simplicial set of  an object $Y$ is the simplicial object ${Y.}^{\Delta}$ defined by $(Y^{\Delta})_n = Y^{\Delta^n}$, with face and degeneracy maps induced by the ones in $\Delta$.

Placing the Reedy model category structure on $s\Cf$  guarantees that the geometric realization and the singular simplicial set functors are a Quillen pair.   The cofibrations and fibrations for this structure can be readily described  via  latching and matching objects.  The $n$th latching object consists of the degenerate simplices in the degree $n$ part of a simplicial object.  If $X_.\in s\Cf$, then 
\[ L_nX. := (\sk_{n-1}X)_n = \underset {\phi:[n]\twoheadrightarrow [m]}{\rm colim} \phi^* X_m\]
where the colimit is taken over all surjections from $[n]$ in $\Delta$.  Note that the degeneracy maps of $X.$ provide a (natural) map $L_nX \to X_n$.  The $n$th matching object is defined similarly, 
\[ M_nX := \underset{\phi:[m]\hookrightarrow [n]}{\rm lim} \phi^* X_m\]
where  the limit is now taken over injections.  There is a natural map $X_n \to M_nX$ that comes
from the face maps of $X.$.   The next theorem describes the Reedy structure and establishes that it gives us the desired model category structure.

\begin{thm} (\cite{Reedy})  There is a model category structure on $s\Cf$ where a morphism $g:X.\to Y.$ is
\begin{itemize}
\item a weak equivalence if $X_n\to Y_n$ is a weak equivalence in $\Cf$ for all $n\geq 0$;
\item a cofibration if the natural morphism
\[ X_n +_{L_nX} L_n Y:=\operatorname{colim}(X_n\leftarrow L_nX\rightarrow L_nY) \to Y_n\]
is a cofibration in $\Cf$ for all $n\geq 0$; and
\item a fibration if the natural morphism
\[ X_n \to Y_n \times_{M_nY} M_nX:=\operatorname{lim}(M_nX\rightarrow M_nY\leftarrow Y_n) \]
is a fibration in $\Cf$ for all $n\geq 0$.
\end{itemize}
With this model category structure, the geometric realization and the singular simplicial set functors form a Quillen pair.  
\end{thm}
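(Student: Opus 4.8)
The plan is to exhibit $\Delta$ as a Reedy category, transport the model and simplicial structure of $\Cf$ one simplicial degree at a time, and then establish the Quillen pair claim by a separate skeletal-filtration argument. All of the model-categorical input will come from $\Cf$; the work is organizational.

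First I would put the Reedy structure on $\Delta$: declare the degree of $[n]$ to be $n$, let the direct subcategory consist of the injections and the inverse subcategory of the surjections, and use that every morphism of $\Delta$ factors uniquely as a surjection followed by an injection. Because a simplicial object is a functor $\Delta^{\mathrm{op}}\to\Cf$, the direct and inverse roles interchange, so for $X_.\in s\Cf$ the latching object is the colimit $L_nX=\operatorname{colim}_{\phi:[n]\twoheadrightarrow[m]}\phi^*X_m$ over surjections out of $[n]$ and the matching object the limit $M_nX=\operatorname{lim}_{\phi:[m]\hookrightarrow[n]}\phi^*X_m$ over injections into $[n]$, with the natural maps $L_nX\to X_n\to M_nX$ induced by degeneracies and faces, exactly the data in the statement. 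With the three classes of maps defined as stated, closure under retracts and the two-out-of-three axiom for weak equivalences are immediate from the corresponding properties of $\Cf$.

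The heart of the proof is the pair of lifting/factorization axioms, handled by induction on simplicial degree. Given a Reedy cofibration $i:X_.\to Y_.$ and a Reedy fibration $p:Z_.\to W_.$, at least one of which is trivial, suppose a lift has been constructed in all degrees below $n$; the data assembled so far supply a commuting square in $\Cf$ whose left edge is the relative latching map $X_n+_{L_nX}L_nY\to Y_n$ and whose right edge is the relative matching map $Z_n\to W_n\times_{M_nW}M_nZ$, and a lift in degree $n$ is precisely a diagonal filler for this square. By hypothesis one of these maps is a cofibration and the other a trivial fibration (or vice versa) in $\Cf$, so the filler exists, and the fillers for all $n$ assemble into the required map of simplicial objects. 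The functorial factorizations are built by the same induction: at stage $n$ one applies a functorial factorization in $\Cf$ to the relative latching map of the object being factored, which is where the standing assumption that $\C$, hence $\Cf$, has functorial factorizations is used. I expect the only genuinely delicate point to be the bookkeeping here: one must verify that, after factoring in degrees below $n$, the latching and matching objects in degree $n$ are still computed by the expected colimit and limit, so that the induction closes. This is routine but notation-heavy, and it is the step most likely to hide a subtlety.

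Finally, for the Quillen pair, by the adjunction $\operatorname{hom}_{\Cf}(|X_.|,Y)\cong\operatorname{hom}_{s\Cf}(X_.,Y^\Delta)$ it suffices to show that $|-|$ sends Reedy cofibrations to cofibrations and Reedy trivial cofibrations to trivial cofibrations. The realization carries a skeletal filtration with $|X_.|=\operatorname{colim}_n\sk_n|X_.|$ in which the $n$th stage is obtained from the previous one by a pushout of the map
\[
X_n\otimes\partial\Delta^n\;+_{L_nX\otimes\partial\Delta^n}\;L_nX\otimes\Delta^n\longrightarrow X_n\otimes\Delta^n,
\]
which is the pushout-product of the boundary inclusion $\partial\Delta^n\hookrightarrow\Delta^n$ with the relative latching map $L_nX\to X_n$; since $\Cf$ is a simplicial model category this is a cofibration, trivial when $L_nX\to X_n$ is a trivial cofibration. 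Comparing the skeletal filtrations of $|X_.|$ and $|Y_.|$ for a Reedy cofibration $X_.\to Y_.$ then exhibits $|X_.|\to|Y_.|$ as a countable composite of pushouts of such maps applied to the relative latching maps of $X_.\to Y_.$, hence as a cofibration, trivial if $X_.\to Y_.$ is Reedy trivial. Thus $(|-|,(-)^\Delta)$ is a Quillen pair.
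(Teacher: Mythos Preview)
The paper does not prove this theorem at all: it is stated with a citation to Reedy's preprint and used as a black box, so there is no ``paper's own proof'' to compare against. Your outline is the standard argument for the Reedy model structure and is essentially correct in shape.

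One point deserves more care than you give it. In the lifting argument you assert that if the Reedy cofibration $i$ is also a levelwise weak equivalence then the relative latching map $X_n+_{L_nX}L_nY\to Y_n$ is a \emph{trivial} cofibration in $\Cf$ (and dually for trivial fibrations and relative matching maps). This is true, but it is not immediate from the definitions and is itself proved by an induction on degree together with a pushout/retract argument; it is the substantive lemma that makes the Reedy machine run. As written, your sketch assumes exactly the statement that needs to be established at that step. If you fill in that lemma, the rest of your argument---including the skeletal-filtration proof that $|-|$ is left Quillen via the pushout-product axiom in the simplicial model category $\Cf$---goes through.
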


\subsection{Homotopy limits and colimits.}  

We use homotopy limits and colimits to describe certain desirable properties of
our functors.  Homotopy limits and colimits can be defined abstractly as total derived functors or, depending on the category, concretely in terms of specific models.   There are several  models for homotopy limits and colimits, typically involving a simplicial construction coming from the nerve of the underlying diagram category $I$.  We describe the 
two particular models that we use in this paper and the properties of these models that we will need.   For more details, we refer readers to   \cite{Shulman}, which provides a good expository account of homotopy limits and colimits in model categories, \cite{G2},  which establishes many of the properties we use for topological spaces and spectra, \cite{H} for more details about the model we use for homotopy limit  or \cite{BK},  the classical reference.    The homotopy colimit model is that of \cite{Shulman} whereas the homotopy limit model is essentially the one described in \cite{BK}, \cite{G2}, and  \cite{H}.

To construct homotopy colimits, we use the generalized bar construction of \cite{Shulman}.
Let ${\sc X}:{I}\to \Cf$ be a functor.  Then 
\[ \operatorname{hocolim}_{I} {\mathcal X} \simeq | B_{\bullet} ({I}, {\mathcal X}) |\]
where the right hand side is the geometric realization of the simplicial object  $B_{\bullet} ({I}, {\mathcal X})$  with 
\[B_n({I}, {\mathcal X}) = \coprod_{i_0 \to \dots \to i_n} {\mathcal X}(i_0).\]
The coproduct is indexed by the $n$-simplices of $N_{\bullet}({I})$, the nerve of $I$, and the face and degeneracy maps are given by those in $N_{\bullet}({I})$.  

The following properties of $B_{\bullet}({I}, X)$ are used in section 4.
\begin{lem}{\label{l:Bar}}  If ${\mathcal X}$ is objectwise cofibrant, then 
\begin{enumerate}
\item $B_\bullet({I}, {\mathcal X})$ is cofibrant under the Reedy model structure on $s.{\Cf}$. 
\item for any full subcategory ${I}'$ of ${I}$, the  induced map
\[ B_\bullet( {I}', {\mathcal X}) \to B_\bullet( {I}, {\mathcal X})\]
is a cofibration in the Reedy model structure.   
\item if  $N_\bullet({I}) = N_\bullet({I'}) \cup N_\bullet ({I}'')$, then
\[\xymatrix{ B_\bullet ( {I}'\cap {I}'', {\mathcal X}) \ar[r] \ar[d] & B_\bullet ( {I}', {\mathcal X}) \ar[d] \\ B_\bullet( {I}'', {\mathcal X}) \ar[r] & B_\bullet ( {I}, {\mathcal X})}\]
is a pushout diagram.
\end{enumerate}
\end{lem}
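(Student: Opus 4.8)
The plan is to deduce all three parts from a single fact about the bar construction viewed as a functor of its indexing simplicial set. For a simplicial set $K$ over $N_\bullet(I)$ --- in practice a subcomplex of $N_\bullet(I)$ --- let $B^K_\bullet$ be the simplicial object of $\Cf$ with $B^K_n=\coprod_{\sigma\in K_n}\mathcal X(\sigma_0)$, where $\sigma_0$ denotes the first object of the chain in $I$ that $\sigma$ determines via the map $K\to N_\bullet(I)$, and with simplicial operators induced by those of $K$ and $\mathcal X$; taking $K=N_\bullet(I)$ recovers $B_\bullet(I,\mathcal X)$. The fact to establish is: the functor $K\mapsto B^K_\bullet$ is cocontinuous, and when $\mathcal X$ is objectwise cofibrant it carries every monomorphism $K\hookrightarrow K'$ over $N_\bullet(I)$ to a Reedy cofibration $B^K_\bullet\to B^{K'}_\bullet$.

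To prove the fact, I would first check cocontinuity. Because everything lies over $N_\bullet(I)$, the factor $\mathcal X(\sigma_0)$ depends only on the image of $\sigma$ in $N_\bullet(I)$, so for each $n$ the assignment $S\mapsto\coprod_{\sigma\in S}\mathcal X(\sigma_0)$ is a cocontinuous functor from sets over $N_n(I)$ to $\Cf$ (it is a coproduct, over the simplices of $N_n(I)$, of copowers of the fixed objects $\mathcal X(\sigma_0)$, hence a left adjoint); as colimits in $\mathrm{sSet}_{/N_\bullet(I)}$ and in $s\Cf$ are computed degreewise, $K\mapsto B^K_\bullet$ preserves colimits. Combining this with the formula $L_nY=\operatorname{colim}_{\phi:[n]\twoheadrightarrow[m],\,m<n}\phi^*Y_m$, and the fact that surjections in $\Delta$ fix the minimum, so the operators $\phi^*$ act identically on the $\mathcal X$-factors of $B^K_\bullet$, one gets $L_nB^K_\bullet=\coprod_\sigma\mathcal X(\sigma_0)$, the coproduct running over the degenerate $n$-simplices of $K$. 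Now fix a monomorphism $K\hookrightarrow K'$ and an integer $n\ge 0$, and regard $K$ as a subcomplex of $K'$. By the preceding computation, the degree-$n$ relative latching map of $B^K_\bullet\to B^{K'}_\bullet$ is $\coprod_{\sigma\in S}\mathcal X(\sigma_0)\to\coprod_{\sigma\in K'_n}\mathcal X(\sigma_0)$, where $S$ is the pushout of sets of the two inclusions, from (the degenerate $n$-simplices of $K$) into $K_n$ and into (the degenerate $n$-simplices of $K'$). Since $K$ is a subcomplex, an $n$-simplex of $K$ that is degenerate in $K'$ is already degenerate in $K$ (apply the relevant face operator), so inside $K'_n$ those two sets meet exactly in the degenerate $n$-simplices of $K$; hence $S$ is simply their union in $K'_n$, with complement $T$ the nondegenerate $n$-simplices of $K'$ not lying in $K$. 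Thus the relative latching map is a sub-coproduct inclusion, and since coproducts in $\Cf$ are coproducts under the initial object $A$ (which is also the empty coproduct) we have $\coprod_{\sigma\in K'_n}\mathcal X(\sigma_0)=\bigl(\coprod_{\sigma\in S}\mathcal X(\sigma_0)\bigr)\sqcup_A\bigl(\coprod_{\sigma\in T}\mathcal X(\sigma_0)\bigr)$, so the inclusion is the pushout of $A\to\coprod_{\sigma\in T}\mathcal X(\sigma_0)$ along $A\to\coprod_{\sigma\in S}\mathcal X(\sigma_0)$. As each $\mathcal X(\sigma_0)$ is cofibrant and cofibrations are closed under coproducts and pushouts, the relative latching map is a cofibration; this proves the fact.

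The three parts then follow. Part (1) is the case $K=\emptyset$, $K'=N_\bullet(I)$: here $B^\emptyset_\bullet$ is the constant simplicial object at $A$, whose latching objects are $A$, so the relative latching maps of $B^\emptyset_\bullet\to B_\bullet(I,\mathcal X)$ coincide with the absolute latching maps of $B_\bullet(I,\mathcal X)$, which are therefore cofibrations, i.e.\ $B_\bullet(I,\mathcal X)$ is Reedy cofibrant. Part (2) is the case $K=N_\bullet(I')$, $K'=N_\bullet(I)$; the inclusion of nerves is a monomorphism for any subcategory $I'$ of $I$ (fullness is not needed here). Part (3) uses only cocontinuity: the hypothesis says $N_\bullet(I)$ is the union of the subcomplexes $N_\bullet(I')$ and $N_\bullet(I'')$, hence $N_\bullet(I)=N_\bullet(I')\sqcup_{N_\bullet(I')\cap N_\bullet(I'')}N_\bullet(I'')$ in simplicial sets; a chain of composable arrows of $I$ lies in both $N_\bullet(I')$ and $N_\bullet(I'')$ precisely when all of its objects and arrows lie in $I'\cap I''$, so $N_\bullet(I')\cap N_\bullet(I'')=N_\bullet(I'\cap I'')$; applying the cocontinuous functor $B^{(-)}_\bullet$ to this pushout square of simplicial sets yields the asserted pushout square in $s\Cf$.

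The one point that needs care --- more bookkeeping than genuine obstacle --- is the identification, in the second paragraph, of the relative latching map of $B^{(-)}_\bullet$ with a sub-coproduct inclusion: this rests on the cocontinuity of $B^{(-)}_\bullet$ in the index simplicial set and its compatibility with latching objects, together with keeping in mind that in $\Cf$ the empty coproduct, and hence the object against which cofibrancy is tested, is the initial object $A$ rather than a zero object, so that the familiar observation ``a sub-coproduct inclusion of cofibrant objects is a cofibration'' is applied with $A$ in the role of $\emptyset$. Once these are settled, everything is formal.
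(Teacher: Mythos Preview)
Your proof is correct, and its core computation --- identifying the relative latching map as a sub-coproduct inclusion indexed by the nondegenerate simplices not already present, then recognizing this inclusion as a pushout of a coproduct of the cofibrations $A\to\mathcal X(\sigma_0)$ --- is exactly the paper's argument for part (2). The difference is one of packaging: the paper treats the three parts separately (citing Shulman for (1), doing the latching computation directly for (2), and chasing simplices for (3)), whereas you route everything through the single cocontinuous functor $K\mapsto B^K_\bullet$ on simplicial sets over $N_\bullet(I)$, so that (1) becomes the special case $K=\emptyset$, (2) the case $K=N_\bullet(I')$, and (3) an instance of preservation of pushouts. This is a cleaner and mildly more general framing --- it applies to arbitrary simplicial subsets of the nerve, and, as you observe, fullness of $I'$ is not actually used in (2) --- but the substantive content is the same as the paper's.
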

\begin{proof} The first statement (1) is Lemma 9.2 of \cite{Shulman}.  

To prove (2), we must show that the map  \[ j:B_n( {I}', {\mathcal X})\coprod_{L_nB_\bullet ( {I}', {\mathcal X})} L_nB_\bullet( {I}, {\mathcal X})\to B_n( {I}, {\mathcal X})\] is a cofibration in $\Cf$.   Note that $L_nB_\bullet(I, {\mathcal X})$ is the coproduct
\[ \coprod_{i_0\to \cdots \to i_n} {\mathcal X}(i_0)\]
indexed by chains of maps $i_0\to \cdots \to i_n$ in $I$ for which some $i_k\to i_{k+1}$ is the identity map.  The structure maps $L_nB_\bullet(I, {\mathcal X})\to  B_n( {I}, {\mathcal X})$ and $ B_n( {I}', {\mathcal X})\to  B_n( {I}, {\mathcal X})$ are induced by inclusion maps:  the first is the inclusion of the degenerate elements while the second is induced by the forgetful functor from $I'$ to $I$.
These inclusions mean that the map $j$ is constructed from maps that are cofibrations.  That is, each summand ${\mathcal X(i_0)}$ in \[ B_n( {I}', {\mathcal X})\coprod_{L_nB_\bullet ( {I}', {\mathcal X})} L_nB_\bullet( {I}, {\mathcal X})\] is indexed by a chain of maps $i_0\to \cdots \to i_n$ either in $I'$, which is a subcategory of $I$, or else in $I$ itself (coming from a degenerate chain of maps in $I$), or both.  
Thus, ${\mathcal X}(i_0)$ also represents a summand of $B_n(I, {\mathcal X})$.  To complete the construction of $j$, we take the coproduct of identity maps, one for each summand ${\mathcal X}(i_0)$ of  $B_n( {I}', {\mathcal X})\coprod_{L_nB_\bullet ( {I}', {\mathcal X})} L_nB_\bullet( {I}, {\mathcal X})$ together with the coproduct over the initial object $A$ of maps $A\to {\mathcal X}(i_0)$ for each summand ${\mathcal X}(i_0)$ of $B_n(I, {\mathcal X})$ indexed by a chain of maps $i_0\to \cdots \to i_n$ which is neither degenerate nor contained in the subcategory $I'$.   Since ${\mathcal X}(i_0)$ is cofibrant, each of the maps $A\to {\mathcal X}(i_0)$ is a cofibration and the identity map is always a cofibration.   The coproduct of cofibrations is again a cofibration so it follows that $j$ is a cofibration.

%Since the degenerate objects in $L_nB_\bullet( J, {\mathcal X})$ are just coproducts $\coprod {\mathcal X}(j_0)$ indexed by chains $j_0\to \cdots \to j_n$ where at least one $j_i\to j_{i+1}$ is the identity map, and since $B_n(J, {\mathcal X})$ is similarly composed of coproducts, the map $B_n( {I}', {\mathcal X})+_{L_nB_\bullet ( {I}', {\mathcal X})} L_nB_\bullet( {I}, {\mathcal X})$ to $B_n( {I}, {\mathcal X})$ is a coproduct of identity maps and maps $A\to {\mathcal X}(i)$.  The conclusion follows by noting that the coproduct of cofibrations is again a cofibration.
%Consider the diagram
%\[ \xymatrix{ L_nB_\bullet( {I}', {\mathcal X}) \ar[d]_{\alpha} \ar[r] & L_n B_\bullet( {I}, {\mathcal X})\ar[d]^{\beta} \ar@/^/@<2ex>[ddr]^{\gamma} \\ 
%B_n( {I}', {\mathcal X})\ar[r] \ar[drr] & \textstyle{B_n( {I}', {\mathcal X})}\scriptstyle{\underset{L_nB_\bullet ( {I}', {\mathcal X})}{+}}\textstyle{ L_nB_\bullet( {I}, {\mathcal X})} \ar[dr] \\ && B_n( {I}, {\mathcal X}).}\]
%The initial object in $s\Cf$ 
%is the constant simplicial object $A.$ built from the initial object $A$ of $\Cf$.   By (1) we know that $B_{\bullet}(I', \mathcal X)$ is cofibrant, and hence $A.\to B_{\bullet}(I', \mathcal X)$ is a cofibration.   By Theorem 2.2, it follows that
%the map $\alpha$ is a cofibration.  Similarly, we see that $\gamma$ is a cofibration.   Since the square is a pushout square, this forces $\beta$ to be a cofibration as well.  The desired map is now a cofibration by the 2-out-of-3 property.  

For (3), let $F(i_0)$ be a simplex of $B_\bullet ( {I}, {\mathcal X})$ indexed by $i_0\to \cdots \to i_n$.  Since $i_0\to \cdots \to i_n$ is a simplex of $N_\bullet({I})$, the hypothesis implies that $i_0\to \cdots \to i_n$ is a chain of morphisms in either ${I}'$ or ${I}''$.  Thus the simplex $F(i_0)$ came from one in $B_\bullet ( {I}', {\mathcal X})$ or $B_\bullet ( {I}'', {\mathcal X})$.  Hence the diagram of (3) is a pushout.  
\end{proof}

\bigskip
\noindent
{\bf Convention.} Given a simplicial model category $\M$, the simplicial realization from the category of simplicial objects in $\M$ to $\M$ preserves weak equivalences between cofibrant simplicial objects. 
If $\M$ has functorial cofibrant replacements, then one can define $\hocolim_{\Delta^{op}}$ as a weak-equivalence-preserving functor from simplicial objects in $\M$ to $\M$ by composing the cofibrant replacement with a model for $\hocolim$ such as the one in [10, 18.1.2]. 
%If $\M$ has functorial cofibrant replacements, then one can define 
%$\text{hocolim}_{\Delta^{op}}$ from simplicial objects in $\M$ to $\M$ which always preserves weak equivalences. 
The functor 
$\text{hocolim}_{\Delta^{op}}$ is sometimes called the ``fat'' realization, and written $\|X\| = \text{hocolim}_{\Delta^{op}}X$. There is a natural transformation $\|X\|\rightarrow |X|$ which is a weak equivalence when $X$ is a cofibrant simplicial object in $\M$. We will be using the fat realization throughout. 
We will follow the convention (which is a slight abuse of notation) and simply write $|X|$ for the fat realization (as was done in [16]). 
We note that by Lemma 2.3(1), when $\cal X$ is objectwise cofibrant, we can apply the fat realization to the diagram in (3) and obtain a cocartesian diagram.

\bigskip

The model for homotopy colimits described above can be dualized to produce a model for homotopy limits.   Instead, we construct homotopy limits in the following fashion.   
\begin{defn}  Let $I$ be a small category and $F:I\rightarrow \C$ be an $I$-diagram in $\C$.  
For each object $i$ in our indexing category $I$, let ${ I\downarrow i}$ denote the category of elements in $I$ over $i$;  this category has objects $j\to i$ and morphisms given by commuting triangles.  Let  $N_{\bullet}(I\downarrow i)$ be the nerve of the category $I\downarrow i$.  Then 
\begin{equation}\label{e:holim} \operatorname{holim}_IF = \operatorname{hom}^I(N_{\bullet}(I\downarrow -), F(-))\end{equation}
where for a functor $G$ from $I$ to $s.set$ (simplicial sets) and a functor $H:I\to \C$, the construction  $\operatorname{hom}^I(G, H)$  is the equalizer of the two obvious maps
\[ \xymatrix{\underset{i\in I}{\prod}  H(i)^{G(i)} \ar@<.5ex>[r] \ar@<-.5ex>[r] &\underset{i\to j}{\prod} H(j)^{G(i)}.}\]
\end{defn}
We make use of the following properties of homotopy limits.   The first four are essential for  the key results in section 3.   We include their proofs in Appendix A.  The last property follows easily from the definition of homotopy limit above.  

\begin{lem}{\label{l:holim}} Let $I$ and $J$ be small categories.
\begin{enumerate}

\item If $F:I\times J\to {\sc C}$, then \[\operatorname{holim}_{I\times J}F \cong \operatorname{holim}_I(\operatorname{holim}_J F).\]
\item If $\alpha:J\to I$  and $F:I\to {\C}$, then there is a morphism $\operatorname{holim}_I F \to \operatorname{holim}_J F\circ \alpha$.  
\item If $T$ is a constant $I$-diagram with $T(C)=T$ and $T(f)={\text id}_T$ where $T$ is a terminal object of $\C$, then 
\[\holim_I T\cong T.  
\]
\item If ${\mathscr I}$ is the trivial diagram on $i$, then for any ${\mathscr I}$-diagram $X$, 
\[
\holim _{\mathscr I}X\cong X(i). 
\]
\item If $F,G:I\to {\sc C}$  and $\eta:F\to G$ is a natural transformation (also called a map of $I$-diagrams), then $\eta$ induces a map from $\operatorname{holim}_IF \to \operatorname{holim}_IG$.

\end{enumerate}
\end{lem}

As noted on page 379 of \cite{H}, the definition of homotopy limit above is homotopy invariant only when the diagram is objectwise fibrant.    When the $I$-diagram $F$ is objectwise fibrant, $\holim_IF$ is fibrant by Corollary 18.5.2(2) of \cite{H}.    For this reason, we will use functors that take values in fibrant objects.

The first property of Lemma  \ref{l:holim} is often referred to by the slogan ``homotopy limits commute,'' since it also implies that $\operatorname{holim}_I \operatorname{holim_J} = \operatorname{holim_J}\operatorname{holim_I}$.  A special case of this property tells us that 
homotopy fibers and homotopy limits commute, where homotopy fibers are defined as follows.
\begin{defn}Let $\sc{D}$ be a pointed model category with  initial/final object  $\star$, and let $g:X\to Y$ be a morphism in $\sc{D}$.  Then the homotopy fiber of $g$, denoted $\operatorname{hofiber} g$, is the homotopy limit of 
\[ \xymatrix{ & {\star} \ar[d] \\ X \ar[r]^g & Y.}\]
\end{defn}
Note that in this paper, all homotopy fibers are computed in the target category $\sc{D}$.

\subsection{Cubical diagrams.}  Later in this paper, we examine two fundamental concepts, degree $n$ and $n$-excisive, each of  which is used to define a notion of degree $n$ polynomial functor.  Both concepts are determined by the behavior of a functor when applied to certain types of diagrams in $\Cf$ .  

\begin{defn} Let ${\bf n} = \{ 1, 2, \ldots , n\}$ and let $\P({\bf n})$ be the power set of ${\bf n}$ treated as a category whose objects are the subsets of ${\bf n}$ and morphisms are the set inclusions.   An $n$-cubical diagram (or $n$-cube) in a category $\sc D$ is a functor  from  $\P({\bf n})$ to $\sc D$.  
\end{defn}

One can picture an  $n$-cubical diagram as being shaped like a cube of dimension $n$.  For this reason, we say that the object $\chi(S)\in \C$ for any fixed $S\subset {\bf n}$ is a {\it vertex} of the $n$-cube $\chi$.  Similarly, the image of the inclusion $S\subset S\cup \{i\}$ ($1\leq i\leq n$, $i\notin S$) under $\chi$ is called an {\it edge}, and for $i,j\notin S$, the image of 
\[ \xymatrix{S\ar[r]\ar[d]&S\cup \{i\}\ar[d]\\S\cup\{j\}\ar[r]&S\cup\{i,j\}}
\]
under $\chi$ is a $2$-{\it face}.
  To a functor of $n$ variables from ${\Cf}$, we associate two special $n$-cubical diagrams.  

\begin{ex} \label{e:chi} In the category $\Cf$, every object $X$ is equipped with a map $\beta_X$ to the terminal object $B$.  Let $H:\Cf^{\times n} \to {\sc D}$ be a functor of $n$ variables from $\Cf$ to an arbitrary category $\sc D$.  For an $n$-tuple of objects ${\bf X}=(X_1, \ldots , X_n)$ in $\Cf$, the $n$-cube $H^{\bf X}_B$ in $\sc D$ is defined by
\[ H^{\bf X}_B(S) = H(X^1_B(S), \ldots , X^n_B(S))\]
where 
\[ X_B^i(S)= \begin{cases} X_i & \text{if\ \ } i\notin S \\ B & \text{if \ \ } i \in S.\end{cases}\]  The image of the inclusion map $S\subset T$ under $H^{\bf X}_B$ is induced by the maps $\beta_{X_i}$.

We will make use of an $n$-cube of this type obtained by using the functor $\sqcup^n:{\Cfn n}\to \Cf$ with
\[ \sqcup^n(X_1, \ldots , X_n) = X_1\coprod_A \ldots \coprod_A X_n .\]
In particular, the $2$-cube $(\sqcup^2)^{(X,Y)}_B$ is the square diagram
\[ \xymatrix{ X\underset{A}{\coprod} Y \ar[r]\ar[d] & B\underset{A}{\coprod}Y \ar[d] \\ X\underset{A}{\coprod} B \ar[r] & B\underset{A}{\coprod} B.}\]
\end{ex}

We will also make use of an $n$-cubical diagram that exploits the fact that every object
in ${\Cf}$ is equipped with a map from $A$.   Recall that we  assume that the map $A\to X$ is a cofibration. 
\begin{ex} \label{e:Upsilon} For an object $X$ in  $\Cf$, let $\alpha_X:A\to X$ denote the cofibration from the initial object to $X$.  Let $H:\Cf^{\times n} \to {\sc D}$ be a functor of $n$ variables from $\Cf$ to an arbitrary category $\sc D$.  Then for an $n$-tuple of objects ${\bf X}=(X_1, \ldots , X_n)$ in $\Cf$, the $n$-cube  $H^A_{\bf X}: P({\bf n})\rightarrow \sc D$ is defined by
\[ H^A_{\bf X}(S) = H(X_1^A(S), \ldots , X_n^A(S))\]
where \[X^A_i(S)=\begin{cases} A &\text {if\ \ } i\notin S\\ X_i &\text{if\ \ } i\in S.\end{cases}\] The image of the inclusion map $S\subset T$ under $H^{A}_{\bf X}$ is induced by the maps $\alpha_{X_i}$.

Again, the functor $\sqcup^n$ produces useful examples.  The 
$2$-cube $(\sqcup^2)^A_{(X,Y)}$ is the square diagram
\[ \xymatrix{A\underset{A}{\coprod} A \ar[r]%^{1\coprod_A \alpha_Y} 
\ar[d]%_{\alpha_X\coprod_A 1} 
& A\underset{A}{\coprod} Y \ar[d]%^{\alpha_X \coprod_A 1} 
\\ 
X\underset{A}{\coprod} A \ar[r]%_{1\coprod_A \alpha_Y} 
& X\underset{A}{\coprod} Y.}\]
Since $A\coprod_A A = A$, $A\coprod_A Y = Y$ and $X\coprod_A A = X$, the diagram is   the diagram which defines the coproduct in $\Cf$.  In particular, it is homotopy cocartesian, as defined below.
\end{ex}

More generally, we are interested in $n$-cubes that are pullbacks or pushouts up to homotopy.
In particular, we use the notions of  homotopy cartesian and cocartesian diagrams introduced 
in \cite{G2}.
To define these terms, we let $\P_0({\bf n})$ be the full subcategory of $\P({\bf n})$ determined by the 
non-empty subsets of ${\bf n}$ and $\P_1({\bf n})$ be the full subcategory of $\P({\bf n})$ determined by 
the subsets other than ${\bf n}$ itself.  
\begin{defn} \label{d:cartesian}Let $\chi $ be an $n$-cubical diagram in a model category ${\sc D}$.  \begin{itemize}

\item There are natural maps from the initial vertex $\chi ({\emptyset})$ to $ {\rm holim}_{S\in \P_0({\bf n})}\chi (S)$ and ${\rm hocolim}_{S\in \P_1({\bf n})} \chi(S)$ to the terminal vertex $ \chi ({\bf n})$ determined by the compositions
\[\chi(\emptyset)=\operatorname{lim}(\chi)\rightarrow \holim_{\P({\bf n})}\chi\rightarrow \holim_{\P_0({\bf n})}\chi,
\]
and
\[
\hocolim_{\P_1({\bf n})}\chi\rightarrow \hocolim_{\P({\bf n})}\chi\rightarrow \operatorname{colim}_{\P({\bf n})}\chi=\chi({\bf n}),
\]
respectively.

\item   We say that $\chi$  is {homotopy cartesian} if the map from the initial vertex $\chi ({\emptyset})$ to $ {\rm holim}_{S\in \P_0({\bf n})}\chi (S)$ is a weak equivalence. 
\item   We say that $\chi$ is {homotopy cocartesian} if the map from\\  ${\rm hocolim}_{S\in \P_1({\bf n})} \chi(S)$ to the terminal vertex $ \chi ({\bf n})$ is a weak equivalence.
\item    We say that $\chi$ is {strongly homotopy cocartesian} if each of its $2$-faces is homotopy cocartesian.  
\end{itemize}
\end{defn} 
Following \cite{G2} we generally omit the term ``homotopy" when speaking of these types of diagrams.

%%%%%%%%%%%%%%%%%%%%%%%%%%%%%%%%%%%%%%%%%%%%%%%%%
\section{Cross effects}\label{s:xfx}
%%%%%%%%%%%%%%%%%%%%%%%%%%%%%%%%%%%%%%%%%%%%%%%%

The cross effects for functors of abelian categories were introduced by Eilenberg and Mac Lane in \cite{EM}.   Given a functor $F$ between two abelian categories and a positive integer $n$,
Eilenberg and Mac Lane defined a functor of $n$ variables, $cr_{n}F$, that measures in some 
sense the extent to which $F$ fails to be additive.    Drawing on their ideas, the second and third
authors of this paper used the cross effects to define the degree of a functor and construct degree $n$ polynomial approximations to functors from a pointed category 
to an abelian category in \cite{RB}.  The construction of the polynomial approximations depended on showing that 
the cross effect functors were parts of adjoint pairs and as such could be used to produce cotriples and 
cotriple resolutions that readily yielded the desired approximations.   

In the present work,
we extend these ideas to functors whose domain category is not pointed and whose  target is not necessarily abelian.
While some of the results of \cite{RB} carry through to this 
new context quite easily, others do not.   In particular, identifying the adjoint pair that yields
the desired cotriple requires a different approach.
We use this section to adapt cross effects and the notion of the degree of a functor to a setting where the domain is  of the form $\Cf$ that we introduced  in Section 2.1. 
 We also identify  adjoint 
pairs and cotriples associated to cross effects that we need.
    
Throughout this section we work with  functors from the category $\Cf$  to  the target category $\D$ where $\C_f$ and $\D$ are both simplicial model categories as described in Section 2.1.  In our constructions, we need to use the fact that the target category (but not the domain category) is pointed.  Thus we assume that $\D$ is pointed and denote the initial/final object by $\star$.  To ensure that our homotopy limit constructions behave nicely with respect to weak equivalences, we further assume that all functors take fibrant values in $\D$.  

The cross effect functors will be  functors of functors.  For this reason, we will often need to consider the ``category" of functors from one category to another.  Strictly speaking, we can not do so since these categories rarely have sets of morphisms (which are defined by natural transformations).  In practice, this can often be resolved.  The functors from $\C$ to $\D$ will form a category if $\C$ is skeletally-small  or if we are careful to fix a suitable universe of sets in which to work (as in \cite{G3}).  For the remainder of this paper, we assume that we are in a situation in which such categories of functors make sense.   We use $\Fun(\Cf, \D)$ to denote the category of functors from $\Cf$ to $\D$ that preserve weak equivalences, and $\Fun(\Cfn n, \D)$ to denote the category of functors of $n$ variables from $\Cf$ to $\D$ that preserve weak equivalences.

\subsection{Iterated Fibers and Cross Effects}

Our first step is to define  cross effects for our context.   The definition of the $n$th cross effect functor involves the iterated fibers of $n$-cubical diagrams associated with the $n$-fold coproduct functor $\sqcup^n$.   To better understand the definition, consider a commuting square of objects in $\D$:
\begin{equation}\label{e:1} \xymatrix{A\ar[r]\ar[d]&B\ar[d]\\
C\ar[r]&D.}
\end{equation}
If we take homotopy fibers vertically, we obtain a map of homotopy fibers:
\[
\hofib\left ( \vcenter{\xymatrix{A\ar[d]\\C}}\right )\rightarrow \hofib\left(\vcenter{\xymatrix{B\ar[d]\\D}}    \right).
\]
We can take  the homotopy fiber of this map to obtain an object, $X$, in $\D$, that we call the {\it iterated fiber} of the diagram:
\[
X=\hfib\left(\hfib \left (\vcenter{\xymatrix{A\ar[d]\\C}}\right)\rightarrow \hfib \left (\vcenter{\xymatrix{B\ar[d]\\D}}\right)    \right).
\] 
Recall that the homotopy fiber of a map such as $A\rightarrow C$ is defined to be the homotopy  limit of the diagram
\[
A\rightarrow C\leftarrow \star.
\]
Using this, we see that $X$ is the homotopy  limit of 
\[
\holim \left (\vcenter{\xymatrix{A\ar[d]\\C\\\star\ar[u]}}\right)\rightarrow \holim \left (\vcenter{\xymatrix{B\ar[d]\\D\\ \star\ar[u]}}\right) \leftarrow \star.  
\]
Using properties (1) and (3) of Lemma \ref{l:holim}, we see that $X$ is the homotopy  limit of 
\begin{equation}\label{e:2}
\xymatrix{A\ar[r]\ar[d]&B\ar[d]&\star\ar[l]\ar[d]\\
C\ar[r]&D&\star\ar[l]\\
\star\ar[u]\ar[r]&\star\ar[u]&\star .\ar[l]\ar[u]}
\end{equation}
This is also isomorphic to 
\[
\hfib\left(\vcenter{ \xymatrix{\hfib(A\rightarrow B)\ar[d]\\ \hfib(C\rightarrow D)} }\right)
\]
and, as a consequence, we see that 
\[\hfib\left(\vcenter{ \xymatrix{\hfib(A\rightarrow B)\ar[d]\\ \hfib(C\rightarrow D)} }\right)\cong \hfib\left(\hfib \left (\vcenter{\xymatrix{A\ar[d]\\C}}\right)\rightarrow \hfib \left (\vcenter{\xymatrix{B\ar[d]\\D}}\right)    \right).
  \]  
  In other words, the order of the directions in which we take fibers does not matter when defining the iterated fiber of $X$.   We generalize this to define the iterated fiber of an $n$-cube.  To do so we define a new diagram associated to an $n$-cubical diagram.
 \begin{defn}\label{d:assoccube}
 Let ${\sc X}$ be an $n$-cubical diagram in $\D$.  The associated $({\P}_0({\bf 2}))^{\times n}$-diagram ${\sc X}^*$ assigns to the $n$-tuple of sets $(S_1, \dots, S_n)$ in $\P_0({\bf 2})$ the object 
 \[{\sc X}^*(S_1,\dots, S_n)=\begin{cases}  \star &\text{if\ } S_i=\{2\} \text{\ for\  at\  least\  one\  $i$,}\\
 {\sc X}(\{i\ |\ S_i=\{1,2\}\})  &\text{otherwise}.   
  \end{cases}
 \]
 ${\sc X}^*$ takes  a morphism of sets in $(\P_0({\bf 2}))^{\times n}$, $\phi: (S_1, \dots, S_n)\rightarrow (T_1, \dots, T_n)$, to the map ${\sc X}^*(\phi)$ defined as follows:
 \[
 {\sc X}^*(\phi)=\begin{cases} {\sc X} (\{i\ |\ S_i=\{1,2\}\}\subseteq \{j\ |\ T_j=\{1,2\}\})&S_i, T_j\neq\{2\}, 1\leq i,j\leq n,\\
 {\sc X}^*(S_1, S_2, \dots, S_n)\rightarrow \star&T_j=\{2\}\ \text {for\ some\ }j,\\
 \star\rightarrow {\sc X}^*(T_1, \dots, T_n)&S_i=\{2\}\text{\ for\ some\ }i,\\
 \end{cases}
 \]
 where the second and third maps are uniquely determined by the fact that $\star$ is the initial/final object in $\D$.  
 \end{defn}
 We note that if ${\sc X}$ is the diagram in (\ref{e:1}), then ${\sc X}^*$ is the diagram in (\ref{e:2}).
\begin{defn}
Let ${\sc X}$ be an $n$-cubical diagram in $\D$.  The iterated (homotopy) fiber of ${\sc X}$, denoted $\ifiber{\sc X}$, is given by 
\[\ifiber{\sc X}=\underset{(\P_0({\bf 2}))^{\times n}}{\holim}{\sc X^*}.
\]
\end{defn}
As demonstrated in the case of a $2$-cube, one can think of the iterated fiber of an $n$-cube as the object constructed by first taking fibers in one direction, $U\rightarrow U\cup \{i\}$, in the $n$-cube, then taking fibers of the resulting fibers in another direction, and continuing until one has exhausted all independent directions in the cube.  
As an immediate consequence we have the following.  
\begin{lem}\label{l:iteratedfiber}  
Let ${\sc X}$ be an $n$-cube and let ${\sc X}_1$ and ${\sc X}_2$ be  $(n-1)$-cubes with ${\sc X}={\sc X}_1\rightarrow {\sc X}_2$.   Then 
\[
\ifiber{\sc X}\cong\hofib\left (\ifiber({\sc X}_1)\rightarrow \ifiber({\sc X}_2)\right).
\]
\end{lem}
We use iterated fibers  to define the $n$th cross effect of a functor.
\begin{defn}\label{d:crn}   Let $F: \Cf \rightarrow \D$ be a functor.   The $n$th cross effect of $F$ is the functor of $n$ variables $cr_nF:\Cfn n\rightarrow \D$ that for an $n$-tuple ${\bf X}=(X_1, X_2,\dots, X_n)$ is the iterated fiber of the $n$-cube $F((\sqcup^n)^{\bf X}_B)$ that results from applying the functor $F$ to the $n$-cubical diagram $(\sqcup^n) ^{\bf X}_B$ (as defined in Example \ref{e:chi}).   When we precompose $cr_n$ with the diagonal functor $\Delta:X\mapsto (X, X, \dots, X)$, the result is a functor from $\Cf$ to $\D$.   We use $\perp_n$ to denote this composition, that is, $\perp _nF(X)=cr_nF(X, X, \dots, X)$.  
  \end{defn}
  When $\Cf$ is pointed and $\D$ is abelian, these definitions agree with those of \cite{RB}.  

  \begin{ex}
  For $n=2$, $cr_2F(X_1, X_2)$ is the iterated fiber of
  \[\xymatrix{F(X_1\underset{A}{\amalg}X_2)\ar[r]\ar[d]&F(B\underset{A}{\amalg}X_2)\ar[d]\\
 F(X_1\underset{A}{\amalg}B)\ar[r]&F(B\underset{A}{\amalg}B),
  }
  \]
  and $\perp_2F(X)$ is the iterated fiber of
   \[\xymatrix{F(X\underset{A}{\amalg}X)\ar[r]\ar[d]&F(B\underset{A}{\amalg}X)\ar[d]\\
 F(X\underset{A}{\amalg}B)\ar[r]&F(B\underset{A}{\amalg}B).
  }
  \]
    \end{ex}
  As discussed at the beginning of this section, the cross effect functors play an essential role in the construction of Taylor towers in \cite{RB}.  When ${\sc C}$ is a pointed category and ${\sc A}$ is an abelian category, it is straightforward
to show that $(\Delta^*, cr_n)$ is an adjoint pair of functors.   (Here $\Delta ^*$ denotes 
precomposition with the diagonal functor.)   As a consequence, $\perp_n=\Delta^*\circ cr_n$ 
is a cotriple -- this was used to construct the $(n-1)st$ term in the Taylor tower of a functor $F$.  

In trying to replicate this process for functors from $\Cf$ to $\D$, one encounters an obstruction
almost immediately.   The functors $\Delta ^*$ and $cr_n$ no longer form a strict adjoint pair.   Goodwillie has shown that they form an adjoint pair up to weak equivalence in a topological setting \cite{G3}, but something more is need to show that $\perp_n$ is a cotriple.   Our solution is to recognize that $\perp_n$ arises naturally from a pair of adjunctions.  The first involves the functor $t$ defined as follows.     

\begin{defn}
  For a functor $H:\Cfn n\rightarrow \D$, the functor $tH:\Cfn n\rightarrow \D$ is defined for an $n$-tuple ${\bf X}=(X_1, X_2, \dots, X_n)$ of objects in $\Cf$ by
  \[tH(X_1, \dots, X_n)=\ifiber(H^{\bf X}_B).
  \]
  \end{defn}
  
  \begin{rem}\label{e:nfld}
  Note that for $H={\sqcup}^n$, the $n$-fold coproduct functor, a functor $F:\Cf\rightarrow \D$ and an $n$-tuple $(X_1, \dots, X_n)$ of objects in $\Cf$, 
  \[cr_nF(X_1, \dots, X_n)=t(F\circ\sqcup^n)(X_1, \dots, X_n).  \]
  \end{rem}

In the next two subsections, we show that $t$ is a cotriple on $\Fun(\Cfn n, \D)$ and identify an associated adjunction.   The other adjoint pair involves $\sqcup^n$ and the diagonal functor.

\subsection{$t$ is a cotriple}

Recall that a cotriple on a category ${\sc A}$ consists of a functor $\perp:{\sc A}\rightarrow {\sc A}$ together with natural transformations $\epsilon: \perp\rightarrow {\rm id}_{\sc A}$ and $\delta: \perp\rightarrow \perp\perp$ such that the following diagrams commute:
  \[
  \xymatrix{\perp \ar[r]^{\delta}\ar[d]_{\delta}&\perp\perp\ar[d]^{\delta_{\perp}}\\
  \perp\perp\ar[r]_{\perp\delta}&\perp\perp\perp}\ \ \ \ \ \ \ \ 
   \xymatrix{&\perp\ar[dl]_{=}\ar[d]_{\delta}\ar[dr]^{=}&\\
  \perp&\perp\perp \ar[l]^{\perp\epsilon}\ar[r]_{\epsilon_{\perp}}&\perp.}
  \] 

Our first goal is to prove the following.
  \begin{thm}\label{t:tiscotriple}
  There are natural transformations $\gamma: t\rightarrow {\rm id}_{\Fun(\Cfn n,\D)}$ and $+: t\rightarrow tt$ such that $(t,+, \gamma)$ is a cotriple on $\Fun(\Cfn n,\D)$.  
  \end{thm}
  To prove this theorem we begin by  defining the natural transformations $\gamma: t\rightarrow {\rm id}_{\Fun(\Cfn n, \D)}$ and $+: t\rightarrow tt$.   Both of these natural transformations will be determined by applying property (2) of Lemma \ref{l:holim} to  maps of the indexing sets used in the homotopy inverse limits that define $tH$ and $ttH$.  We begin with $\gamma$.
  
  \begin{defn}  Let $H:\Cfn n\rightarrow \D$ and ${\bf X}=(X_1, \dots, X_n)$ be an object in $\Cfn n$.
  Consider the inclusion, $g: \{(\{1\},\{1\},\dots,\{1\})\}\rightarrow (\P_0({\bf 2}))^{\times n}$.   By properties (2) and (4) of Lemma  \ref{l:holim}, this induces a natural transformation 
  \[
\xymatrix{\underset {(\P_0({\bf 2}))^{\times n}}{\holim} (H^{\bf X}_B)^*\ar[r]&   \underset {\{(\{1\},\{1\},\dots,\{1\})\}}{\holim}((H^{\bf X}_B)^*\circ g)\cong H(X_1, \dots, X_n) }
  \]
  which is natural in both $H$ and ${\bf X}$.  This gives us the natural transformation $\gamma:t\rightarrow {\rm id}_{\Fun(\Cfn n,\D)}$.
  \end{defn}
  
  The definition of the second natural transformation requires understanding the two-fold iteration of $t$ as the iterated fiber of a single cube.   Note that there is an isomorphism $ \P({\bf n})\times \P({\bf n})\cong \P({\bf 2n})$ realized for example by sending $(S,T)\in \P(\{1,2,\dots, n\})\times \P(\{n+1, \dots, 2n \})$ to $S\cup T.$   Given this, we can treat $2n$-cubes as $(\P({\bf n})\times \P({\bf n}))$-diagrams.
  
  \begin{lem}\label{p:ttcube}
  Let $H:\Cfn n\rightarrow \D$ and ${\bf X}=(X_1, X_2, \dots, X_n)$ be an object in $\Cfn n$.  Then $ttH(X_1, X_2, \dots, X_n)$ is isomorphic to the iterated fiber of the $\P({\bf n})\times \P({\bf n})$-diagram defined by 
  \[(S,T)\in \P({\bf n})\times \P({\bf n})\mapsto H^{\bf X}_B(S\cup T).
  \]
  \end{lem}
  
  \begin{proof}
  By definition, $ttH(X_1, \dots, X_n)$ is the iterated fiber of the $n$-cubical diagram 
  \[
  S\mapsto (tH)^{\bf X}_B(S).
  \]
  But  for each $S\subseteq {\bf n}$, $(tH)^{\bf X}_B(S)$ is itself the iterated fiber of the $n$-cubical diagram
  \[
  T\mapsto (H)^{{\bf X}(S)}_B(T),
  \]
  where ${\bf X}(S)=(X^1(S), \dots, X^n(S))$ is as defined in Example \ref{e:chi}.  
 As observed in the discussion preceding  Definition \ref{d:assoccube},    homotopy fibers commute isomorphically in $\D$.    From this we see that $ttH({\bf X})$ is the iterated fiber of the $\P({\bf n})\times \P({\bf n})$-diagram 
  \[(S,T)\mapsto H^{{\bf X}(S)}_B(T).
  \]   It is easy to check that $H^{{\bf X}(S)}_B(T)=H^{\bf X}_B(S\cup T)$ and the result follows from this.   
  \end{proof}

\begin{rem}
In some instances, it will be more convenient to treat the $\P({\bf n})\times \P({\bf n})$-diagram 
  $(S,T)\in \P({\bf n})\times \P({\bf n})\mapsto H^{\bf X}_B(S\cup T)$
of the previous lemma as the ${\bf 2n}$-cube $\widetilde H^{\bf X}_B$ given by 
\[
U\subseteq {\bf 2n}\mapsto H(M_1(U), M_2(U), \dots, M_n(U))
\]
where 
\[
M_i(U)=\begin{cases} X_i  &\text{\ if \ }\{i,n+i\}\cap U=\emptyset,\\
B&\text{\ if\ }\{i,n+i\}\cap U\neq\emptyset.
\end{cases}
\]
\end{rem}

\begin{defn}\label{d:+}
Let $+: (\P_0({\bf 2}))^{\times 2n}\rightarrow (\P_0({\bf 2}))^{\times n}$ be the map  
\[
(S_1, S_2, \dots, S_n, T_1, \dots, T_n)\mapsto (V_1, V_2, \dots, V_n)
\]
where 
\[
V_i=\begin{cases} S_i\cup T_i&\text {\ if\ }S_i, T_i\neq \{2\}, \\
\{2\}&{\ if\ }S_i=\{2\}\text{\ or\ }T_i=\{2\}.
\end{cases}
\]
\end{defn}

By Lemma \ref{l:holim}.2, the set map $+:(\P_0({\bf 2}))^{\times 2n}\rightarrow (\P_0({\bf 2}))^{\times n}$ induces a map
\begin{equation}\label{e:Delta1}
tH({\bf X})=\underset{(\P_0({\bf 2}))^{\times n}}{\holim} (H^{\bf X}_B)^*\rightarrow\underset {(\P_0({\bf 2}))^{\times 2n}}{\holim}((H^{\bf X}_B)^*\circ +).
\end{equation}
But, it is straightforward to show that $(\widetilde H^{\bf X}_B)^*=(H^{\bf X}_B)^*\circ +.$  Hence, (\ref{e:Delta1}) gives us a natural transformation 
\begin{equation}\label{e:Delta}
+_H: tH\rightarrow ttH.
\end{equation}

With these definitions, we prove  Theorem \ref{t:tiscotriple}.

\begin{proof}
We begin by  showing  that the diagram
\begin{equation}\label{e:+diag}\xymatrix{t \ar[r]^{+}\ar[d]_{+}&tt\ar[d]^{+_{t}}\\
  tt\ar[r]_{t+}&ttt}
  \end{equation}
  commutes.  Let $H:\Cfn n\rightarrow \D$ and ${\bf X}=(X_1, \dots, X_n)$ be an $n$-tuple of objects in $\Cf$. 
Note that, as is the case with $ttH({\bf X})$, $tttH({\bf X})$ can be realized as the iterated fiber of a ${\bf 3n}$-cube.  In particular,  tttH({\bf X}) is the iterated fiber of the ${\bf 3n}$-cube that assigns to the set 
$U\subseteq {\bf 3n}$ the object
\[
H(J_1(U), \dots, J_n(U))
\]
with
\[
J_i(U)=
\begin{cases} 
X_i&\text{\ if\ }\{i, n+i, 2n+i\}\cap U=\emptyset, \\
B&{\text\ otherwise.}
\end{cases}
\]
Hence, $tttH({\bf X})$ can be treated as the holim of a $(\P_0({\bf 2}))^{\times 3n}$-diagram.   From this point of view, we see that  (\ref{e:+diag}) commutes by noting that 
the maps
are induced by the commuting diagram of set maps
\[
\xymatrix{\P_0({\bf 2})^{\times n}&\P_0({\bf 2})^{\times 2n}\ar[l]_+\\
\P_0({\bf 2})^{\times 2n}\ar[u]^+&\P_0({\bf 2})^{\times 3n}.\ar[l]^{{\rm id}\times +}\ar[u]_{+\times {\rm id}}}
\]

To see that 
\[
\xymatrix{&t\ar[dl]_{=}\ar[d]_{+}\ar[dr]^{=}&\\
  t&tt \ar[l]^{t\gamma}\ar[r]_{\gamma_{t}}&t.}
\]
commutes,  we note that $t\gamma$ and $\gamma_t$ are induced at the indexing set level by the maps 
\[
\iota_1: (\P_0({\bf 2}))^{\times n}\rightarrow (\P_0({\bf 2}))^{\times 2n}, \iota_1:(S_1, \dots, S_n)\mapsto (S_1, \dots, S_n, \{1\}, \dots, \{1\}),
\]
and 
\[
\iota_2: (\P_0({\bf 2}))^{\times n}\rightarrow (\P_0({\bf 2}))^{\times 2n}, \iota_2:(S_1, \dots, S_n)\mapsto (\{1\}, \dots, \{1\}, S_1, \dots, S_n),
\]
respectively.   It is straightforward to check that $+\circ\iota_1$ and $+\circ \iota_2$ are the identity map on $(\P_0({\bf 2}))^{\times n}$.   From this it follows that $t\gamma\circ +$ and $\gamma_t\circ+$ are the identity on $t$.  
\end{proof}

\subsection{$\perp_n$ is a cotriple}

To establish that $\perp_n$ is a cotriple, we describe an adjunction determined by the cotriple $t$.    
Categories equipped with cotriples have a related category of coalgebras, related to the original category by a  forgetful-cofree adjunction.

\begin{defn} (\cite{Mac}, Definition VI.2, dualized) If $(\perp, \delta, \epsilon)$ is a cotriple on a category ${\sc B}$, then the category ${\sc B}_\perp$ of $\perp$-coalgebras is the category whose objects are pairs $(B, \beta)$, where $B\in Ob({\sc B})$ and $\beta:B\to \perp B$, which satisfy
\[ \perp \beta\circ \beta= \delta_B\circ \beta\quad \text{and} \quad \epsilon_B\circ \beta=Id_B.\]
A morphism $f:(B, \beta)\to (B', \beta')$ in  ${\sc B}_\perp$ is a morphism $f:B\to B'$ in ${\sc B}$ such that $\perp f \circ \beta = \beta'\circ f$.
\end{defn}

Thus, the category of $t$-coalgebras, ${\Fun}({\Cfn n}, \D)_t$, consists of functors $G:{\Cfn n}\to \D$ that are equipped with a section $\beta:G\to tG$ to the natural transformation $\gamma_G:tG\to G$ which also  makes the diagram
\[ \xymatrix{ G \ar[r]^{\beta} \ar[d]^{\beta} & tG\ar[d]^{t\beta} \\ tG\ar[r]_{+_G} & ttG}\]
commute.  For example, for any functor $G\in{\Fun}({\Cfn n}, \D)$, there is an associated $t$-coalgebra $(tG, +_G)$.     Let $t^+: {\Fun}({\Cfn n}, \D)\to  {\Fun}({\Cfn n}, \D)_t$ be the free coalgebra functor, which is defined on objects by $t^+(G)=(tG, +_G)$.  

\begin{thm}  The functors
\[  \xymatrix{ {\Fun}({\Cfn n}, \D) \ar@/^/[r]^{t^+} & {\Fun}({\Cfn n}, \D)_t\ar@/^/[l]^{U^+}}\]
are an adjoint pair of functors, with the forgetful functor $U^+$ being the left adjoint.
\end{thm}

\begin{proof} The proof follows immediately from Theorem \ref{t:tiscotriple}, since $(t, +, \gamma)$ forms a cotriple.  The adjunction in question is the  forgetful-cofree adjunction which exists for any category of coalgebras over a cotriple.  The proof of this fact is formally dual to the proof of Theorem VI.2.1 found in \cite{Mac} for algebras over triples.
\end{proof}

We now turn our attention to the second adjoint pair of functors that we use to establish that $\perp _n$ is a cotriple.  
\begin{defn}  \label{d:sqcup}Let 
\[\Delta^*: \Fun({\Cfn n}, \D) \to \Fun({\Cf}, \D)\]
be the functor defined for a functor $H:\Cfn n\rightarrow \D$  by $\Delta^*H(X) = H(X, \ldots, X)$.  Let 
\[ \sqcup _n: \Fun({\Cf}, \D) \to \Fun({\Cfn n}, \D) \]
be the functor defined by precomposition with the functor $\sqcup^n$ of Examples \ref{e:chi} and \ref{e:Upsilon}.  That is, for a functor $F$  
\[ \sqcup _n(F)(X_1, \ldots , X_n) = (F\circ \sqcup^n) (X_1, \ldots , X_n) = F(X_1 \amalg_A \ldots \amalg_A X_n).\] 
\end{defn}

\begin{prop} The functors $\Delta^*$ and $\sqcup _n$ are an adjoint pair of functors, with $\Delta^*$ being the left adjoint.

\end{prop}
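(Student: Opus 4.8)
The plan is to exhibit an explicit natural bijection
\[
\HomFun(\Delta^* H, F) \;\cong\; \HomFun(H, \sqcup_n F)
\]
for $H \in \Fun(\Cfn n, \D)$ and $F \in \Fun(\Cf, \D)$, by pinning down what the relevant counit/unit natural transformations are and checking they are mutually inverse. First I would identify the unit and counit. For the unit, I need a natural transformation $\eta_H : H \to \sqcup_n \Delta^* H$; unwinding definitions, $(\sqcup_n\Delta^* H)(X_1,\dots,X_n) = \Delta^* H(X_1\amalg_A\cdots\amalg_A X_n) = H(X_1\amalg_A\cdots\amalg_A X_n,\dots,X_1\amalg_A\cdots\amalg_A X_n)$, and the $n$ coproduct inclusions $X_i \rightarrowtail X_1\amalg_A\cdots\amalg_A X_n$ assemble (via functoriality of $H$ in each variable) into a map $H(X_1,\dots,X_n)\to H(X_1\amalg_A\cdots\amalg_A X_n,\dots,X_1\amalg_A\cdots\amalg_A X_n)$. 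For the counit, I need $\varepsilon_F : \Delta^*\sqcup_n F \to F$; here $(\Delta^*\sqcup_n F)(X) = (\sqcup_n F)(X,\dots,X) = F(X\amalg_A\cdots\amalg_A X)$, and the fold map $X\amalg_A\cdots\amalg_A X \to X$ (which exists in $\Cf$ since it is compatible with the maps to $B$ and from $A$) induces $F(X\amalg_A\cdots\amalg_A X)\to F(X)$ on applying $F$.

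Next I would define the two assignments. Given $\sigma: \Delta^* H \to F$, send it to $(\sqcup_n\sigma)\circ\eta_H : H \to \sqcup_n\Delta^*H \to \sqcup_n F$; given $\tau: H \to \sqcup_n F$, send it to $\varepsilon_F\circ(\Delta^*\tau): \Delta^* H \to \Delta^*\sqcup_n F \to F$. Naturality in $H$ and $F$ is automatic from naturality of $\eta$ and $\varepsilon$, so the only real content is the two triangle identities (equivalently, that the round trips are identities). One direction amounts to checking that the composite
\[
\Delta^* H \xrightarrow{\ \Delta^*\eta_H\ } \Delta^*\sqcup_n\Delta^* H \xrightarrow{\ \varepsilon_{\Delta^* H}\ } \Delta^* H
\]
is the identity; evaluated at an object $X$, this says that the composite $H(X,\dots,X) \to H(X\amalg_A\cdots\amalg_A X,\dots) \to H(X,\dots,X)$ induced by the inclusions followed by the fold is the identity, which holds because the fold map post-composed with each inclusion $X\rightarrowtail X\amalg_A\cdots\amalg_A X$ is $\mathrm{id}_X$, together with functoriality of $H$ in all $n$ variables simultaneously. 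The other triangle identity,
\[
\sqcup_n F \xrightarrow{\ \eta_{\sqcup_n F}\ } \sqcup_n\Delta^*\sqcup_n F \xrightarrow{\ \sqcup_n\varepsilon_F\ } \sqcup_n F,
\]
is the dual bookkeeping: evaluated at $(X_1,\dots,X_n)$ it reduces to the statement that a certain map $F(X_1\amalg_A\cdots\amalg_A X_n) \to F((X_1\amalg_A\cdots\amalg_A X_n)\amalg_A\cdots\amalg_A(X_1\amalg_A\cdots\amalg_A X_n)) \to F(X_1\amalg_A\cdots\amalg_A X_n)$ is the identity, which again follows from the universal property of the coproduct in $\Cf$ (the relevant composite of inclusion and fold on the underlying coproduct objects is the identity).

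The main obstacle, such as it is, is purely organizational: keeping careful track of the $n$-fold coproduct combinatorics and verifying that all the structural maps (coproduct inclusions, fold maps) genuinely live in $\Cf$ rather than merely in $\C$ — i.e., that they respect the maps to the terminal object $B$ and from the initial object $A$. This is where one uses that $\Cf$ has coproducts computed in $\C$ (Section 2.1) and that $A\amalg_A\cdots\amalg_A A = A$. There is no homotopical subtlety here at all: unlike the adjunction involving $t$, both $\Delta^*$ and $\sqcup_n$ are strict (non-derived) functors and the adjunction is an honest adjunction, so no weak equivalences need to be inverted. I would also remark that, as with the classical Eilenberg--Mac Lane setup, this adjoint pair restricts to one between $\WR(\Cf,\D)$ and $\WR(\Cfn n, \D)$, since $\sqcup_n F$ is weakly $n$-reduced whenever $F$ is weakly reduced (any variable equal to $B$ makes the coproduct absorb to something equivalent making $F$ vanish up to weak equivalence) — this is what will later be combined with $(U^+, t^+)$ to produce $\perp_n$.
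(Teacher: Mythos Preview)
Your proof is correct and follows essentially the same route as the paper: the paper's maps $\Phi\sigma = \sqcup_n\sigma \circ H(i_1,\dots,i_n)$ and $\Psi\tau = F(+)\circ\tau$ are exactly your $(\sqcup_n\sigma)\circ\eta_H$ and $\varepsilon_F\circ(\Delta^*\tau)$, and both verifications reduce to the same underlying fact that the fold map is a retraction of each coproduct inclusion. You frame the check as the triangle identities while the paper checks $\Psi\Phi = \mathrm{id}$ and $\Phi\Psi = \mathrm{id}$ directly, but these are the same computations once unwound.

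One caution about your closing remark: it is not true that $\sqcup_n F$ is weakly $n$-reduced whenever $F$ is weakly $1$-reduced. If, say, $X_1 = B$ then $\sqcup_n F(B,X_2,\dots,X_n) = F(B\amalg_A X_2\amalg_A\cdots\amalg_A X_n)$, and there is no reason for this to be contractible merely because $F(B)\simeq\star$. The paper does not claim such a restriction; instead it composes $(\Delta^*,\sqcup_n)$ with the separate adjunction $(U^+,t^+)$ to produce $\perp_n$ (Remark~\ref{perp}). This side remark does not affect the proposition itself, but you should drop it.
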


\begin{proof}  Let $H:\Cfn n\rightarrow \D$ and $F:\Cf \rightarrow \D$.
We must prove that there are isomorphisms
\[\xymatrix{ \hom_{\Fun(\Cf,\D)}(\Delta^*H, F)\ar@/^/[r]^\Phi & \hom_{\Fun(\Cfn n,\D)}(H, \sqcup _n F)\ar@/^/[l]^{\Psi} }. \]
We do so by taking advantage of some coproduct properties.  

In $\Cf$, the coproduct of $X$ and $Y$ is the pushout of 
$$
Y\longleftarrow A\longrightarrow X
$$
which we denote $X\coprod_A Y$.   When $X=Y$, the fact that $X\coprod_AX$ is a pushout 
means that we have a fold map $+: X\coprod _A X\rightarrow X$ that serves as a section to 
the inclusion $X\rightarrow X\coprod_A X$ into either term of the coproduct.   Iterating this gives a fold map $+:\coprod _n X\rightarrow X$ that for each $1\leq k\leq n$ is a section to 
$\iota _k:X\rightarrow \coprod _nX$, inclusion into the $k$th term in the coproduct.

The map $\Phi$ sends a  natural transformation $\sigma: \Delta^*H\to F$ to the natural transformation $\Phi\sigma := \sqcup _n\sigma\circ H(i_1, \ldots , i_n)$.
On the other hand, a natural transformation $\tau: H\to \sqcup _nF$ is sent to $\Psi\tau:=F(+)\circ \tau$.

To see that $\Psi\Phi \sigma$ is equal to $\sigma$, consider the diagram:
\[ \xymatrix{   H(X, \ldots , X) \ar[d]^{H(i_1, \ldots , i_n)} \ar@{=}[dr]\\
H(\underset{i}{\coprod} X, \ldots , \underset{i}{\coprod} X) \ar[d]^{\sigma_{\coprod X}} \ar[r]_{H(+)} &H(X, \ldots , X) \ar[d]^{\sigma_X} \\
F(\underset{i}{\coprod} X) \ar[r]^{F(+)}& F(X) }\]
The bottom rectangle commutes by the naturality of $\sigma$, and the top triangle commutes because $+$ is a section to each $i_k$.  Going along the bottom and left edges of the diagram gives $\Psi\Phi\sigma$, while the right hand edge is just $\sigma$.

On the other hand, the diagram
\[ \xymatrix{ H(X_1, \ldots, X_n)\ar[r]^{H(i_1, \ldots, i_n)}\ar[d]_{\tau_{X_1, \ldots , X_n}} & \quad H(\underset{i}{\coprod} X_i, \ldots , \underset{i}{\coprod} X_i)\ar[d]^{\tau_{\coprod X_i, \ldots \coprod X_i}} \\
F(\underset{i}{\coprod} X_i)\ar@{=}[dr]\ar[r]^{\sqcup _n F(i_1, \ldots , i_n)} & F(\coprod(\underset{i}{\coprod} X_i))\ar[d]^{F(+)}\\
& F(\underset{i}\coprod X_i)}\]
commutes for the same reasons as the previous diagram, and shows that $\Phi\Psi \tau = \tau$.

\end{proof}

Recall that for an adjoint pair of functors, 
  \[ \xymatrix{F: \C \ar@<.5ex>[r] & \sc{D}:G  \ar@<.5ex>[l] }, \] 
  where $G$ is the right adjoint, the composition $F\circ G$ forms a cotriple on ${\sc A}$.   (See, for example, Appendix A.6 of \cite{We}.)
Since $\perp _n$ is the composition of the  left adjoint $\Delta ^*\circ U^+$ with the right adjoint
${t}^+\circ\sqcup _n$, it forms part of a cotriple.   In particular, the counit for the adjunction produced by the pair $(\Delta ^*\circ U^+, {t}^+\circ\sqcup _n)$
yields a natural transformation $ \epsilon:\perp _n\rightarrow {\rm id}$.   And, a natural 
transformation $\delta : \perp _n\rightarrow \perp _n\perp _n$ is defined by $\Delta ^*\circ U^+(\eta 
_{{t}^+\circ\sqcup _n})$ where $\eta$ is a unit for the adjunction.   This gives us the following.

\begin{thm} The functor and natural transformations $$(\perp_n,
\delta : \perp _n\rightarrow \perp _n\perp _n,  \epsilon:\perp _n\rightarrow {\rm id})$$  form a cotriple on the category of functors $\Fun(\Cf,\D)$.
\end{thm}

\subsection{Weakly reduced and degree $n$ functors}
We finish this section by introducing some properties of functors that are related to $t$ and $cr_n$.  

\begin{defn} \label{d:weaklyreduced} A functor $F\in {\Fun}(\Cfn n, \D)$ is weakly $n$-reduced provided that $F(X_1, \ldots , X_n)\simeq \star$ whenever any $X_i= B$.
\end{defn}

 \begin{prop}\label{tHreduced} Let $H:\Cfn n\rightarrow \D$.  The functor $tH$ is a weakly $n$-reduced functor.\\ 
 \end{prop}

 \begin{proof} 
We assume that $X_n=B$.   The argument in other cases is similar.
We describe $H^{\bf X}_B$ as a map of two $(n-1)$-cubes:
$
{\rm top}H^{\bf X}_B\rightarrow {\rm bottom}H^{\bf X}_B.
$
For $S\in {\P}({\bf n-1})$,
$$
{\rm top}H^{\bf X}_B(S)=H^{\bf X}_B(S)
$$
and
$$
{\rm bottom}H^{\bf X}_B(S)=H^{\bf X}_B(S\cup \{n\}).
$$
Consider the $(n-1)$-cube, $\partial H^{\bf X}_B$, obtained by taking the
homotopy fiber of
$
{\rm top}H^{\bf X}_B\rightarrow{\rm bottom}H^{\bf X}_B.
$
More explicitly, for $S\in {P}({\bf n-1})$,
$$
\partial H^{\bf X}_B(S)={\rm hofib}({\rm top}H^{\bf X}_B(S)\rightarrow {\rm bottom}H^{\bf X}_B(S)).
$$
Using Lemma \ref{l:iteratedfiber} one can show that $\ifiber (\partial H^{\bf X}_B)\cong \ifiber(H^{\bf X}_B)$.   
For each $S\in {
\P}({\bf n-1})$,  the map ${\rm top}H^{\bf X}_B(S)\rightarrow {\rm bottom}H^{\bf X}_B(S)$ is the identity since $X_n=B$.   Hence,  $\partial H^{\bf X}_B(S)\simeq \star$ for each $S$,  and so the iterated homotopy fiber of $\partial H^{\bf X}_B$ is equivalent to $\star$.   As a consequence,  the iterated homotopy fiber of
$H^{\bf X}_B$ is as well.

\end{proof}

\begin{cor}\label{c:crnreduced}
For a functor $F:\Cf\rightarrow \D$, $cr_nF$ is weakly $n$-reduced. 
\end{cor}

We use cross effects to define degree $n$ functors.
\begin{defn}   A functor $F:\Cf \rightarrow \D$ is degree $n$ if and only if for all $(n+1)$-tuples ${\bf X}$ 
of objects in $\Cf$, 
$$
cr_{n+1}F({\bf X})\simeq \star.
$$
\end{defn}
Whenever a functor is degree $n$, it is also degree $m$ for any $m>n$.  This is a consequence of the following lemma, which says that higher cross effects can be obtained by iterating  second cross effects.  The lemma implies, in particular, that if $cr_nF\simeq \star$, then $cr_{n+1}F \simeq \star$ as well.

\begin{prop} For a functor $F:\Cf \to \D$, and objects $X_1, \dots, X_n, X_{n+1}$ in $\Cf$, $cr_nF(
X_1, \dots, X_{n-1}, -)$ can be treated as a functor of one variable by holding the first $n-1$ variables fixed.  The second cross effect of this functor is $cr_{n+1}$.   Specifically, 
\[cr_2[cr_{n}F( X_1, \ldots , X_{n-1}, -) ](X_n, X_{n+1}) \simeq cr_{n+1}F(X_1, \ldots , X_{n+1}).\]
\end{prop}

\begin{proof}  The proof makes repeated use of Lemma \ref{l:iteratedfiber} which allows us to rewrite the diagrams whose iterated fibers yield $cr_2(cr_nF)$  to obtain the
$(n+1)$-cubical diagram defining $cr_{n+1}$.   
We begin by noting that $$cr_2[cr_nF(X_1, \ldots , X_{n-1}, -)](X_n, X_{n+1})$$ is defined to be the iterated homotopy fiber of the diagram
\[ \xymatrix{cr_nF(X_1, \ldots, X_{n-1}, X_n\amalg_A X_{n+1}) \ar[r]\ar[d] & cr_nF(X_1, \ldots , X_{n-1}, X_n\amalg_A B)\ar[d]\\
cr_nF(X_1,\ldots , X_{n-1}, B\amalg_A X_{n+1})\ar[r] & cr_nF(X_1, \ldots , X_{n-1}, B\amalg_A B).}\]
Each corner of this square diagram is the iterated homotopy fiber of an $n$-cube, so by Lemma \ref{l:iteratedfiber} the iterated homotopy fiber of the diagram above can be written as the iterated homotopy fiber of the following $2$-cube of $n$-cubes:
\begin{equation}\label{cr2crn}\xymatrix{F((\amalg _n)^{{\bf X}_{\emptyset}}_B )\ar[r] \ar[d] & F((\amalg _n)^{{\bf X}_{\{1\}}}_B ) \ar[d] \\ 
F((\amalg _n)^{{\bf X}_{\{2\}}}_B ) \ar[r] & F((\amalg _n)^{{\bf X}_{\{1,2\}}}_B )}
\end{equation}
 where
\begin{align*}
&{\bf X}_{\emptyset}=(X_1, \dots, X_{n-1}, X_n\amalg _AX_{n+1})\\
&{\bf X}_{\{1\}}=(X_1, \dots, X_{n-1}, X_n\amalg _AB)\\
&{\bf X}_{\{2\}}=(X_1, \dots, X_{n-1}, B\amalg _AX_{n+1})\\
&{\bf X}_{\{1,2\}}=(X_1, \dots, X_{n-1},B\amalg _AB).\\
\end{align*}

 As in the proof of Proposition \ref{tHreduced}, we write each of the $n$-cubes in (\ref{cr2crn}) as a map of $(n-1)$-cubes by replacing  an $n$-cube $\chi$ with the map of $(n-1)$-cubes
 ${\rm top}(\chi)\rightarrow {\rm bottom}(\chi)$
 where for $S\in P({\bf n-1})$, 
\begin{align*}
 &{\rm top}(\chi)(S)=\chi(S)\\
 &{\rm bottom}(\chi)(S)=\chi(S\cup\{n\}).
 \end{align*}

By again applying Lemma \ref{l:iteratedfiber}, we can view the iterated fiber of (\ref{cr2crn})
as the homotopy fiber of the map from the iterated homotopy fiber of the $2$-cube of $(n-1)$-cubes
\begin{equation}\tag{{\rm top}}\xymatrix{{\rm top}F((\amalg _n)^{{\bf X}_{\emptyset}}_B )\ar[r] \ar[d] & {\rm top}F((\amalg _n)^{{\bf X}_{\{1\}}}_B ) \ar[d] \\ 
{\rm top}F((\amalg _n)^{{\bf X}_{\{2\}}}_B ) \ar[r] & {\rm top}F((\amalg _n)^{{\bf X}_{\{1,2\}}}_B )}
\end{equation}
to the iterated homotopy fiber of 
\begin{equation}\tag{{\rm bottom}}\xymatrix{{\rm bottom}F((\amalg _n)^{{\bf X}_{\emptyset}}_B )\ar[r] \ar[d] & {\rm bottom}F((\amalg _n)^{{\bf X}_{\{1\}}}_B ) \ar[d] \\ 
{\rm bottom}F((\amalg _n)^{{\bf X}_{\{2\}}}_B ) \ar[r] & {\rm bottom}F((\amalg _n)^{{\bf X}_{\{1,2\}}}_B ).}
\end{equation}

All four $(n-1)$-cubes in  (bottom) are the same, so the  iterated homotopy fiber of (bottom) is weakly equivalent to $\star$.  Thus we can concentrate on determining the iterated homotopy fiber of (top).
But this diagram can be rewritten as a single $(n+1)$-cube that is precisely the one whose iterated homotopy fiber is $cr_{n+1}F(X_1,  \dots, X_n,  X_{n+1})$.

\end{proof}

%%%%%%%%%%%%%%%%%%% Next Section %%%%%%%%%%%%%%%%%%%%%%%

\section{Degree $n$ and $n$-excisive functors}

%%%%%%%%%%%%%%%%%%%%%%%%%%%%%%%%%%%%%%%%%%%%%%%%

In the next section, we use the cotriple $\perp _{n+1}$ to construct a degree $n$ approximation to a functor $F$.  In \cite{G3}, Goodwillie shows how to construct an $n$-excisive approximation to a functor.    We use this section to compare these two types of functors, showing that $n$-excisive functors are always degree $n$, and that degree $n$ functors behave like $n$-excisive functors on certain types of 
cubical diagrams (the condition we call $n$-excisive relative to $A$).  We conclude by proving that
the two notions are equivalent when $F$ is a functor that commutes with realization.

In this section, we work with functors $F:\Cf\rightarrow \s$, where $\Cf$ is the category of maps factoring $f:A\rightarrow B$ and $\s$ is a suitable model of spectra, such as in \cite{EKMM} or \cite{HSS}. We let $\star$ be the initial/final object in $\s$.  As in previous sections, we assume that $F$ preserves weak homotopy equivalences and takes values in fibrant objects.  We also assume that $\s$ has functorial fibrant and cofibrant replacements.  We first review the definition of $n$-excisive using the notions of cartesian and
strongly cocartesian diagrams from Definition \ref{d:cartesian}.  
 
\begin{defn}\label{d:n-excisive}\cite{G2} A functor $F$ is $n$-excisive if and only if for every strongly cocartesian $(n+1)$-cube of objects in $\Cf$, $\chi $, applying $F$ yields a cartesian $(n+1)$-cube, $F(\chi)$.   \end{defn}

  For ${\bf X}=(X_1, \ldots , X_{n+1})$ in $\Cf^{\times n+1}$, the cube $(\coprod_{n+1})^A_{\bf X}$ of Example \ref{e:Upsilon} is strongly homotopy cocartesian, recalling that each $A\rightarrow X_i$ is assumed to be a cofibration.  In fact, any strongly homotopy cocartesian cube with initial vertex $A$ is weakly equivalent %\begin{footnote}{It is a little unclear what it means for one cube to be homotopic to another;  in this case what I mean is that two diagrams with the same "input" information - i.e. initial vertex $A$ adjacent to vertices $X_1, \ldots , X_n$ - will have subsequent vertices and edges which only vary up to homotopy.  This is a direct consequence of the definition of strongly co-cartesian.}\end{footnote} 
  to one of this type by Proposition 2.2 of  \cite{G2}.   We prove that a degree $n$ functor will take strongly cocartesian diagrams like these to cartesian diagrams.   
  
\begin{defn}
The functor  $F:\Cf \rightarrow \s$  is $n$-excisive relative to $A$ if and only if  $F((\coprod_{n+1})^A_{\bf X})$ is cartesian for 
every $(n+1)$-tuple of objects ${\bf X}$ in $\Cf$.  
\end{defn}

\begin{prop} \label{p:degreen} Let $F$ be a functor from $\Cf$ to $\s$.  Let $n\geq 2$ be an integer.  The functor $F$ is degree $n-1$  if and only if $F$ is $(n-1)$-excisive relative to $A$.  \end{prop}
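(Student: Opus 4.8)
The plan is first to restate both sides as the vanishing of a total homotopy fiber, and then to realize the two cubes involved as the two extreme faces of one larger diagram.

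By definition $F$ is degree $n-1$ exactly when $cr_nF(X_1,\dots,X_n)=\tfiber F((\amalg_n)^{\bf X}_B)\simeq\star$ for every $n$-tuple ${\bf X}$ in $\Cf$. On the other side, a cube in $\s$ is cartesian precisely when its total fiber is trivial, so, using Example \ref{e:Upsilon} together with Proposition 2.2 of \cite{G2} (which identifies every strongly cocartesian $n$-cube with initial vertex $A$, up to weak equivalence, with a cube of the form $(\coprod_n)^A_{\bf X}$), the functor $F$ is $(n-1)$-excisive relative to $A$ exactly when $\tfiber F((\coprod_n)^A_{\bf X})\simeq\star$ for every ${\bf X}$. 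Both of the cubes $(\coprod_n)^A_{\bf X}$ and $(\amalg_n)^{\bf X}_B$ occur inside a single $\{0,1,2\}^{\bf n}$-shaped diagram in $\Cf$: in slot $i$ use the factorization $A\xrightarrow{\alpha_{X_i}}X_i\xrightarrow{\beta_{X_i}}B$, and to $v\in\{0,1,2\}^{\bf n}$ assign the object $\coprod_{v_i=1}X_i\amalg_A\coprod_{v_i=2}B$. Then the $\{0,1\}$-subcube (all $v_i\le1$) is $(\coprod_n)^A_{\bf X}$, with initial vertex $A$, while the $\{1,2\}$-subcube (all $v_i\ge1$), with initial vertex $\coprod_AX_i$, is $(\amalg_n)^{\bf X}_B$. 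So the proposition asserts that, after applying $F$, the total fiber of the $\{0,1\}$-face vanishes for all ${\bf X}$ if and only if that of the $\{1,2\}$-face does.

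I would prove this by peeling off one slot at a time. Splitting the last variable off each cube via the iterated total fiber lemma (Lemma \ref{iteratedfiber}) rewrites $cr_nF(X_1,\dots,X_n)$ as the homotopy fiber of the natural map $cr_{n-1}[F(-\amalg_AX_n)](X_1,\dots,X_{n-1})\to cr_{n-1}[F(-\amalg_AB)](X_1,\dots,X_{n-1})$, so ``degree $n-1$'' becomes the statement that $X_n\mapsto cr_{n-1}[F(-\amalg_AX_n)]$ is independent of $X_n$ up to natural equivalence (with base value at $X_n=B$); the same splitting rewrites $\tfiber F((\coprod_n)^A_{\bf X})$ as the homotopy fiber of $\tfiber F((\coprod_{n-1})^A_{(X_1,\dots,X_{n-1})})\to\tfiber[F(-\amalg_AX_n)]((\coprod_{n-1})^A_{(X_1,\dots,X_{n-1})})$, so ``$(n-1)$-excisive relative to $A$'' becomes the statement that $X_n\mapsto\tfiber[F(-\amalg_AX_n)]((\coprod_{n-1})^A_{(-)})$ is independent of $X_n$ up to natural equivalence (with base value at $X_n=A$). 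It is also convenient that $cr_m$ and the total fibers of these cubes kill constant functors for $m\ge1$, so one may replace $F$ by $\operatorname{hofib}(F(-)\to F(B))$ or by $\operatorname{cofib}(F(A)\to F(-))$ without changing either condition, which keeps the iterated total fiber bookkeeping under control; the one-slot case, and in particular the case $n=2$, is then a direct computation with an octahedron (iterated fiber) identity and the factorization $A\to X\to B$.

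The crux of the matter, and the step I expect to be the main obstacle, is bridging ``reduction toward $B$'', which controls cross effects and hence degree, and ``reduction toward $A$'', which controls excision relative to $A$. Both of the expressions above are functorial in the extra slot along the composable chain $A\to X\to B$; applying $cr_{n-1}$, respectively $\tfiber$ of the coproduct cube, to the fiber sequences this chain produces exhibits the ``toward $B$'' comparison as a cofiber and the ``toward $A$'' comparison as a fiber of closely related maps. Because $\s$ is stable these differ only by a suspension, and an octahedron argument (again Lemma \ref{iteratedfiber}) converts the two independence statements into one another. This is exactly the place where stability of $\s$ is used, and it is what makes degree $n-1$ and $(n-1)$-excisive relative to $A$ equivalent here rather than merely comparable; once the bridge is in place both directions of the proposition follow, the remaining work being the careful combinatorial tracking of iterated total fibers across the $\{0,1,2\}^{\bf n}$ diagram.
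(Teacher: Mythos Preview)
Your setup is right: both conditions are vanishing-of-total-fiber statements, and both relevant $n$-cubes sit as the extreme faces of the $\{0,1,2\}^{\bf n}$ diagram you describe. Your one-slot splitting via Lemma~\ref{iteratedfiber} is also correct. But the step you yourself flag as the crux is a genuine gap as written. After peeling off the last variable you are left with two independence statements, one for $X\mapsto cr_{n-1}G_X$ (where $G_X=F(-\amalg_A X)$) and one for $X\mapsto\tfiber\, G_X\bigl((\coprod_{n-1})^A_{(-)}\bigr)$, and your last paragraph does not explain how to pass between $cr_{n-1}$ and the coproduct-cube total fiber; an octahedron or fiber/cofiber manipulation alone cannot do this, because these are genuinely different constructions on the remaining $n-1$ variables. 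What is missing is an explicit use of the inductive hypothesis. Set $H_X=\operatorname{hofib}(G_X\to G_B)$; then ``$F$ is degree $n-1$'' says exactly that each $H_X$ is degree $n-2$, and by induction this is equivalent to each $H_X$ being $(n-2)$-excisive relative to $A$, which unwinds (using stability so that vanishing total fiber forces an equivalence) to the statement that $\tfiber\,G_X\bigl((\coprod_{n-1})^A_{(-)}\bigr)\to\tfiber\,G_B\bigl((\coprod_{n-1})^A_{(-)}\bigr)$ is an equivalence for every $X$. Now both independence statements concern the single assignment $X\mapsto\tfiber\,G_X\bigl((\coprod_{n-1})^A_{(-)}\bigr)$ along the composable chain $A\to X\to B$, and two-out-of-three finishes. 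The base case (degree $0$ iff $0$-excisive relative to $A$) is again just two-out-of-three on $F(A)\to F(X)\to F(B)$. So your route does work once the induction is made explicit; no octahedron is needed.

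The paper takes a different, essentially non-inductive route. It promotes your $\{0,1,2\}^{\bf n}$ diagram to a full $2n$-cube $\widetilde{\bf X}$ indexed by $P({\bf n})\times P({\bf n})$ (Definition~\ref{d:supercube}): fixing the first coordinate $S$ gives an $n$-cube of the form $(\coprod_n)^{{\bf Z}(S)}_B$, relevant to degree, while fixing the second coordinate $T$ gives an $n$-cube relevant to excision relative to $A$, and the two conditions in the proposition correspond to the extreme faces $S={\bf n}$ and $T=\emptyset$ (Remark~\ref{r:viewcube}). The key observation is that all the other sub-$n$-cubes in each direction are cartesian either automatically, because some edge is an identity map, or by applying the hypothesis to an auxiliary $n$-tuple (Lemma~\ref{l:degreen}, proved by an internal induction on $|S|$). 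One then repeatedly applies the cartesian-cube calculus of Lemma~\ref{l:cartesian}: knowing that all but one sub-$n$-cube of a cartesian larger cube is cartesian forces the last one to be as well. This keeps $n$ fixed, makes the role of the degeneracies in the $2n$-cube visible, and never directly compares $cr_{n-1}$ with the coproduct-cube total fiber. Your approach, once completed as above, trades that global picture for a cleaner recursion on $n$.
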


An integral part of the proof of this proposition will be the $n$-cube of $n$-cubes defined below.

\begin{defn}\label{d:supercube}
Let ${\bf X}=(X_1, \dots, X_n)$ be an $n$-tuple of objects in $\Cf$.  Each of these objects is equipped with morphisms $\alpha _{X_i}:A\rightarrow X_i$ and $\beta _{X_i}: X_i
\rightarrow B$ whose composition is $f$.  We use $\widetilde {\bf X}: {\mathcal P}({\bf n})
\times {\mathcal P}({\bf n})\rightarrow \Cf^{\times n}$ to denote the $n$-cube of $n$-cubes  that is defined as follows.   For $(S, T)\in {\mathcal P}({\bf n})
\times {\mathcal P}({\bf n})$, $
\widetilde {\bf X}(S,T)
$ is the $n$-tuple 
whose  $i$th object  is 
\[
(\widetilde {\bf X}(S,T))_i=\begin{cases} A, & \text{ if $i\notin S\cup T$;}\\
X_i, & \text{if $i\in S$, $i\notin T$;}\\
B, & \text{ if $i\in T$.}\end{cases} 
\]
For any $i\notin T$ the map $\widetilde {\bf X}(S,T)\to \widetilde {\bf X}(S,T\cup \{i\})$ is induced by the map $f$ if $i\notin S$, and otherwise is induced by the map $\beta_{X_i}$.  For $i\notin S$ the map $\widetilde {\bf X}(S,T)\to \widetilde {\bf X}(S\cup\{ i\},T)$ is induced by the map $\alpha_{X_i}$ if $i\notin T$ and otherwise is the identity map.   
\end{defn}

The target category for our functor $F$ is assumed to be stable.     Some of the subsequent results in this section hold in a more general context, but for our current applications using $\s$ as the target category suffices.  This 
enables us to make use of the following  observations.

\begin{rem}
   An $n$-cubical diagram in $\s$  is cocartesian if and only if it is cartesian. In particular, this implies that finite hocolimits and finite homotopy inverse limits commute. 
\end{rem}

\begin{rem}
   In $\s$, finite homotopy inverse limits commute with homotopy colimits of a countable filtered diagram. Thus, combined with the remark above one sees that finite homotopy inverse limits commute with the homotopy colimits over $\Delta^{op}$ as these homotopy colimits can be written using the filtration by skeleta as a countable filtered homotopy colimit of finite homotopy colimits. 
\end{rem}

\begin{rem}\label{r:tfiber}
Let $\chi$ be an $n$-cubical diagram.  The total homotopy fiber of $\chi$, denoted $\tfiber(\chi)$ is the homotopy fiber of the map 
\[\chi(\emptyset)\rightarrow \underset{{\P_0({\bf n})}}{\holim}(\chi).
\]
For $n$-cubes $\chi$ in $\s$, the iterated fiber and homotopy fiber of $\chi$ are weakly equivalent.   See section 1 of \cite{G2} for details.  
\end{rem}

The next lemma restates two propositions from \cite{G2}.   
 The lemma makes use of the fact that a map  of two  $n$-cubes, $\chi_1\to \chi_2$, is an $(n+1)$-cube.   

\begin{lem} \label{l:cartesian} (\cite{G2} 1.6, 1.7)  For any map $\chi_1\to \chi_2$ of $n$-cubes of objects in $\s$:
\begin{itemize}
\item  The $n$-cube $\chi_1$ is cartesian if $\chi_2$ is cartesian and the $(n+1)$-cube $\chi_1\to \chi_2$ is cartesian.
\item The $n$-cube $\chi_2$ is cartesian if $\chi_1$ is cartesian and the $(n+1)$-cube $\chi_1\to \chi_2$ is cartesian.
\item The $(n+1)$-cube $\chi_1\to\chi_2$ is cartesian if the $n$-cubes $\chi_1$ and $\chi_2$ are cartesian.
\end{itemize} 
\end{lem}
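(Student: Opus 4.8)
The statement is a transcription into our setting of \cite{G2}, Propositions 1.6 and 1.7, so the plan is not to reprove Goodwillie's result from scratch but to deduce all three bullets uniformly from the total-fiber formalism of Section~2 together with the stability of $\s$. The starting observation I would record is the dictionary between ``cartesian'' and ``contractible total fiber'': by Definition~\ref{d:cartesian}, an $n$-cube $\chi$ of objects in $\s$ is cartesian precisely when $\chi(\emptyset)\to\holim_{P_0({\bf n})}\chi(S)$ is a weak equivalence, and since $\s$ is stable a map in $\s$ is a weak equivalence if and only if its homotopy fiber is contractible; hence $\chi$ is cartesian iff $\tfiber(\chi)\simeq\star$. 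Applying this to $\chi_1$, to $\chi_2$, and to the $(n+1)$-cube $\chi_1\to\chi_2$, the three ``cartesian'' hypotheses/conclusions in the lemma become statements about contractibility of $\tfiber(\chi_1)$, $\tfiber(\chi_2)$, and $\tfiber(\chi_1\to\chi_2)$.

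Next I would apply Lemma~\ref{iteratedfiber} to the $(n+1)$-cube $\chi_1\to\chi_2$, viewed as a map of the two $n$-cubes $\chi_1$ and $\chi_2$, to get a natural weak equivalence
\[
\tfiber(\chi_1\to\chi_2)\;\simeq\;\hofib\bigl(\tfiber(\chi_1)\xrightarrow{\ g\ }\tfiber(\chi_2)\bigr),
\]
where $g$ is the map induced on total fibers by the map of cubes. Thus in $\s$ we have a homotopy fiber sequence
\[
\hofib(g)\longrightarrow\tfiber(\chi_1)\longrightarrow\tfiber(\chi_2),
\]
whose three terms are (respectively) contractible exactly when $\chi_1\to\chi_2$ is cartesian, $\chi_1$ is cartesian, and $\chi_2$ is cartesian. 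The proof then finishes by the ``two out of three'' property of fiber sequences in a stable category: this sequence underlies a distinguished triangle in the homotopy category of $\s$, so if any two of its terms are contractible, so is the third. Concretely: the first bullet is the case where $\tfiber(\chi_2)$ and $\hofib(g)$ are assumed contractible, forcing $\tfiber(\chi_1)\simeq\star$; the second bullet is the case where $\tfiber(\chi_1)$ and $\hofib(g)$ are assumed contractible, forcing $\tfiber(\chi_2)\simeq\star$; the third bullet is the case where $\tfiber(\chi_1)$ and $\tfiber(\chi_2)$ are assumed contractible, forcing $\hofib(g)\simeq\star$.

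The only step demanding genuine care is the very first one: it is stability of $\s$ that makes ``cartesian'' equivalent to ``total fiber contractible'' (the backward direction of that equivalence fails in a general pointed model category) and that makes the ``two out of three'' principle available; one may also invoke the Remark that in $\s$ cartesian coincides with cocartesian if one prefers to argue with cofiber sequences. Everything past that is bookkeeping with Lemma~\ref{iteratedfiber} and the map induced on total fibers. I would close with a remark noting that this recovers \cite{G2}, 1.6--1.7, in the present axiomatic framework, and that the three statements are really a single fact read three ways: contractibility of one term of a fiber triangle in $\s$ is detected by contractibility of the other two.
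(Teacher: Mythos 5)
The paper gives no proof of this lemma at all: it simply cites Goodwillie's \cite{G2}, Propositions~1.6 and~1.7, which are stated for $n$-cubes of spaces with quantified connectivity bounds (``$k$-cartesian''), and from which the stable version follows by taking $k=\infty$. Your argument is correct and is, in effect, the streamlined stable specialization of Goodwillie's proof, made self-contained by the machinery already set up in Section~2. The reduction of ``cartesian'' to ``$\tfiber\simeq\star$'' is valid in $\s$ (and you are right to flag that this direction of the equivalence uses stability); Lemma~\ref{iteratedfiber} then converts the $(n+1)$-cube into the fiber sequence $\tfiber(\chi_1\to\chi_2)\to\tfiber(\chi_1)\to\tfiber(\chi_2)$; and the two-out-of-three principle for distinguished triangles in the stable homotopy category of $\s$ yields all three bullets at once. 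One small point worth making explicit if you want to be fully rigorous in the second bullet: from $\tfiber(\chi_1\to\chi_2)\simeq\star$ you should say that the fiber sequence then exhibits $\tfiber(\chi_1)\to\tfiber(\chi_2)$ as a weak equivalence (e.g.\ via the long exact sequence of homotopy groups, or the rotated triangle), rather than only quoting ``two of three terms contractible forces the third'' --- which as literally stated is the long-exact-sequence argument anyway, so this is cosmetic. What your route buys over the bare citation is a proof entirely internal to the paper's framework, avoiding any appeal to the unstable connectivity bookkeeping in \cite{G2}; what it loses is the more general quantitative statement, which the paper does not need here but does implicitly use later in the analyticity section.
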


The proof  of Proposition \ref{p:degreen} relies on analyzing  $\widetilde{\bf X}$ from several 
different perspectives.   We single out two of these perspectives in the next remark.   Recall from 
Definition \ref{d:sqcup}  that 
$\sqcup _n$ denotes precomposition with the $n$-fold coproduct functor.

\begin{rem}\label{r:viewcube}  Let ${\bf X}$ be an $n$-tuple of objects in $\Cf$ and $F:\Cf\rightarrow \s$ be 
a functor.   
\begin{enumerate} 
\item {Fixing $S\in {\mathcal P({\bf n})}$ yields an $n$-cube, $\sqcup^n\widetilde {\bf X}(S,-)$.   When 
$S={\bf n}$, this $n$-cube  is $(\sqcup^n)^{\bf X}_B$, and, as such,  is precisely the type of $n$-cube used to show that a functor is degree $n-1$.
In other words, $F$ is degree $n-1$ if and only if   the 
$n$-cube $\sqcup _nF(\widetilde {\bf X}({\bf n}, -))$ is cartesian.}
\item{Fixing $T\in {\mathcal P({\bf n})}$ yields another $n$-cube, $\sqcup^n\widetilde {\bf X}(-,T)$.   When
$T={\emptyset}$, this $n$-cube is $(\sqcup^n)^A_{\bf X}$, and, as such, is precisely the 
type of $n$-cube used to show that a functor is $(n-1)$-excisive relative to $A$.   In other words,
$F$ is $(n-1)$-excisive relative to $A$ if and only if $\sqcup _nF(\widetilde {\bf X}(-,\emptyset))$
is cartesian.}
\end{enumerate}

For example, taking the first point of view when $n=2$, we have the  $2$-cube of  $2$-cubes $\sqcup _nF(\widetilde {\bf X}(S,T))$ whose outer square is indexed by the $S$ variable:
\setlength{\unitlength}{0.5cm}
\begin{center}
\begin{picture}(16,17)
\put(0,13.75){$F(A)$}
\put(0, 9){$F(B)$}
\put(5, 13.75){$F(B)$}
\put(4, 9){$F(B\coprod_A B)$}
\put(0.75,13.5){\vector(0,-1){3.75}}\put(-0.5, 11.5){\SMALL{$F(f)$}}
\put(1.75,14){\vector(1,0){3}}\put(2.5, 14.5){\SMALL{$F(f)$}}
\put(5.75,13.5){\vector(0,-1){3.75}}%\put(6, 11.5){\SMALL{$F(\alpha_Y)$}}
\put(1.75, 9.25){\vector(1,0){2}}%\put(2, 9){\SMALL{$F(\alpha_X)$}}

\put(6.5, 12){\vector(1,0){3}}\put(6.5, 13){\SMALL{ $\left( \begin{matrix} F(\alpha_{X_2})\\ F(1) \end{matrix} \right) $ }}
\put(6.5, 11.5){\vector(1,0){3}}

\put(3, 8.5){\vector(0,-1){3}}\put(-1.5, 7){\SMALL{ $\left(\begin{matrix} F(\alpha_{X_1}) & F(1) \end{matrix}\right)$ }}
\put(3.5,8.5){\vector(0, -1){3}}

\put(9.5,13.75){$F(X_2)$}
\put(8.5, 9){$F(B\coprod_A X_2)$}
\put(14.5, 13.75){$F(B)$}
\put(13.5, 9){$F(B\coprod_A B)$}
\put(10.25,13.5){\vector(0,-1){3.75}}
\put(11.5,14){\vector(1,0){2.75}}\put(12, 14.5){\SMALL{$F(\beta_{X_2})$}}
\put(15.25,13.5){\vector(0,-1){3.75}}\put(15.5, 11.5){\SMALL{$F(f)$}}
\put(12.5, 9.25){\vector(1,0){1}}

\put(12.5, 8.5){\vector(0, -1){3}}\put(13.5, 7){\SMALL{$\left(\begin{matrix} F(\alpha_{X_1}) & F(1)\end{matrix}\right)$}}
\put(13, 8.5){\vector(0, -1){3}}

\put(0,4.75){$F({X_1})$}
\put(0, 0){$F(B)$}
\put(4, 4.75){$F(X_1\coprod_A B)$}
\put(4, 0){$F(B\coprod_A B)$}
\put(0.75,4.5){\vector(0,-1){3.75}}\put(-1.25, 2.5){\SMALL{$F(\beta_{X_1})$}}
\put(2,5){\vector(1,0){2}}
\put(5.75,4.5){\vector(0,-1){3.75}}
\put(1.75, 0.25){\vector(1,0){2.25}}\put(2, -.5){\SMALL{$F(f)$}}

\put(6.5, 2.5){\vector(1,0){3}}\put(7, 1.5){\SMALL{$\left(\begin{matrix} F(\alpha_{X_1}) \\ F(1)\end{matrix}\right)$}}
\put(6.5, 3){\vector(1,0){3}}

\put(8.5,4.75){$F(X_1\coprod_A X_2)$}
\put(8.5, 0){$F(B\coprod_A X_2)$}
\put(13.5, 4.75){$F(X_1\coprod_A B)$}
\put(13.5, 0){$F(B\coprod_A B).$}
\put(10.25,4.5){\vector(0,-1){3.75}}
\put(12.5,5){\vector(1,0){1}}
\put(15.25,4.5){\vector(0,-1){3.75}}\put(15.5, 2.5){\SMALL{$F(\beta_{X_1})$}}
\put(12.5, 0.25){\vector(1,0){1}}\put(12, -.5){\SMALL{$F(\beta_{X_2})$}}

\end{picture}
\end{center}
\vskip .5 cm
\noindent
Taking the second point of view yields the $2$-cube of $2$-cubes $\sqcup _2F(\widetilde {\bf X}(S,T))$ whose outer square is indexed by the $T$ variable:
\setlength{\unitlength}{0.5cm}
\begin{center}
\begin{picture}(16,16)
\put(0,13.75){$F(A)$}
\put(0, 9){$F(X_1)$}
\put(5, 13.75){$F(X_2)$}
\put(4, 9){$F(X_1\coprod_AX_2)$}
\put(0.75,13.5){\vector(0,-1){3.75}}\put(-1, 11.5){\SMALL{$F(\alpha_{X_1})$}}
\put(1.5,14){\vector(1,0){3.25}}\put(2.5, 14.5){\SMALL{$F(\alpha_{X_2})$}}
\put(5.75,13.5){\vector(0,-1){3.75}}%\put(6, 11.5){\SMALL{$F(\alpha_Y)$}}
\put(1.75, 9.25){\vector(1,0){2}}%\put(2, 9){\SMALL{$F(\alpha_X)$}}

\put(6.5, 12){\vector(1,0){3}}\put(6.5, 13){\SMALL{ $\left( \begin{matrix} F(f)\\ F(\beta_{X_2}) \end{matrix} \right) $ }}
\put(6.5, 11.5){\vector(1,0){3}}

\put(3, 8.5){\vector(0,-1){3}}\put(-1, 7){\SMALL{ $\left(\begin{matrix} F(f) & F(\beta_{X_1}) \end{matrix}\right)$ }}
\put(3.5,8.5){\vector(0, -1){3}}

\put(9.5,13.75){$F(B)$}
\put(8.5, 9){$F(X_1\coprod_AB)$}
\put(14.5, 13.75){$F(B)$}
\put(13.5, 9){$F(X_1\coprod_A B)$}
\put(10.25,13.5){\vector(0,-1){3.75}}
\put(11,14){\vector(1,0){3.25}}\put(12, 14.5){\SMALL{$F(1)$}}
\put(15.25,13.5){\vector(0,-1){3.75}}\put(15.5, 11.5){\SMALL{$F(\alpha_{X_1})$}}
\put(12, 9.25){\vector(1,0){1.5}}

\put(12.5, 8.5){\vector(0, -1){3}}\put(13.5, 7){\SMALL{$\left(\begin{matrix} F(f) & F(\beta_{X_1})\end{matrix}\right)$}}
\put(13, 8.5){\vector(0, -1){3}}

\put(0,4.75){$F(B)$}
\put(0, 0){$F(B)$}
\put(4, 4.75){$F(B\coprod_A X_2)$}
\put(4, 0){$F(B\coprod_AX_2)$}
\put(0.75,4.5){\vector(0,-1){3.75}}\put(-.5, 2.5){\SMALL{$F(1)$}}
\put(1.5,5){\vector(1,0){2.5}}
\put(5.75,4.5){\vector(0,-1){3.75}}
\put(1.5, 0.25){\vector(1,0){2.5}}\put(2, -.5){\SMALL{$F(\alpha_{X_2})$}}

\put(6.5, 2.5){\vector(1,0){3}}\put(7, 1.5){\SMALL{$\left(\begin{matrix} F(f) \\ F(\beta_x)\end{matrix}\right)$}}
\put(6.5, 3){\vector(1,0){3}}

\put(8.5,4.75){$F(B\coprod_A B)$}
\put(8.5, 0){$F(B\coprod_A B)$}
\put(13.5, 4.75){$F(B\coprod_A B)$}
\put(13.5, 0){$F(B\coprod_A B)$.}
\put(10.25,4.5){\vector(0,-1){3.75}}
\put(12,5){\vector(1,0){1.5}}
\put(15.25,4.5){\vector(0,-1){3.75}}\put(15.5, 2.5){\SMALL{$F(1)$}}
\put(12, 0.25){\vector(1,0){1.5}}\put(12, -.5){\SMALL{$F(1)$}}

\end{picture}
\end{center}

\end{rem}
\vskip .5 cm
The final step before proving Proposition \ref{p:degreen} is to prove the next lemma.

\begin{lem}\label{l:degreen} Let ${\bf X}$ be an $n$-tuple of objects in $\Cf$ and $F:\Cf\rightarrow \s$ be 
a functor.   If $F$ is $(n-1)$-excisive relative to $A$, then for each $S\subset {\bf n}$, $S\neq {\bf n}$, 
the $n$-cube $\sqcup _n F(\widetilde {\bf X}(S,-))$ is cartesian.
\end{lem}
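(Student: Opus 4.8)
The plan is to argue by induction on $|S|$, holding $n$ and $F$ fixed, and to use throughout that $\s$ is stable — so that a cube of spectra is cartesian precisely when it is cocartesian — together with the three–way comparison of Lemma~\ref{l:cartesian}. The base case is $S=\emptyset$: directly from Definition~\ref{d:supercube} one checks that $\coprod_n\widetilde{\bf X}(\emptyset,-)$ is exactly the cube $(\coprod_n)^A_{\bf B}$ of Example~\ref{e:Upsilon} attached to the $n$-tuple ${\bf B}=(B,\dots,B)$ (here one uses that the structure map $A\to B$ of the object $B\in\Cf$ is $f$), so $F$ applied to it is cartesian because $F$ is $(n-1)$-excisive relative to $A$, applied to ${\bf B}$.

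For the inductive step, fix $S$ with $1\le|S|\le n-1$ and assume the statement for all proper subsets of ${\bf n}$ of smaller cardinality. Choose $j\in S$. The maps $\alpha_{X_j}\colon A\to X_j$ assemble into a natural transformation of $n$-cubes $\widetilde{\bf X}(S\setminus\{j\},-)\to\widetilde{\bf X}(S,-)$; applying $\coprod_n$ and then $F$ produces an $(n+1)$-cube $\Omega$ two of whose opposite $n$-faces are $F(\coprod_n\widetilde{\bf X}(S\setminus\{j\},-))$ and $F(\coprod_n\widetilde{\bf X}(S,-))$. The first face is cartesian by the inductive hypothesis (since $S\setminus\{j\}\subsetneq{\bf n}$ has smaller cardinality), so by Lemma~\ref{l:cartesian} it is enough to show $\Omega$ is cartesian. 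Split $\Omega$ as a map of $n$-cubes along the $j$-direction of $P({\bf n})$. On the face $\{T:j\in T\}$ the $j$-coordinate is constantly $B$, and by the last clause of Definition~\ref{d:supercube} the transition map in the new direction is then the identity; hence that face of $\Omega$ is degenerate in one coordinate and so is cartesian by Lemma~\ref{iteratedfiber}. By Lemma~\ref{l:cartesian} it therefore remains to show that the face $\{T:j\notin T\}$ is cartesian, i.e.\ that $F$ applied to the $n$-cube $[\,{\mathcal W}\to X_j\coprod_A{\mathcal W}\,]$ is cartesian, where ${\mathcal W}=\coprod_{n-1}\widetilde{\bf X}'(S\setminus\{j\},-)$ is built from the $(n-1)$-tuple obtained by deleting the $j$th coordinate (note that ${\mathcal W}$, and hence $[\,{\mathcal W}\to X_j\coprod_A{\mathcal W}\,]$, is strongly cocartesian). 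One then compares $[\,{\mathcal W}\to X_j\coprod_A{\mathcal W}\,]$ with $[\,{\mathcal W}\to B\coprod_A{\mathcal W}\,]$ via $\beta_{X_j}$: the latter cube is, after relabeling the new coordinate, of the form $\coprod_n\widetilde{\bf Y}(S',-)$ with $S'\subsetneq{\bf n}$ and $|S'|=|S|-1$, hence cartesian by the inductive hypothesis, and one feeds this into Lemma~\ref{l:cartesian} together with the degeneracies produced (as above) whenever a coordinate reaches $B$.

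The hard part is exactly this last comparison: showing that the auxiliary $(n+1)$-cubes arising here are themselves cartesian without running in circles. Several of the obvious ways to slice them reproduce $\coprod_n\widetilde{\bf X}(S,-)$ itself, so one must decompose only in directions for which some face is either degenerate (a coordinate sitting at $B$, by Definition~\ref{d:supercube}) or identifiable with a strictly smaller instance of the lemma. I expect that carrying this out cleanly requires, alongside the induction on $|S|$, a subsidiary induction on $n$ (equivalently on $|{\bf n}\setminus S|$) for the intermediate cubes, or else a Fubini-style computation of the total fibers of the relevant piece of the cube-of-cubes $\widetilde{\bf X}$ along the lines of the proof of Lemma~\ref{l:section}. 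Stability of $\s$ is used essentially throughout, both through Lemma~\ref{l:cartesian} and because $F$ itself does not take cocartesian cubes to cocartesian cubes.
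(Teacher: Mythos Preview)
Your base case and the overall inductive framework match the paper's, but the inductive step is not completed: you correctly diagnose that peeling off a single $j\in S$ and then comparing via $\beta_{X_j}$ leads to auxiliary cubes that threaten to reproduce the very statement you are proving, and your suggested fix (a subsidiary induction on $n$ or on $|{\bf n}\setminus S|$) is not carried out.  As stated, the proposal is a sketch with an acknowledged gap at the decisive point.

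The paper's move is to avoid peeling off one coordinate of $S$ and instead to work with the whole $(|S|+n)$-cube at once: restrict $\widetilde{\bf X}$ to $\mathcal{P}(S)\times\mathcal{P}({\bf n})$ and slice it according to $R':=T\cap S$.  For each fixed $R'\subseteq S$ the remaining $n$-cube, indexed by $R\subseteq S$ and $U\subseteq{\bf n}\setminus S$, is $\sqcup_nF(\widetilde{\bf X}(R,R'\cup U))$.  When $R'=\emptyset$ this is exactly $F((\coprod_n)^A_{\bf Y})$ for the $n$-tuple ${\bf Y}$ with $Y_i=X_i$ for $i\in S$ and $Y_i=B$ for $i\notin S$, hence cartesian by the relative $(n-1)$-excision hypothesis.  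When $R'\neq\emptyset$, any $i\in R'$ lies in $T=R'\cup U$, so the $i$th coordinate is identically $B$ and the cube is the identity map of an $(n-1)$-cube in the $R$-direction $i$; it is therefore cartesian.  Lemma~\ref{l:cartesian} now makes the full $(|S|+n)$-cube cartesian.  Finally, view that same cube as the $S$-cube $R\mapsto\sqcup_nF(\widetilde{\bf X}(R,-))$: the inductive hypothesis gives cartesianness of every face $R\subsetneq S$, so one more application of Lemma~\ref{l:cartesian} forces the remaining face $R=S$ to be cartesian.  The key point you were missing is that slicing by $T\cap S$ (rather than by a single $j$-direction) produces exactly one non-degenerate subcube, and that one is directly covered by the relative excision hypothesis --- no further induction is needed.
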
 

\begin{proof}   We prove this by induction on the size of $S$.   When $S=\emptyset$, 
$\sqcup^n(\widetilde {\bf X}(\emptyset, -))$ is the $n$-cube $(\sqcup^n)^A_{\bf B}$
where ${\bf B}=(B, B, \dots, B)$.   Since $F$ is $(n-1)$-excisive relative to $A$, 
we know that $\sqcup _nF(\widetilde {\bf X}(\emptyset, -))$ is cartesian.  

Let $k<n$ and assume that $\sqcup _nF(\widetilde {\bf X}(R, -))$ is cartesian for all $R\subset {\bf n}$ with $|R|<k$.   Let $S\subseteq {\bf n}$ with $|S|=k$.   To establish that $\sqcup _n F(\widetilde {\bf X}(S,-))$ is cartesian, we will first show that the $S$-cube of ${\bf n}$-cubes 
\begin{equation}\label{pr:cubeweneed}
\sqcup _nF(\widetilde {\bf X}(R,T)), \ \ \ \ R\subseteq S, T\subseteq {\bf n}
\end{equation}
obtained by 
restricting $\widetilde {\bf X}$ to $\mathcal{P}(S)\times \mathcal{P}({\bf n})$ is cartesian.   To do 
so we view this $S$-cube of ${\bf n}$-cubes from a different perspective, in particular as an $S$-cube of $S$-cubes of $({\bf n}-S)$-cubes.     

Fix $R'\subseteq  S$
and consider the $S$-cube of $({\bf n}-S)$-cubes, i.e., the ${n}$-cube, given by 
\begin{equation}\label{pr:newcube}
\sqcup _nF(\widetilde {\bf X}(R, R'\cup U))
\end{equation}
where $R$ varies over all subsets of $S$ and $U$ varies over all subsets of ${\bf n}-S$.  We claim
that for each $R'$, this is a cartesian ${n}$-cube.  
When $R'=\emptyset$, this follows because (\ref{pr:newcube}) is $F((\sqcup^n)^A_{\bf Y})$ where ${\bf Y}$ is the
$n$-tuple whose $j$th entry is 
\[
({\bf Y})_j=\begin{cases} X_j&\text{if $j\in S$}\\
B&\text{if $j\notin S$,}
\end{cases}
\] 
and $F$ is $(n-1)$-excisive relative to $A$.    Now 
suppose that $R'\neq \emptyset$ and $i\in R'$.   In this case, we can view (\ref{pr:newcube})
as a map of $(|S|-1)$-cubes of $({\bf n}-S)$-cubes:
\begin{equation}\label{nextnewcube}
\sqcup _nF(\widetilde {\bf X}(R'',R'\cup U))\rightarrow \sqcup _nF(\widetilde {\bf X}(R''\cup \{i\}, R'\cup U))
\end{equation}
where $R''\subseteq S- \{i\}$ and $U\subseteq {\bf n}- S$ vary.
By definition, the map in (\ref{nextnewcube}) is the identity for each choice of $R''$ and $U$.   
Applying Lemma \ref{l:cartesian}, we see that the $S$-cube of $({\bf n}- S)$-cubes in (\ref{pr:newcube})
is cartesian as a result.   

The $S$-cube of ${\bf n}$-cubes obtained by letting $R'$ in (\ref{pr:newcube}) vary over all subsets of $S$ is exactly
(\ref {pr:cubeweneed}).  Since (\ref{pr:newcube}) is cartesian for each choice of $R'$, Lemma {\ref {l:cartesian}} tells us that (\ref{pr:cubeweneed}) is cartesian.        

Finally, by assumption, we know that 
$\sqcup _nF(\widetilde {\bf X}(R,-)$ is a cartesian $n$-cube for each $R\neq S$.   Hence, the 
fact that  (\ref{pr:cubeweneed}) is cartesian coupled with Lemma \ref {l:cartesian} again
yields the fact that $\sqcup _nF(\widetilde {\bf X}(S,-)$ must be a cartesian $n$-cube as well.
\end{proof}

With this, we prove Proposition \ref{p:degreen}.

\begin{proof}
We begin by considering $\sqcup _nF(\widetilde {\bf X}(-,-))$ from the point of view of Remark \ref {r:viewcube}.2.  Let  $T\subset {\bf n}$, $T\neq \emptyset$, and $i\in T$.   We can view $\sqcup _nF(\widetilde{\bf X}(-,T))$ as a map of two $(n-1)$-cubes
\begin{equation}\label{e:splitcube}
\sqcup _nF(\widetilde {\bf X}(V,T))\rightarrow \sqcup _nF(\widetilde {\bf X}(V\cup \{i\},T))
\end{equation}
where $V\subseteq S- \{i\}$.  However, by definition, these two $(n-1)$-cubes are 
identical and the map between them is the identity.   Hence, $\sqcup _nF(\widetilde{\bf X}(-,T))$ is cartesian
for each $T\subseteq {\bf n}$, $T\neq \emptyset$. 

Now assume that $F$ is degree $n-1$.   By Remark \ref{r:viewcube}.2, it suffices to show that $\sqcup _nF(\widetilde{\bf X}(-,\emptyset))$ is cartesian to conclude that $F$ is $(n-1)$-excisive relative to $A$.    Viewing the cube as in Remark \ref{r:viewcube}.1, we see that for any $S\subseteq {\bf n}$, $\widetilde {\bf X}(S,-)$ is the $n$-cube $(\sqcup ^n)^{{\bf Z}(S)}_B$ where ${\bf Z}(S)$ is the $n$-tuple whose $i$th entry is 
\[
{\bf Z}(S)_i=\begin{cases} X_i&\text{if $i\in S$}\\
A&\text{if $i\notin S$}.  \end{cases}
\]
Since $F$ is degree $n-1$, it follows that $\sqcup _nF(\widetilde {\bf X}(S,-))$ is cartesian.
Then by Lemma \ref{l:cartesian}, $\sqcup _nF( \widetilde{\bf X}(-,-))$ is cartesian.   
Since $\sqcup _nF(\widetilde{\bf X}(-,T))$ is cartesian for 
each $T\neq \emptyset,$
Lemma \ref{l:cartesian} guarantees that $\sqcup _nF(\widetilde{\bf X}(-,\emptyset))$ must be as well.   

Assuming that $F$ is $n$-excisive relative to $A$, by Remark \ref{r:viewcube}.1, we need to show that $\sqcup _nF(\widetilde {\bf X}({\bf n},-))$ is cartesian to conclude that $F$ is degree $n-1$.  By Lemma \ref{l:degreen}, we know that $\sqcup _nF(\widetilde {\bf X}(S,-))$ is cartesian for each $S\subset {\bf n}$, $S\neq {\bf n}$.  Hence by Lemma \ref{l:cartesian} it
suffices to show that $\sqcup _nF(\widetilde {\bf X}(-,-))$ is cartesian.  We do so by applying Lemma \ref{l:cartesian} after making sure that 
$\sqcup _nF(\widetilde {\bf X}(-,T))$ is cartesian for each $T\subseteq {\bf n}$.  This has been done
for $T\neq \emptyset$ above.   When $T=\emptyset$, $\widetilde{\bf X}(-,T)$ is $(\sqcup^n)^{A}_{\bf X}$ and so $\sqcup _nF(\widetilde {\bf X}(-,T))$ is cartesian by assumption.   
\end{proof}

One can extend a functor $F:\Cf\rightarrow\s$ to the category of  simplicial objects $s\Cf$ in two ways -- by applying $F$ degreewise to a simplicial object $X.$ to obtain a simplicial object $F(X.)$ in $\s$ or by 
applying $F$ to the geometric realization $|X.|$ (recall our convention that all realizations are ``fat''). 
When these two approaches agree, i.e., when the natural map $|F(X.)|\rightarrow F(|X.|)$ is a weak equivalence 
for each simplicial object $X.$, we say that $F$ {\it commutes with realizations}. In particular, by Remark 4.6, in $\s$, finite homotopy limits commute with realizations.

\begin{prop} \label{p:realizations} If  $F:\Cf \rightarrow \s$ commutes with realizations and preserves weak equivalences, then $F$ is degree $n$ if and only if $F$ is $n$-excisive.
\end{prop}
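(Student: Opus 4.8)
The plan is to leverage Proposition \ref{p:degreen}, which already identifies ``degree $n$'' with ``$n$-excisive relative to $A$.'' Since every $n$-excisive functor is visibly degree $n$ (the strongly cocartesian cubes $(\coprod_{n+1})^A_{\bf X}$ are among those on which an $n$-excisive functor is tested, and their images under $cr_{n+1}$ compute the total fiber of a cartesian cube, hence $\simeq\star$), one direction requires no hypothesis on realizations. For the converse, it suffices to upgrade ``$n$-excisive relative to $A$'' to ``$n$-excisive'' using the fact that $F$ commutes with realizations. So the real content is: if $F$ commutes with realizations and is $n$-excisive relative to $A$, then $F$ is $n$-excisive.

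First I would recall (this is cited in the excerpt as Proposition 2.2 of \cite{G2}) that any strongly cocartesian $(n+1)$-cube $\chi$ with initial vertex $A$ is weakly equivalent to one of the form $(\coprod_{n+1})^A_{\bf X}$, so $n$-excisivity relative to $A$ already handles all strongly cocartesian cubes \emph{with initial vertex $A$}. The issue is a strongly cocartesian $(n+1)$-cube $\chi$ whose initial vertex $\chi(\emptyset) = Z$ is an arbitrary object of $\Cf$, not $A$. The standard device (again from \cite{G2}, and the one Goodwillie uses to prove $T_n F$ is well-behaved) is to write $Z$ as the realization of a simplicial object built from copies of $A$ together with the coproduct structure — concretely, one uses that in $\Cf$ every object $Z$ receives a map from the initial object $A$, and one resolves $Z$ by a simplicial object $Z_\bullet$ whose terms are iterated coproducts $A\coprod_A\cdots\coprod_A A$ over $A$ (equivalently, one uses a bar-type resolution so that $|Z_\bullet|\xrightarrow{\simeq} Z$). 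Pushing the cube $\chi$ along this resolution produces a simplicial strongly cocartesian $(n+1)$-cube $\chi_\bullet$ with $|\chi_\bullet|\simeq\chi$ and with each $\chi_p$ having initial vertex built from $A$ (so that $\chi_p$ is, up to weak equivalence, of the form $(\coprod_{n+1})^A_{{\bf X}_p}$ after suitably cofibrantly replacing).

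Then I would apply $F$ degreewise. By $n$-excisivity relative to $A$, each $F(\chi_p)$ is a cartesian $(n+1)$-cube; since $\s$ is stable, ``cartesian'' is detected by the vanishing of the total fiber, and total fiber is a (finite) homotopy limit, which in a stable category is also a finite homotopy colimit and in particular commutes with geometric realization of simplicial objects. Because $F$ commutes with realizations, $|F(\chi_\bullet)|\simeq F(|\chi_\bullet|)\simeq F(\chi)$, and $\operatorname{tfiber}(F(\chi)) \simeq \operatorname{tfiber}(|F(\chi_\bullet)|)\simeq |\operatorname{tfiber}(F(\chi_\bullet))| \simeq |\star| \simeq \star$. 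Hence $F(\chi)$ is cartesian, i.e., $F$ is $n$-excisive. Combining with Proposition \ref{p:degreen} gives the equivalence.

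The main obstacle I anticipate is the construction and bookkeeping of the resolution $\chi_\bullet$: one must produce a simplicial strongly cocartesian $(n+1)$-cube, with the correct realization, whose terms fall under the scope of Lemma \ref{l:degreen}/Proposition \ref{p:degreen} (i.e., are strongly cocartesian cubes with initial vertex weakly equivalent to $A$), all while staying within cofibrant objects of $\Cf$ so that the homotopy (co)limits behave. The interchange of $\operatorname{tfiber}$ with realization is routine given stability and Lemma \ref{l:holim}.4 together with the fact that in a stable model category finite homotopy limits commute with filtered/sifted homotopy colimits such as realization; this should be quoted rather than reproved. Everything else is an assembly of Lemma \ref{l:cartesian}, Proposition \ref{p:degreen}, and the defining property of ``commutes with realizations.''
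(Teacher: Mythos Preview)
Your overall architecture is right: use Proposition \ref{p:degreen} for one direction, and for the converse resolve an arbitrary strongly cocartesian cube simplicially so that each level falls under ``$n$-excisive relative to $A$,'' then invoke stability plus ``$F$ commutes with realizations'' to pass to the limit. The gap is in the resolution you describe. A simplicial object whose terms are iterated coproducts $A\coprod_A\cdots\coprod_A A$ is constant at $A$, since coproducts in $\Cf$ are taken over $A$; its realization is $A$, not $Z$. There is no bar-type resolution in $\Cf$ that expresses a general $Z$ as a realization of objects built solely from $A$, so the step ``each $\chi_p$ has initial vertex $A$'' cannot be arranged this way.

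The paper's proof resolves the \emph{entire cube} rather than its initial vertex: for each $S$ it replaces $\mathcal X(S)$ by the bar construction $B_\bullet(\mathcal X,S)$ computing $\hocolim_{\mathcal D_S}\mathcal X$, so that (by Lemma \ref{l:Bar}) the resulting simplicial cube is levelwise a pushout cube and realizes to $\mathcal X$. Crucially, the level-$k$ cube $B_k(\mathcal X,-)$ still has initial vertex $\mathcal X(\emptyset)$, not $A$. The missing idea is then to manufacture an auxiliary cube with initial vertex $A$: at each level $k$ one deletes from every $B_k(\mathcal X,S)$ the summand indexed by the constant chain $\emptyset\to\cdots\to\emptyset$, obtaining $B_k^{-1}(\mathcal X,S)$ with $B_k^{-1}(\mathcal X,\emptyset)=A$. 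Both the $(n+1)$-cube $B_k^{-1}(\mathcal X,-)$ and the $(n+2)$-cube $B_k^{-1}(\mathcal X,-)\to B_k(\mathcal X,-)$ are strongly cocartesian with initial vertex $A$; since $F$ is degree $n$ (hence degree $n+1$), Proposition \ref{p:degreen} makes $F$ of both cartesian, and Lemma \ref{l:cartesian} then forces $F(B_k(\mathcal X,-))$ to be cartesian. From there your final paragraph applies verbatim. So keep your strategy, but replace the nonexistent resolution of $Z$ by this $B^{-1}_k$ trick.
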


\begin{proof}  By Proposition \ref{p:degreen}, we know that an $n$-excisive functor is always degree $n$, so we need only show that if $F$ is  degree $n$ then $F$ is $n$-excisive. 
The strategy for the proof is to show that any strongly cocartesian $(n+1)$-cube ${\mathcal X}$ can be replaced by an equivalent $(n+1)$-cube $B({\mathcal X})$ built using the generalized bar construction, as defined 
in Section 2.2.   We are then able to show that $F(B({\mathcal X}))$ is cartesian by applying Lemma \ref{l:cartesian} levelwise to a map of $(n+1)$-cubes $F(B^{-1}({\mathcal X}))\rightarrow F(B({\mathcal X})).$
We illustrate the case $n=1$ first, with the proof of $n>1$ to follow.  

Let 
\begin{equation}\label{e:cosquare}\xymatrix{ X \ar[r] \ar[d] & Y\ar[d]  \\ Z \ar[r] &W}\end{equation}
 be a cocartesian square.    Let $B_\bullet(X,Y)$ and $B_\bullet(X,Z)$ be the generalized bar constructions whose realizations yield ${\rm hocolim}(X\rightarrow Y)$ and ${\rm hocolim}(X\rightarrow Z)$, respectively.  Since $\operatorname{hocolim}\{X\to Y\} \simeq Y$, we know that  $|B_\bullet(X,Y)|\simeq Y$.   Similarly, $|B_{\bullet}(X,Z)|\simeq Z$, and $|B_{\bullet}(X)|\simeq X$ where
 $B_{\bullet}(X)$ is the generalized bar construction coming from the trivial diagram $X$.   
By the first two parts of Lemma \ref{l:Bar}, 
\[ \xymatrix{ B_{\bullet}(X) \ar[r] \ar[d] & B_{\bullet}(X,Y)  \\B_{\bullet}(X, Z)}\] 
 is a cofibrant replacement of the diagram
 \[ \xymatrix{ X \ar[r] \ar[d] & Y  \\ Z, }\] and hence we can take the strict pushout of this diagram in order to compute the homotopy colimit $W$.  But by Lemma \ref{l:Bar}.3, this strict pushout 
 is precisely the bar construction whose realization yields ${\rm hocolim}(Y\leftarrow X\rightarrow Z)$.  We use $B_{\bullet}(Y,X,Z)$ to denote this generalized bar construction.   Since $F$ preserves weak equivalences, we may replace our original diagram (\ref{e:cosquare}) with
 \begin{equation}\label{e:barsquare}\xymatrix{ B_{\bullet}(X) \ar[r] \ar[d] & B_{\bullet}(X,Y)\ar[d]  \\B_{\bullet}(X, Z) \ar[r] &B_{\bullet}(Y,X,Z).}
\end{equation}
  Looking levelwise %and using $+_A$ to denote the coproduct over $A$, 
  we see that  in degree $k$, the diagram is
\[ \xymatrix{X \ar[r] \ar[d] &Y\coprod_A X\coprod_A \cdots \coprod_A X \ar[d]\\
Z\coprod_A X\coprod_A \cdots \coprod_A X \ar[r]&Y\coprod_A X\coprod_A \cdots \coprod_A X\coprod_AZ}\]
where the coproducts in the upper right and lower left corners contain $k+1$ copies of $X$ and that in the bottom right corner has $2(k+1)-1$ copies of $X$.  

After applying the degree $1$ functor $F$ to (\ref{e:barsquare}),  we would like to show that 
\[F(B_\bullet(X,Y,Z)) = B_\bullet(\tilde F(X), \tilde F(Y), \tilde F(Z)).\]
(where $\tilde F(-)$ represents the cofibrant replacement of $F(-)$).  This is easily done in the degree $1$ case because Proposition \ref{p:degreen} guarantees that whenever $F$ is degree $1$,  the diagram 
\[\xymatrix{F(A)\ar[r]\ar[d]&F(X_1)\ar[d]\\
F(X_2)\ar[r]&F(X_1\underset{A}\amalg X_2)}
\]
obtained by applying $F$ to a cocartesian square must be cartesian.   Since $F$ takes values in $\s$, the square is also cocartesian and so
\[F(X_1\coprod _A X_2)\simeq F(X_1)\coprod _{F(A)}F(X_2).\]
However, this approach cannot be generalized to functors of degree $n>1$, so instead we will describe  the proof for degree $1$ with an eye towards the general case.

At each simplicial level, we want to expand (\ref{e:barsquare}) into the cube 
\begin{equation}\label{e:barcube} \xymatrix{ A \ar[rr] \ar[dd] \ar[dr] && B_k^{-1}(X,Y) \ar[dd] \ar[dr] \\
& X \ar[rr] \ar[dd] && B_k(X,Y)\ar[dd] \\
B^{-1}_k(X,Z) \ar[rr] \ar[dr] && B^{-1}_k(Y,X,Z) \ar[dr]\\
& B_k(X,Z) \ar[rr] && B_k(Y,X,Z)}\end{equation}
where $B_k(X,Y)$, $B_k(X,Z)$ and $B_k(Y,X,Z)$ are the $k$th levels of the simplicial sets described above, and  $B^{-1}_k(X,Y)$ (respectively, $B_k^{-1}(X,Z)$ and $B_k^{-1}(Y,X,Z)$) is the object of $\Cf$ obtained from $B_k(X,Y)$ (respectively, $B_k(X,Z)$ and $B_k(X,Y,Z)$) by removing the copy of $X$ corresponding to the constant sequence $X= \cdots = X$.  The map $B^{-1}_k(X,Y) \to B_k(X,Y)$ is then given by the natural inclusion (respectively, $B_k^{-1}(X,Z)\to B_k(X,Z)$ and $B_k^{-1}(Y,X,Z)\to B_k(Y, X, Z)$).  We make no claim that $B^{-1}_\bullet(X,Y)$ is a simplicial object.  Each square face of this cube is easily seen to be a strict pushout, so the cube is strongly cocartesian.  As we determined in Section 3.4, the fact that  $F$ is degree 1 implies that it  is also degree 2.   Then by Proposition \ref{p:degreen}, we know that applying $F$ to (\ref{e:barcube}) yields a cartesian diagram.     The back face of (\ref {e:barcube})
\[\xymatrix{A \ar[r] \ar[d] & B^{-1}_k(X,Y)\ar[d] \\ B^{-1}_k(X,Z) \ar[r] & B^{-1}_k(Y,X,Z)}\]
is a pushout diagram with initial vertex $A$, and since $F$ is degree 1, Proposition \ref{p:degreen}  tells us that this square will be a cartesian square after $F$ is applied.  Recognizing that $F$ takes values in $\s$, and applying  Lemma \ref{l:cartesian}, we can  conclude that 
\[ \xymatrix{ F(B_k(X)) \ar[r] \ar[d] & F(B_k(Y)) \ar[d] \\ F(B_k(X,Z)) \ar[r] & F(B_k(Y,X,Z) )}\]
is also a pushout square.

We have now calculated that the diagram
\[ \xymatrix{ F(B_{\bullet}(X)) \ar[r] \ar[d] & F(Y) \simeq F(B_\bullet(X,Y)) \ar[d] \\
F(B_\bullet(X,Z))\simeq F(Z) \ar[r] & F(B_\bullet(Y,X,Z))}\]
is a pushout diagram levelwise, and hence is a pushout diagram of simplicial sets.  
As a result, 
\[|F(B_{\bullet}(Y,X,Z))|\simeq |\hocolim(F(B_{\bullet} (X,Y))\leftarrow F(B_{\bullet}(X))\rightarrow F(B_{\bullet}(X,Z))|.\]  
However, since
$F$ commutes with realizations and hocolim commutes with realizations, the right hand side of this
equivalence is equivalent to the homotopy pushout of
$F(Y)\leftarrow F(X)\rightarrow F(Z).$

To complete the proof, we note that by Remark 4.5  it suffices to show that applying $F$ to (\ref{e:cosquare}) yields a cocartesian square. Since $F$ commutes with realization  we have
\[ F(\operatorname{hocolim} \{Z \leftarrow X \rightarrow Y\}) \simeq F(|B_\bullet(Y,X,Z)|)\]\[ \simeq |F(B_\bullet(Y,X,Z))| \simeq \operatorname{hocolim} \{F(Z) \leftarrow  F(X) \rightarrow  F(Y)\},\]
which concludes the proof in the case $n=1$.

To prove the result for $n>1$, we let ${\mathcal X}:{\P}({\bf n}) \to \Cf$ be a strongly cocartesian $n$-cube.  For $S\in \P({\bf n})$, let ${\mathcal D}_S$ be the restriction of $\P({\bf n})$ to the collection of sets $\{ \emptyset, \{s\}\ |\ s\in S\}$.   Let $B_\bullet({\mathcal X}, S)$ be the generalized bar construction computing $\operatorname{hocolim}_{{\mathcal D}_S} $.   In simplicial level $k$, 
\[ B_k ({\mathcal X}, S) = \coprod_{\alpha_0\to \cdots \to \alpha_k \in {\mathcal D}_S} {\mathcal X}(\alpha_0) \]
where all coproducts are taken over ${\mathcal X}(\emptyset)$. 
We claim that the $n$-cube ${\mathcal X}$ is weakly equivalent to the $n$-cube $B_{\bullet}({\mathcal X},-)$.     One can verify this by using the fact that ${\mathcal X}$ is strongly cocartesian and applying Lemma \ref{l:Bar}.3.  In particular, if $S$, $S'$  are subsets of $\bf n$, then 
\[ \xymatrix { B_\bullet({\mathcal X}, S\cap S')\ar[r] \ar[d] & B_\bullet({\mathcal X}, S')\ar[d] \\ B_\bullet({\mathcal X}, S) \ar[r] & B_\bullet({\mathcal X}, S\cup S')}\]
is a pushout diagram.

Now, for each $k$ we consider the $(n+1)$-cube $B^{-1}_k({\mathcal X}, - ) \to B_k({\mathcal X}, -)$ where $B^{-1}_k({\mathcal X}, S)$ is obtained from  $B_k({\mathcal X}, S)$ by removing the copy of ${\mathcal X}(\emptyset)$ corresponding to the constant sequence $\emptyset\to \cdots \to\emptyset$  for each $\emptyset\neq S\subset {\bf n}$, and $B^{-1}_k({\mathcal X}, \emptyset)= A$.   There are natural inclusion maps $B^{-1}_k({\mathcal X}, S) \to B_k({\mathcal X}, S)$ given by the inclusion $A\to X$ indexed by the sequence $\emptyset \to \cdots \to \emptyset$.  The $n$-cube $B^{-1}_k({\mathcal X}, S)$ is again strongly cocartesian.  For any $S \to S\cup \{i\}$ in $\P({\bf n})$, the diagram
\[ \xymatrix{ B^{-1}_k({\mathcal X}, S) \ar[r] \ar[d] & B^{-1}_k({\mathcal X}, S\cup \{i\}) \ar[d] \\
B_k({\mathcal X}, S) \ar[r]  & B_k({\mathcal X}, S\cup \{i\})} \]
is easily seen to be a pushout diagram, since it can be rewritten as
\[ \xymatrix{ B^{-1}_k({\mathcal X}, S) \ar[r] \ar[d] & B^{-1}_k({\mathcal X}, S\cup \{i\}) \ar[d] \\
B^{-1}_k({\mathcal X}, S)\coprod_A X \ar[r]  & B^{-1}_k({\mathcal X}, S\cup \{i\})\coprod_A X.} \]

Thus the $(n+1)$-cube $B^{-1}_k({\mathcal X}, S)\to B_k({\mathcal X}, S)$ and the $n$-cube $B^{-1}_k({\mathcal X}, S)$ are strongly cocartesian cubes with initial vertex $A$, and any degree $n-1$ functor $F$ will take these to cartesian cubes by Proposition \ref{p:degreen}.  The remainder of the proof now proceeds exactly as in the case $n=1$.

\end{proof}

%%%%%%%%%%%%%%%%%%% Next Section %%%%%%%%%%%%%%%%%%%%%%
\section{A Taylor tower from cotriples}

Having established that $(\perp _n, \delta, \epsilon)$ is a cotriple for $n\geq 1$, we show in this
section how to define the cotriple Taylor tower for a functor from $\Cf$ to $\s$ where $\Cf$ and $\s$ are as described in the beginnings of Sections 3 and 4.    The results in this
section are generalizations to $\Cf$ and $\s$ of results in Section 2 of \cite{RB}.  In the next 
section of this paper, we compare this cotriple Taylor tower to Goodwillie's Taylor tower of $n$-excisive
approximations.  

To define the terms in our Taylor tower, we use the augmented simplicial objects associated
to the cotriples $(\perp _n, \delta, \epsilon)$.

\begin{defn}  Let $F$ be a functor from $\Cf$ to $\s$ and $n\geq 1$.   We use $\perp _n^{*+1}F$
to denote the simplicial object constructed from $F$ using the cotriple $(\perp _n, \delta, \epsilon).$   More specifically, $\perp _n^{*+1}F$ is the simplicial object that in simplicial degree 
$k$ is 
$
\perp _n^{k+1}F
$
with face and degeneracy maps defined by 
\begin{align*}
s_i=\perp ^{i}\delta \perp ^{k-i}\  :\   \perp _n^{k}F\rightarrow \perp _n^{k+1}F\\
d_i=\perp ^{i}\epsilon\perp ^{k-i}\  : \ \perp _n^{k}F\rightarrow \perp _n^{k-1}.
\end{align*}
\end{defn}

\begin{rem}
We note that $\perp _n^{*+1}$ is augmented over ${\rm id}_{\Fun(\Cf, \s)}$ by $\epsilon$.
Hence, $\epsilon$ yields a natural simplicial map from $\perp _n^{*+1}$ to the 
simplicial object ${\rm id}^{*+1}$ associated to the identity cotriple $({\rm id}_{\Fun(\Cf, \s)}, {\rm id},
{\rm id})$ built out of the identity functor.  
\end{rem}

\begin{defn}
Let $F$ be a functor from $\Cf$ to $\s$ and $n\geq 0$.    The $n$th term in the cotriple Taylor tower of 
$F$ is the functor 
\[
\Gamma_nF:={\operatorname{hocofiber}}(\xymatrix{|\perp_{n+1}^{*+1}F| \ar[r]^{\ \ \ \ \widehat\epsilon}&F})
\]
(recall our convention that realizations are ``fat'') where the map \[\xymatrix{|\perp_{n+1}^{*+1}F| \ar[r]^{\ \ \ \ \widehat\epsilon}&F}\] is the composition of the map induced by $\epsilon$ with the weak equivalence \[\xymatrix{|{\rm id}^{*+1}F| \ar[r]^{\ \ \ \ \ \simeq}& F}\] and $\operatorname{hocofiber}$ denotes the homotopy cofiber given by 
$$
\operatorname{hocofiber}(A\rightarrow B):=\hocolim(*\longleftarrow A\longrightarrow B).
$$
We use $\gamma _nF$ to denote the natural transformation $F\rightarrow \Gamma_nF$ in the resulting cofibration 
sequence $|\perp_{n+1}^{*+1}F|\rightarrow F\rightarrow \Gamma_nF$.  
\end{defn}

\bigskip\noindent
{\bf Convention.} Even when $F$ takes fibrant values, $\Gamma_nF$ may not. Since $\s$ has functorial fibrant replacements, we can replace $\Gamma_nF$ by a fibrant-valued, weakly equivalent functor $\hat\Gamma_nF$, in which case we actually obtain a natural diagram
\[\xymatrix{F\ar[r]& \Gamma_nF& \hat\Gamma_nF\ar[l]_{\simeq}}\]
We will abuse notation and continue to write this as $F\rightarrow \Gamma_nF$ and assume without loss of generality that our $\Gamma_nF$ takes fibrant values. 
\bigskip

Using properties of adjoint pairs, one can show that $\Gamma _nF$ is a degree $n$ 
approximation to $F$.

\begin{prop}\label{degPn}
For a functor $F:\Cf \rightarrow \s$, the functor $\Gamma_nF$ is degree $n$.  
\end{prop}

The analogous statement for functors to abelian categories, in \cite{RB} (Lemma 2.11), was proved by establishing that $\perp_{n+1}\perp_{n+1}^{*+1}F\rightarrow 
\perp_{n+1}F$ has a simplicial homotopy inverse, and hence, $\perp _{n+1} \Gamma_nF$ is contractible.
In the abelian setting, this was enough to conclude that for any collection of objects, $X_1, \dots, X_{n+1}$, in the domain category,  $cr _{n+1} \Gamma_nF(X_1, \dots, X_{n+1})$ is contractible, since it
is a direct summand of $\perp _{n+1}\Gamma_nF(\coprod X_i, \dots, \coprod X_i)$.   In the present paper, 
this last step is not an option.   However, one can modify the homotopies used in the abelian
case to prove directly that $$cr _{n+1}\perp _{n+1}^{*+1}F\rightarrow cr_{n+1}F$$ is a homotopy equivalence.   We adapt this approach for use in this paper.  

The proof makes use of the following general facts about adjoint pairs of functors.   

\begin{lem}Let $(L,R): {\mathcal A}\rightarrow {\mathcal B}$ be a pair of adjoint functors where $L:{\mathcal A}\rightarrow {\mathcal B}$ is the left adjoint.  Let $\perp=LR: {\mathcal B}\rightarrow {\mathcal B}$ be the associated cotriple and $B$ be an object in ${\mathcal B}$.   Then there
are natural simplicial maps  $f: R({\rm id} ^{*+1}B)\rightarrow R(\perp ^{*+1}B)$ and $\upsilon: R(\perp ^{*+1}B)\rightarrow R({\rm id} ^{*+1}B)$ such that $f\circ \upsilon$ is naturally homotopic to the identity on $R({\perp} ^{*+1}B)$ and $\upsilon\circ f$ is naturally homotopic to the identity on $R({\rm id}^{*+1}B).$
\end{lem}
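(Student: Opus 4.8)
The plan is to exhibit the maps $f$ and $\upsilon$ simplicial degree by simplicial degree using the unit $\eta:\mathrm{id}_{\mathcal A}\to R L$ and counit $\varepsilon:\perp = LR\to \mathrm{id}_{\mathcal B}$ of the adjunction, and then to construct the two simplicial homotopies by the standard ``extra degeneracy'' / contracting-homotopy formulas one uses to prove that a cotriple resolution is aspherical. Concretely, in simplicial degree $k$ the object $R(\perp^{*+1}B)$ is $R\perp^{k+1}B = R L R\cdots L R\,B$ ($k+1$ factors of $L$), while $R(\mathrm{id}^{*+1}B)$ is just $R B$ in every degree. First I would define $\upsilon_k:R\perp^{k+1}B\to RB$ to be $R$ applied to the $(k+1)$-fold iterate of the counit $\varepsilon$, i.e. $\upsilon_k = R(\varepsilon\cdots\varepsilon)$ collapsing all the $\perp$'s; and $f_k:RB\to R\perp^{k+1}B$ to be built from the unit, $f_k = (R L)^{k+1}$ applied via $R$ of the appropriate iterate of $\eta_{R(-)}$-type maps, so that $f_k$ is (up to the triangle identities) an iterated coface/codegeneracy. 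The first step is to check that these collections are in fact simplicial maps, i.e. that they commute with all $d_i = \perp^{i}\varepsilon\perp^{k-i}$ and $s_i = \perp^{i}(L\eta_R)\perp^{k-i}$; this is a direct diagram chase using naturality of $\eta,\varepsilon$ and the two triangle identities $\varepsilon L\circ L\eta = \mathrm{id}$ and $R\varepsilon\circ \eta R = \mathrm{id}$.

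The second step, and the substantive one, is to produce the two homotopies. For $\upsilon\circ f \simeq \mathrm{id}$ on $R(\mathrm{id}^{*+1}B)$: by the triangle identities the composite $\upsilon_k\circ f_k$ is literally the identity on $RB$ in each degree, so here the ``homotopy'' is trivial — the two maps agree on the nose, and naturality is immediate. For $f\circ\upsilon \simeq \mathrm{id}$ on $R(\perp^{*+1}B)$, one cannot expect equality; instead I would write down an explicit simplicial homotopy, that is, maps $h_j: R\perp^{k+1}B\to R\perp^{k+2}B$ for $0\le j\le k$, given by inserting a unit/counit in the $j$th slot, satisfying the standard simplicial-homotopy identities relating them to the face and degeneracy operators. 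This is exactly the ``extra degeneracy'' argument: the augmented simplicial object $R(\perp^{*+1}B)\to RB$ is split (the splitting $s_{-1}$ being $R$ of a unit map), and a split augmented simplicial object is simplicially contractible onto its augmentation; $f\circ\upsilon$ is the composite through the augmentation, so it is simplicially homotopic to the identity. I would phrase the homotopy via the formulas of, e.g., \cite{We} \S8.4--8.6, adapted so that every map in sight is natural in $B$.

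The main obstacle I expect is bookkeeping rather than conceptual: keeping track of which of the $k+1$ slots each $\eta$ or $\varepsilon$ acts in, and verifying that the contracting-homotopy formulas interact correctly with the degeneracies $s_i = \perp^i(L\eta_R)\perp^{k-i}$ as defined in the paper (the degeneracies here come from the unit, not from an ``extra'' one, so some care is needed to see that the extra degeneracy $s_{-1}$ is genuinely distinct and that the simplicial identities $d_0 s_{-1} = \mathrm{id}$, $d_{i+1}s_{-1} = s_{-1}d_i$ hold). Once the extra degeneracy is in place, naturality in $B$ of all the structure maps and homotopies follows from naturality of $\eta$ and $\varepsilon$, so the ``natural'' qualifier in the statement costs nothing extra. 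I would organize the write-up so that the bulk is the verification of the simplicial identities for the extra degeneracy, and then invoke the standard fact that a simplicial object admitting an extra degeneracy is contractible with the stated homotopies.
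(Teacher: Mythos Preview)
Your proposal is correct and matches the paper's approach essentially exactly: the paper defines $\upsilon_k=R(\epsilon\circ\epsilon_{\perp}\circ\cdots\circ\epsilon_{\perp^k})$ and $f_k=\eta_{R(LR)^k}\circ f_{k-1}$ with $f_0=\eta_R$, then refers to Weibel's Exercise~8.3.7, using $\eta_R$ as the extra degeneracy, for the homotopies. Your observation that $\upsilon\circ f$ is the identity on the nose (via the triangle identities) is a slight sharpening the paper does not spell out, but otherwise the two arguments are the same.
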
  

\begin{proof}   Let $\eta : {\rm id}_{\mathcal A}\rightarrow RL$ be the 
unit of the adjunction and $\epsilon: LR \rightarrow {\rm id}_{\mathcal B}$ be the counit of the adjunction.    One can use  $\epsilon$ and $\eta$  to construct the simplicial maps $f: R({\rm id} ^{*+1}B)\rightarrow R(\perp ^{*+1}B)$ and $\upsilon: R(\perp ^{*+1}B)\rightarrow R({\rm id} ^{*+1}B)$,  and simplicial homotopies between $f\circ \upsilon$ and $\upsilon \circ f$ and the appropriate identity maps.  
  In particular, in degree $k$, $\upsilon _k=R(\epsilon\circ \epsilon _{\perp}\circ \dots \circ \epsilon _{\perp^k})$ and $f_k=\eta _{R(LR)^k}\circ f_{k-1}$ with $f_0=\eta _R$.  One can construct the homotopies in a similar fashion to that of  Exercise 8.3.7 of \cite{We}, using $\eta _R$ in place of the extra degeneracy $\sigma _0$. 
  \end{proof}
With this, we prove Proposition \ref{degPn}.  
\begin{proof} 
Applying the above to the adjoint pair $(\Delta ^*\circ U^+, t^+\circ \sqcup _{n+1})$ of the previous 
section, we see that  $(t^+\circ \sqcup_{n+1}) \perp _{n+1}^{*+1}F$ is weakly equivalent 
to $t^+\circ \sqcup_{n+1}F$ via the augmentation $t^+\circ \sqcup _{n+1}(\epsilon)$.   Setting $cr_{n+1}^+F=t^+\circ \sqcup_{n+1}F$, this becomes $cr_{n+1}^+(\perp _{n+1}^{*+1}F)\simeq
cr_{n+1}^+({\rm id}^{*+1}F)$.   Applying the forgetful functor from $\Fun(\Cfn {n+1},\s)_t$ to $\Fun(\Cfn {n+1},\s)$ (this functor ``forgets'' the coalgebra structure)  gives us an equivalence between $cr_{n+1}F$ and $cr_{n+1}\perp _{n+1}F$.  The fact that $cr_{n+1}$, as a finite homotopy limit, commutes
with finite and filtered homotopy colimits implies that
\begin{align*}
cr_{n+1}\Gamma_nF&=cr_{n+1}(\operatorname{hocofiber}(|\perp _{n+1}^{*+1}F|\rightarrow {|\rm id}^{*+1}F|))\\
&\simeq \operatorname{hocofiber}(|cr_{n+1}(\perp _{n+1}^{*+1}F)|\rightarrow {|cr_{n+1}(\rm id}^{*+1}F)|))\\
&\simeq \star.
\end{align*}
Hence, $\Gamma_nF$ is degree $n$.
\end{proof}
We consider the functor $\Gamma_nF$ to be an ``approximation" to $F$ in the following sense.

\begin{prop}\begin{enumerate}
\item {If $F$ is degree $n$, then the natural transformation $\gamma_nF: F\rightarrow \Gamma_nF$ is a weak equivalence.}
\item {The pair $(\Gamma_nF, \gamma_nF)$ is universal, up to weak equivalence, among degree $n$ functors with natural transformations from $F$.}  \end{enumerate}
\end{prop}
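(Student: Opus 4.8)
The plan is to deduce (1) from contractibility of the resolution and to get (2) by the standard ``idempotent localization is a reflection'' argument, the subtle point being that we have no model structure on the functor categories.

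For (1), recall $\Gamma_nF$ is the homotopy cofibre of the augmentation $|\perp_{n+1}^{*+1}F|\to F$, with $\gamma_nF\colon F\to\Gamma_nF$ the induced map; so it suffices to prove $|\perp_{n+1}^{*+1}F|\simeq\star$ when $F$ is degree $n$. By definition of degree $n$, $cr_{n+1}F\simeq\star$, and by Remark~\ref{perp} the functor $\perp_{n+1}F$ is obtained from $cr_{n+1}F$ by forgetting a section ($U^+$) and restricting along the diagonal ($\Delta^*$), so $\perp_{n+1}F\simeq\star$ objectwise. Since $\perp_{n+1}$ is a composite of operations each preserving objectwise weak equivalences (total homotopy fibre, the section-bookkeeping of $t^+$ and $U^+$, precomposition along the diagonal and along the $(n{+}1)$-fold coproduct) and it sends the constant functor $\star$ to $\star$, an easy induction gives $\perp_{n+1}^{k+1}F\simeq\star$ for every $k\ge 0$. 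Hence $\perp_{n+1}^{*+1}F$ is levelwise weakly contractible, and since geometric realization carries a levelwise weak equivalence between such simplicial objects (after a cofibrant replacement if necessary) to a weak equivalence, $|\perp_{n+1}^{*+1}F|\simeq\star$; thus $\gamma_nF$ is a weak equivalence.

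For (2), given a degree $n$ functor $G$ and $\phi\colon F\to G$, apply $\Gamma_n$ and use naturality of $\gamma_n$ to get $\Gamma_n\phi\circ\gamma_nF=\gamma_nG\circ\phi$; by (1) the map $\gamma_nG$ is a weak equivalence, so $\bar\phi:=(\gamma_nG)^{-1}\circ\Gamma_n\phi$ satisfies $\bar\phi\circ\gamma_nF\simeq\phi$, giving the factorization through $(\Gamma_nF,\gamma_nF)$ up to weak equivalence. For uniqueness, suppose $h,h'\colon\Gamma_nF\to G$ satisfy $h\circ\gamma_nF\simeq h'\circ\gamma_nF$. Naturality of $\gamma_n$ at $h$ and at $h'$ gives $\gamma_nG\circ h=\Gamma_nh\circ\gamma_n\Gamma_nF$ and likewise for $h'$; as $\Gamma_nF$ is degree $n$ (Proposition~\ref{degPn}) and $G$ is degree $n$, (1) makes $\gamma_n\Gamma_nF$ and $\gamma_nG$ weak equivalences, so it suffices to show $\Gamma_nh\simeq\Gamma_nh'$, and applying $\Gamma_n$ to the hypothesis reduces this to the assertion that $\Gamma_n\gamma_nF\colon\Gamma_nF\to\Gamma_n\Gamma_nF$ is a weak equivalence. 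To prove that last assertion --- which I expect to be the main work --- I would use that $\Gamma_n$ commutes with geometric realization and with finite homotopy colimits, since $cr_{n+1}$, being a finite homotopy limit, does (exactly as in the proof of Proposition~\ref{degPn}); then $\Gamma_n$ takes the cofibre sequence $|\perp_{n+1}^{*+1}F|\to F\to\Gamma_nF$ to a cofibre sequence $\Gamma_n|\perp_{n+1}^{*+1}F|\to\Gamma_nF\xrightarrow{\Gamma_n\gamma_nF}\Gamma_n\Gamma_nF$, and $\Gamma_n|\perp_{n+1}^{*+1}F|\simeq|\,\Gamma_n(\perp_{n+1}^{*+1}F)\,|$. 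Each level $\perp_{n+1}^{k+1}F$ has the ``cofree'' form $(\Delta^*U^+)(N_k)$ with $N_k=(t^+\sqcup_{n+1})(\perp_{n+1}^kF)$, so the augmentation $|\perp_{n+1}^{*+1}(\perp_{n+1}^{k+1}F)|\to\perp_{n+1}^{k+1}F$ admits an extra degeneracy built from the unit of the adjunction (cf. the adjoint-pair lemma preceding Proposition~\ref{degPn}) and is a weak equivalence, whence $\Gamma_n(\perp_{n+1}^{k+1}F)\simeq\star$; realizing over $k$ gives $\Gamma_n|\perp_{n+1}^{*+1}F|\simeq\star$, so $\Gamma_n\gamma_nF$ is a weak equivalence and (2) follows. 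The delicate points are the two ``commuting past realization'' statements for $cr_{n+1}$ and the extra-degeneracy argument for cofree objects; because the functor categories carry no model structure, these have to be checked by hand rather than quoted from an abstract localization theorem.
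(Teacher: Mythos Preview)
Your proposal is essentially correct and follows the standard route; the paper actually omits the proof entirely, pointing to Proposition~1.18 of \cite{G3} and Lemma~2.11 of \cite{RB}, and what you have written is a faithful reconstruction of that argument in the present setting.

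A couple of remarks. For part~(1) your argument is clean and complete: $cr_{n+1}F\simeq\star$ forces $\perp_{n+1}F\simeq\star$, and since $\perp_{n+1}$ preserves objectwise weak equivalences and kills $\star$, an induction plus realization finishes it. For part~(2), existence is immediate from naturality of $\gamma_n$ and part~(1), as you say. For uniqueness you correctly isolate the crux as showing that $\Gamma_n\gamma_nF$ (not just $\gamma_n\Gamma_nF$) is a weak equivalence, and your extra-degeneracy argument for $\Gamma_n(\perp_{n+1}^{k+1}F)\simeq\star$ is exactly right: each $\perp_{n+1}^{k+1}F$ has the form $L(R\perp_{n+1}^kF)$ for the adjoint pair $(L,R)=(\Delta^*U^+,\,t^+\sqcup_{n+1})$, so the cotriple resolution of it is split. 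The two ``delicate points'' you flag --- that $cr_{n+1}$ (hence $\perp_{n+1}$, hence $\Gamma_n$) commutes with homotopy cofibres and with realization --- are precisely the facts the paper invokes without further comment in the proof of Proposition~\ref{degPn}, so you are operating at the same level of rigor; note that both rely on finite homotopy limits commuting with the relevant homotopy colimits in $\mathcal D$, which is unproblematic when $\mathcal D$ is stable but would need justification otherwise.
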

The proofs of these results are similar to those of Proposition 1.18 in \cite{G3} and Lemma 2.11 in \cite {RB}  and are omitted.

We end this section by defining natural transformations $\Gamma_nF\rightarrow \Gamma_{n-1}F$ that
allow us to assemble the $\Gamma_nF$s into a Taylor tower for $F$.  

\begin{defn}
Let $X$ be an object in $\Cf$.  We define the $(n+1)$-cube of $n$-tuples, ${\bf X'}: {\mathcal P}({\bf n})\rightarrow \Cf ^{\times n}$, as follows.  For $i\neq 1$ and $S\subseteq {\bf n+1}$, the  $i$th entry in ${\bf X'}(S)$ is 
\[
{\bf X'}(S)_i=\begin{cases}X&\text{if $\{2, i+1\}\cap S=\emptyset$}\\  B &\text{otherwise,}

\end{cases}
\]
and, for $i=1$, we have
\[ {\bf X'}(S)_1=\begin{cases} X\coprod _A X&\text{if $\{1,2\}\cap S=\emptyset$}\\
B\coprod _AB&\text{otherwise.}
\end{cases}
\]
The morphisms are all induced by the morphism $X\rightarrow B$.  
\end{defn}  

Recalling the definition of $(\sqcup ^{n+1})^{(X, X, \dots, X)}_B$ from Example \ref{e:chi}, it is easy to verify that there is a natural map of $(n+1)$-cubes $\tau _n: (\sqcup^{n+1})^{(X, X, \dots, X)}_B\rightarrow
\sqcup ^{n}{\bf X'}$.   Applying a functor $F$ and taking the total fiber of the resulting $(n+1)$-cubes, yields a natural map
$$
\rho _n: \perp _{n+1}F(X)\rightarrow \operatorname{tfiber}F(\sqcup^{n}{\bf X'}).
$$
But, by construction, letting ${\bf X''}$ denote the restriction of ${\bf X'}$ to $S\subseteq {\bf n+1}$
with $2\notin S$, we see that
\begin{align*}\operatorname{tfiber}F(\sqcup ^{n}{\bf X'})&\simeq \operatorname{hofiber}(\operatorname{tfiber}F({\sqcup}^{n}{\bf X''}(-))\rightarrow \operatorname{tfiber}F({\sqcup}^{n}{\bf X''}(-\cup \{2\}))\\
&\simeq \operatorname{hofiber}(\operatorname{tfiber}F({\sqcup}^{n}{\bf X''}(-))\rightarrow \star)\\
&\simeq \operatorname{tfiber}F({\sqcup}^{n}{\bf X''}(-)),
\end{align*}
and we have a natural map
\[
\iota _n: \operatorname{tfiber}F(\sqcup ^{n}{\bf X'})\buildrel \simeq\over \longrightarrow \operatorname{tfiber}F(\sqcup ^{n}{\bf X''}).
\]
Applying the fold map $+:Y\coprod _A Y\rightarrow Y$ (with $Y$ equal to $X$ or $B$) to the first pair of terms in the coproducts produces a natural map of $n$-cubes that yields
\[
\sigma _n: \operatorname{tfiber}F(\sqcup ^{n}{\bf X''})\rightarrow \perp_{n}F(X).
\]
We define $\nu _n:\perp _{n+1}F(X) \rightarrow \perp_{n}F(X)$ to be the natural 
composition 
$$
\nu _n=    \sigma _n      \circ   \iota _n   \circ \rho _n.
$$
This gives us a map of simplicial objects $\perp _{n+1}^*F\rightarrow \perp _n^*F$ that can be used to construct the natural transformation $q_nF: \Gamma_nF\rightarrow \Gamma_{n-1}F$.   With this map we obtain the desired Taylor tower.   
\begin{thm} \label{t:tower}There is a natural tower of functors:
\[ \xymatrix{ && F \ar[dl]_{\gamma_{n+1}}\ar[d]^{\gamma_n}\ar[dr]^{\gamma_{n-1}} \\
\cdots \ar[r] & \Gamma_{n+1}F \ar[r]^{q_{n+1}} & \Gamma_nF \ar[r]^{q_n\quad} & \Gamma_{n-1}F \ar[r] ^ \cdots \ar[r] & \Gamma_1F \ar[r] & \Gamma_0F.}\]
\end{thm}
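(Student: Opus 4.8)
The plan is to produce the connecting maps $q_nF\colon\Gamma_nF\to\Gamma_{n-1}F$ out of the natural transformation $\nu_n\colon\perp_{n+1}\to\perp_n$ built above, and then to assemble the $\gamma_nF$ and the $q_nF$ into the asserted diagram; all the genuinely new work lies in promoting $\nu_n$ to a comparison of the cotriple resolutions. First I would record that each of $\rho_n$, $\iota_n$, and $\sigma_n$ is natural in $F$: $\rho_n$ comes from applying $F$ to the map of $(n+1)$-cubes $\tau_n$ together with the iterated-fiber identification of Lemma~\ref{iteratedfiber}, $\iota_n$ is the chain of weak equivalences collapsing the ``$\{2\}$-direction'' (again via Lemma~\ref{iteratedfiber}), and $\sigma_n$ is induced by the fold maps $+\colon Y\amalg_A Y\to Y$; each is natural for formal reasons, so $\nu_n=\sigma_n\circ\iota_n\circ\rho_n$ is a natural transformation $\perp_{n+1}\Rightarrow\perp_n$, living a priori in $\mathrm{Ho}(\Fun(\Cf,\D))$ since $\iota_n$ is only a weak equivalence.

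The main step is to check that $\nu_n$ is compatible with the cotriple structures closely enough to induce, for every $F$, a map of augmented simplicial objects $\perp_{n+1}^{*+1}F\to\perp_n^{*+1}F$ lying over the augmentations to $F$. Concretely this means verifying that $\nu_n$ is a morphism of cotriples, $\epsilon_n\circ\nu_n=\epsilon_{n+1}$ and $\delta_n\circ\nu_n=(\nu_n\nu_n)\circ\delta_{n+1}$ (up to the natural weak equivalence $\iota_n$), after which the usual whiskering construction turns a morphism of cotriples into a map of the associated cotriple resolutions; equivalently, one checks the simplicial identities degreewise. In either case the computation is the same bookkeeping carried out for functors to abelian categories in \cite{RB}, with Lemma~\ref{iteratedfiber} playing the role that the direct-sum splitting plays there: one unwinds the counit and comultiplication of the adjoint pair $(\Delta^*\circ U^+,\,t^+\circ\sqcup_m)$ and traces the effect of $\tau_n$, of the fold maps, and of the iterated-fiber identifications on the relevant total fibers.

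Granting this, I would realize and pass to homotopy cofibers. The resulting map of cofiber sequences
\[
\xymatrix{
|\perp_{n+1}^{*+1}F| \ar[r]^{\widehat\epsilon} \ar[d]_{\nu_n} & F \ar@{=}[d] \ar[r]^{\gamma_nF} & \Gamma_nF \ar[d]^{q_nF} \\
|\perp_{n}^{*+1}F| \ar[r]^{\widehat\epsilon} & F \ar[r]^{\gamma_{n-1}F} & \Gamma_{n-1}F
}
\]
defines $q_nF$ by functoriality of $\operatorname{hocofiber}$, and the fact that the middle vertical map is the identity on $F$ forces $q_nF\circ\gamma_nF=\gamma_{n-1}F$, which is the commutativity of the triangles in the statement. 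Iterating over $n$ gives the tower, and since $\perp_m$, $|\,{\cdot}\,|$, $\operatorname{hocofiber}$, and $\nu_m$ are all natural in $F$, the whole tower is natural.

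The hard part is the middle step: obtaining an honest, or at least homotopy-coherent, comparison of the simplicial resolutions rather than a mere degreewise map, which is delicate precisely because $\nu_n$ was defined through the weak equivalence $\iota_n$ and so is a zig-zag, not a strict natural transformation of functors. The cleanest remedy is to invert $\iota_n$ once and for all in $\mathrm{Ho}(\Fun(\Cf,\D))$ and carry out the entire argument there, so that ``map of augmented simplicial objects over $F$'' and the identity $q_nF\circ\gamma_nF=\gamma_{n-1}F$ are statements in the homotopy category of functors --- which is exactly the level of precision that Theorem~\ref{t:tower} asks for.
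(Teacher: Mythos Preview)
Your approach is essentially the paper's: the paper simply asserts that $\nu_n$ ``gives us a map of simplicial objects $\perp_{n+1}^{*}F\to\perp_n^{*}F$'' and then defines $q_nF$ as the induced map on homotopy cofibers, exactly as in your displayed square. Your outline is a more careful elaboration of that one-sentence sketch.

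One small over-worry: you treat $\iota_n$ as a zig-zag that must be inverted, but in the paper $\iota_n$ is an honest natural map $\operatorname{tfiber}F(\coprod_n\mathbf{X}')\to\operatorname{tfiber}F(\coprod_n\mathbf{X}'')$ (coming from the iterated-fiber identification of Lemma~\ref{iteratedfiber}, which is an isomorphism, not merely a weak equivalence). Hence $\nu_n=\sigma_n\circ\iota_n\circ\rho_n$ is a strict natural transformation, and the passage to a map of simplicial objects does not require working in the homotopy category or managing homotopy coherence; this simplifies your ``hard part'' considerably.
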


%%%%%%%%%%%%%%%%%%%%%%%%%%%%%%%%%%%%%%%%%%%%%%%%

%%%%%%%%%%%%%%%%%%%%%%%%%%%%%%%%%%%%%%%%%%%%%%%%
\section{A comparison to Goodwillie's $n$-excisive approximation}
%%%%%%%%%%%%%%%%%%%%%%%%%%%%%%%%%%%%%%%%%%%%%%%%

We use this section to compare the degree $n$ approximation, $\Gamma _nF$, of a functor $F:\Cf\rightarrow\s$ to Goodwillie's $n$-excisive approximation, $P_nF$.   After reviewing the definition of $P_nF$, we first show that when $F$ commutes with realizations, 
$P_nF$ and $\Gamma_nF$ are weakly equivalent as functors from $\Cf$ to $\s$.    When $F$ 
does not commute with realizations, we also obtain agreement of the functors $\Gamma_nF$
and $P_nF$, but only when evaluated at the initial object of $\Cf$.   We conclude the section by 
using this fact to show that for an object $X$, the degree $n$ and $n$-excisive approximations
to $F$ at $X$ agree, but only after restricting the constructions (in a sense to be made clear later)
to $\C _{\beta:X\rightarrow B}$.   Throughout this section $\C$ and $\D$ are used to denote
simplicial model categories, and $\s$ is a suitable model of spectra as in section 4.  As in previous sections, for a morphism $f:A\rightarrow B$ in $\C$, we use $\Cf$ to denote the category of objects of $\C$ that factor $f$, and we use $\star$ to denote the initial/final object in $\s$.     While the proofs of the  two main theorems of 
the section require that our functors take values in $\s$, some of the lemmas used to prove them are stated and proved more generally for functors from $\C$ to $\D$.

\subsection{Goodwillie's $n$-excisive approximations}  

To define the $n$-excisive approximation to a functor $F$, we make use of the following
construction for objects $X$ in $\Cf$.

\begin{defn}  For a finite set $U$ of cardinality $u$,  let ${\mathcal D}_U$ be the poset obtained from  $\P(U)$ by restricting to the 
empty set and one element subsets of $U$.   That is, the set of objects of ${\mathcal D}_U$  is $\{\emptyset, \{t\}\ |\ t\in U\}.$   For an object $X$ in $\Cf$ with $\beta:X\rightarrow B$, we define a functor $X_U:{\mathcal D}_U\rightarrow \Cf$ by 
\[X_U(S)=\begin{cases}X&\ \text{if}\  S=\emptyset,\\ B&\ \text{if}\  S\neq \emptyset, \end{cases}
\]
and  $X_U(\emptyset \rightarrow \{t\})=\beta$.  
We define $B\otimes _X-$ to be the functor from finite sets to 
$\Cf$ that for the finite set $U$ is given by 
\[
B\otimes_X U:=\underset{S\in {\mathcal D}_U}{\rm hocolim}  X_U(S).
\]
\end{defn}

\begin{rem}{\label {r:tensor}} Suppose that $U$ has $u$ elements.  
\begin{enumerate}   
\item If we assume that  $\beta: X\rightarrow B$ is a cofibration, then $B\otimes_X U$  is the coproduct over $X$ of $u$ copies of $B$.    In particular, if $U$ is the empty set, then
$B\otimes _X U\simeq X$, and if $U$ has a single element, then $B\otimes _XU\simeq B$.

\item Let ${\mathcal U}_Y$ be the category of  unbased spaces over a fixed space $Y$ used by  Goodwillie. The initial and final objects are $\emptyset$ and $Y$, respectively.  When $X$ is an object  in ${\mathcal U}_Y$, the space $Y\otimes _{X} U$ is equivalent to the fibrewise join of $X$ with $U$ over $Y$, denoted $X*_Y U$, that Goodwillie uses to define $P_nF$.   
Recall that $X*_YU$ is defined as
\[{\rm hocolim}(X\leftarrow X\times U\rightarrow Y\times U).
\]
One can see that $Y\otimes_{X}U$ and $X*_YU$ are equivalent either by directly 
comparing the homotopy colimits used to define them or by noting that both constructions yield 
$u$ copies of the mapping cone of $X\rightarrow Y$ identified together along a single 
copy of $X$.
\end{enumerate}   
\end{rem}
To define $P_nF(X)$ for a functor of ${\mathcal U}_Y$ and object $X$ in ${\mathcal U}_Y$, Goodwillie defines an intermediate functor $T_nF$, by
\[
T_nF(X):=\underset {U\in \P_0({\bf n+1})}{\rm holim} F(X*_Y U).
\]
There is a  natural map $F(X)\simeq F(X*_Y \emptyset)\rightarrow T_nF(X)$,
and iterating yields a sequence
\[F(X)\rightarrow T_nF(X)\rightarrow T_n^2F(X)\rightarrow T_n^3F(X)\rightarrow \dots .\]
The functor $P_nF$ is defined as the homotopy colimit of this sequence: 
\[
P_nF(X):={\rm hocolim}_k T_n^kF(X).
\]  See section 1 of \cite{G3} for more details.  

In light of Remark \ref{r:tensor}.2, we extend Goodwillie's definition of $P_nF$ to functors from $\Cf$ to $\D$ as follows.
\begin{defn}\label{d:Pn}
Let $F:\Cf \rightarrow \D$, and let $X$ be an object in $\Cf$ (with canonical map $\beta: X\rightarrow B$).   Let $T_nF(X)$ be defined by 
\[T_nF(X):=\underset{U\in \P_0({\bf n+1})}{\rm holim} F(B\otimes _X U).
\]
Then $P_nF(X)$ is given by 
\[P_nF(X):={\rm hocolim}_k(T_n^kF(X)).
\]
\end{defn}
\noindent N. Kuhn  has also defined $P_nF$ for functors of pointed simplicial or topological model categories (\cite {Kuhn}).   
As an immediate consequence of Definition \ref{d:Pn}, we have the next lemma. 
\begin{lem}{\label{l:pn}}For $F$ a functor from $\C_f$ to $\s$, the construction $P_n$ satisfies the following properties:\begin{enumerate}
\item Let $\{F_i\}, {i\in {I}},$ be a finite diagram of functors from $\Cf$ to $\s$.  Then 
\[\underset{i\in {I}} {\rm holim}P_nF_i\simeq P_n(\underset {i\in {I}}{\rm holim}F_i)\]
as functors from $\Cf$ to $\s$.

\item Let $G_{\cdot}$ be a simplicial object in the category of functors from $\Cf$ to $\s$.   Then
\[P_n|G_{\cdot}|\simeq |P_nG_{\cdot}|
\] 
where $P_n$ is applied levelwise to $G_{\cdot}$.  
\item  Cofibration sequences of functors from $\Cf$ to $\s$ are preserved by $P_n$.  
\item For $F:\Cf \rightarrow {\mathcal D}$, there is natural transformation $F\rightarrow P_nF$.  When $F$ is $n$-excisive, this is a weak equivalence.  
\end{enumerate}  \end{lem}
\begin{proof}
The first part of the lemma follows from the definition of $P_n$ and the facts that 
homotopy limits commute  and filtered countable homotopy colimits commute with finite homotopy limits in $\s$. 
To see that the second part is true, note that the fact that homotopy colimits commute tells us that for an object $X$ in $\Cf$,
\[|P_nG_{\cdot}(X)|:= |{\rm hocolim}_kT^k_nG_{\cdot}(X)|\simeq {\rm hocolim}_k|T^k_n G_{\cdot}(X)|.\]
So, it remains to show that 
\[|T^k_nG_{\cdot}(X)|\simeq T^k_n|G_{\cdot}|(X),
\]
where, by definition,  $T^k_n|G_{\cdot}|(X)$ is the finite homotopy limit of a diagram of spectra.   In general, 
homotopy limits do not commute with (fat) realizations, but in this case we have a finite homotopy limit in $\s$. The third part is proved using similar arguments.  

The natural transformation of the fourth part is the transformation from $F$, the initial object 
of the sequence defining $P_nF$, into the homotopy colimit of that sequence.   To understand what happens when $F$ is $n$-excisive, note that for an object $X$, the $(n+1)$-cube
\[
U\in \P({\bf n+1}) \mapsto B\otimes _XU
\]
is a strongly cocartesian diagram.   Applying $F$ yields a cartesian diagram, and so we have 
an equivalence
\[F(B\otimes _X\emptyset)\buildrel \simeq\over \longrightarrow \underset{U\in \P_0({\bf n+1})} {\rm holim}F(B\otimes _XU),
\]  
but this is simply $F(X)\buildrel\simeq\over\longrightarrow T_nF(X)$.  The result follows.   \end{proof}

%%%%%
\subsection{Functors that commute with realizations}
%%%%%%%

Given a functor $F:\Cf \rightarrow \s$ we seek to show that $P_nF$ and $\Gamma _nF$ agree as functors of $\Cf$ when $F$ commutes with realizations.   This is achieved via the next theorem.

\begin{thm}  {\label {t:commute}}  Let $F:\Cf \rightarrow \s$ be a functor that commutes with realizations.   Then there is a (co)fibration sequence of functors
\[
|\perp _{n+1}^{*+1}F|\rightarrow F\rightarrow P_nF.
\]
\end{thm}
\begin{proof}
Consider the cofibration sequence used to define $\Gamma_nF$:
\[|\perp _{n+1}^{*+1}F|\rightarrow F\rightarrow \Gamma _nF.
\]
Applying $P_n$ to the cofibration sequence and using the natural transformation of Lemma \ref{l:pn}.4  gives us the commutative diagram below:
\begin{equation}\label{e:fibration}\xymatrix{|\perp _{n+1}^{*+1}F| \ar[d] \ar[r] &F \ar[d] \ar[r] &\Gamma _nF \ar[d]\\
P_n|\perp _{n+1}^{*+1}F| \ar[r] &P_nF \ar[r] &P_n\Gamma _nF.
}
\end{equation}
Both rows of this diagram are cofibration sequences, the first by definition, and the second because $P_n$ preserves cofibrations.    Consider the top row of the diagram.   By definition,
$\perp _{n+1}F(X)={\rm tfiber}(F(\sqcup ^n)_B^{(X,\dots, X)}))$.   Since $\perp_{n+1}F$ is formed by a finite homotopy inverse limit, and since $F$ commutes with realizations and finite homotopy inverse limits in $\s$ commute with realizations, $\perp_{n+1}F$ commutes with realizations as well. As a result, we see that each functor in the top row commutes with realizations.   In particular, $\Gamma _nF$ commutes with realizations, and so,
by Propositions \ref{p:realizations} and \ref{degPn}, $\Gamma _nF$ is  $n$-excisive.   By Lemma \ref{l:pn}.4, the rightmost map in (\ref{e:fibration}) is an equivalence.    This tells us that the 
square
\[\xymatrix{|\perp _{n+1}^{*+1}F| \ar[d] \ar[r] &F \ar[d] \\
P_n|\perp _{n+1}^{*+1}F| \ar[r] &P_nF 
}
\]
is cocartesian.
%Thus,
%\[|\perp _{n+1}^{*+1}F|\simeq {\rm holim}\left (F\rightarrow P_nF\leftarrow P_n|\perp _{n+1}^{*+1}F|\right ).
%\]
To finish the proof, we show  $P_n|\perp _{n+1}^{*+1}F|\simeq \star$, the initial/final object in $\s$.    To do so, note that  Lemma \ref{l:pn}.2 gives us 
\[P_n|\perp _{n+1}^{*+1}F|\simeq |P_n\perp _{n+1}^{*+1}F|,
\]
and so it is enough to show that $P_n\perp _{n+1}^{*+1}F\simeq \star$
by proving that  $P_n\perp _{n+1}F\simeq \star.$   We do so in Corollary \ref{l:perpreduced} below.   
\end{proof}

To prove Corollary \ref{l:perpreduced}, we use the following lemma, which is Lemma 3.1 of  \cite{G3}. 
In stating the lemma we  use the notion of weakly $n$-reduced functors and the functor $\Delta^*$ from 
Definitions \ref{d:weaklyreduced} and \ref{d:sqcup}.
\begin{lem}
Let $G:\Cfn n\rightarrow {\mathcal D}$ be a weakly $n$-reduced functor.   Then $P_{n-1}\Delta^*G\simeq \star.$
\end{lem}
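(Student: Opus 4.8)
The plan is to adapt Goodwillie's proof of \cite[Lemma~3.1]{G3} to the present setting. I would begin with the case $n=1$, which is a direct computation and exhibits the mechanism: a weakly $1$-reduced $G$ has $\Delta^*G=G$ and $G(B)\simeq\star$, and since $P_0({\bf 1})$ has the single object $\{1\}$ and no nonidentity morphisms, Remark~\ref{r:tensor}.1 gives $T_0\Delta^*G(X)=G(B\otimes_X\{1\})\simeq G(B)\simeq\star$. Hence $T_0\Delta^*G$ is weakly equivalent to the constant functor $\star$, as then are all of its iterates, and $P_0\Delta^*G=\hocolim_k T_0^k\Delta^*G\simeq\star$.

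For general $n$ I would argue that every homogeneous layer of $\Delta^*G$ in degrees $0,1,\dots,n-1$ is trivial, which forces $P_{n-1}\Delta^*G\simeq\star$. This I would deduce from the multivariable calculus of \cite{G3}: the $k$th homogeneous layer of the diagonal $\Delta^*G$ of a functor of $n$ variables is built out of the multihomogeneous layers $D_{(k_1,\dots,k_n)}G$ of $G$ with $k_1+\dots+k_n=k$, and a functor reduced in its $i$th variable has $D_{(k_1,\dots,k_n)}G\simeq\star$ whenever $k_i=0$. Since $G$ is reduced in all $n$ of its variables, the only multidegrees that can contribute have $k_i\ge 1$ for every $i$, forcing $k\ge n$; so the $k$th layer of $\Delta^*G$ vanishes for $k\le n-1$.

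The part I expect to be the real work is therefore not the combinatorics above but the re-establishment, for functors $\Cfn n\to\s$, of the multivariable statements being quoted --- existence of the multivariable Taylor tower, the layer decomposition of a diagonal, and the vanishing of any multihomogeneous layer that is constant in some variable. This is where the standing hypotheses enter: that $\C$ is a simplicial model category with functorial cofibrant replacement supplies the cofibrant cubical diagrams and bar constructions of Lemma~\ref{l:Bar}, and stability of $\s$ lets one pass freely between the fibre and cofibre descriptions of $T_{n-1}$ and of its layers, and between $\tfiber$ and $\hocolim$ of a cube, so that Goodwillie's proofs transcribe with only notational changes.

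Failing a full development of the multivariable theory, one can at least reduce the excisive degree by hand: the $(n-1)$-variable functor $\bar G(X_1,\dots,X_{n-1}):=G(X_1,\dots,X_{n-1},X_{n-1})$ is again weakly reduced in each variable (if some $X_i=B$ with $i\le n-2$ then a variable of $G$ equals $B$; if $X_{n-1}=B$ then the last two variables of $G$ equal $B$) and satisfies $\Delta^*\bar G=\Delta^*G$, so induction on the number of variables gives $P_{n-2}\Delta^*G\simeq\star$ at once; it then remains only to kill the single layer $\hofib(P_{n-1}\Delta^*G\to P_{n-2}\Delta^*G)$, which is the multilinear, cross-effect heart of the matter and is handled by the same multidegree count applied to $cr_{n-1}\Delta^*G$.
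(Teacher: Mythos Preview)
Your $n=1$ case is correct and in fact already exhibits the full mechanism. For general $n$, however, you take a detour through the multivariable layer decomposition of \cite{G3}, which you correctly identify as requiring substantial re-establishment in this setting; your fallback induction via $\bar G$ still leaves the final layer to that same machinery. Both routes also lean on stability of the target, whereas the lemma is stated for a general pointed $\mathcal D$.

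The paper's argument is a direct extension of your $n=1$ computation and avoids all of this. For each $X$ consider the auxiliary $n$-cube $\widetilde G(X)$ with $\widetilde G(X)(U) = G(X_1(U),\dots,X_n(U))$, where $X_i(U)=X$ if $i\notin U$ and $X_i(U)=B$ if $i\in U$. There is a map of $n$-cubes $\widetilde G(X)\to \widetilde TG(X)$, where $\widetilde TG(X)(U)=\Delta^*G(B\otimes_X U)$ is the cube whose punctured holim defines $T_{n-1}\Delta^*G(X)$; on the $i$th coordinate the map is either the canonical $X\to B\otimes_X U$ or the inclusion $B\to B\otimes_X U$ of the $i$th copy. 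The structure map $\Delta^*G(X)\to T_{n-1}\Delta^*G(X)$ then factors as
\[
\widetilde G(X)(\emptyset)\longrightarrow \holim_{U\in P_0({\bf n})}\widetilde G(X)(U)\longrightarrow \holim_{U\in P_0({\bf n})}\widetilde TG(X)(U).
\]
Since $G$ is weakly $n$-reduced, every $\widetilde G(X)(U)$ with $U\neq\emptyset$ has at least one coordinate equal to $B$ and is therefore $\simeq\star$, so the middle object is $\simeq\star$. Applying $T_{n-1}^k$ (which sends $\star$ to $\star$) shows that every map in the sequence defining $P_{n-1}\Delta^*G$ factors through $\star$, and hence the homotopy colimit is $\simeq\star$. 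No layer analysis, no multivariable tower, and no stability hypothesis is needed.
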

\begin{proof}
We prove this by first showing that the natural transformation from $\Delta^*G$ to $T_{n-1}\Delta^*G$ factors through $\star$.   To do so,  for an object $X\in \Cf$, we consider the $n$-cube $\widetilde G(X)$:
\[
U\in \P({\bf n})\mapsto \widetilde G(X)(U)=G(X_1(U), X_2(U), \dots, X_n(U))
\]
where 
\[X_i(U)=\begin{cases}B\otimes_X\emptyset\simeq X &\ \text{if}\ i\notin U\\
B\otimes _X\{i\}\simeq B&\ \text{if}\ i\in U.\end{cases}
\]
We also consider the $n$-cube $\widetilde TG(X)$ used to define $T_{n-1}\Delta^*G(X)$:
\[
U\in \P({\bf n})\mapsto \widetilde TG(X)(U)=\Delta^*G(B\otimes_XU).
\]
The inclusions $\{i\}\rightarrow U$ and $\emptyset\rightarrow U$ induce a map of 
$n$-cubes $\widetilde G(X)\rightarrow \widetilde TG(X)$.  Moreover, the map
$\Delta^*G(X)\rightarrow T_{n-1}\Delta^*G(X)$ factors as 
\begin{align*}
\Delta^*G(X)\simeq \widetilde G(X)(\emptyset)&\rightarrow \underset {U\in \P_0({\bf n})}{\rm holim}
\widetilde G(X)(U)\\
&\rightarrow \underset{U\in \P_0({\bf n})} {\rm holim}\widetilde TG(X)(U)\simeq T_{n-1}\Delta^*G(X).
\end{align*}
Since $G$ is weakly $n$-reduced, $\widetilde G(X)(U)\simeq \star$ for $U\neq \emptyset$,
and so we see that $\Delta^*G(X)\rightarrow T_{n-1}\Delta^*G(X)$ factors through $\star$.  In a similar fashion, we can show that $T_{n-1}^k\Delta^G(X)\rightarrow T_{n-1}^{k+1}\Delta^G(X)$
factors through $\star$ for all $k$ and obtain the result.  
\end{proof}
Noting that $\perp_{n+1}F=\Delta^* cr_{n+1}F$ where $cr_{n+1}F$ is a weakly $(n+1)$-reduced functor (by Corollary \ref{c:crnreduced}), we obtain the desired corollary.  
\begin{cor}{\label{l:perpreduced}}  For a functor $F:\Cf\rightarrow {\mathcal D}$, $P_n\perp _{n+1}F\simeq \star$.  
\end{cor}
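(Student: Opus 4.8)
The plan is to deduce this directly from the preceding lemma (Goodwillie's Lemma 3.1), applied with $n$ replaced by $n+1$. First I would recall from Remark \ref{perp} that $\perp_{n+1}F = \Delta^*(cr_{n+1}F)$, where by Remark \ref{perp}(1) we have $cr_{n+1}F = (t\circ\sqcup_{n+1})(F)$; that is, $cr_{n+1}F = t(\sqcup_{n+1}F)$ for the functor $\sqcup_{n+1}F$ of $n+1$ variables. Thus $cr_{n+1}F$ is a functor of $n+1$ variables of the form $tH$.

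The key observation is then that $cr_{n+1}F$ is weakly $(n+1)$-reduced. This is exactly Lemma \ref{tHreduced}, which asserts that $tH$ is weakly $n$-reduced for any $H$; taking $H = \sqcup_{n+1}F$ (a functor of $n+1$ variables) gives that $cr_{n+1}F = t(\sqcup_{n+1}F)$ is weakly $(n+1)$-reduced in the sense of Definition \ref{d:weaklyreduced}. I would spell this out just enough to confirm the indexing matches: a functor of $n+1$ variables is weakly $(n+1)$-reduced exactly when it is $\simeq \star$ whenever one of its $n+1$ arguments equals $B$, and this is what Lemma \ref{tHreduced} delivers.

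With this in hand, the corollary is immediate: apply the preceding lemma to the weakly $(n+1)$-reduced functor $G = cr_{n+1}F$ of $n+1$ variables, obtaining $P_{(n+1)-1}\Delta^*(cr_{n+1}F) \simeq \star$, i.e. $P_n\Delta^*(cr_{n+1}F) = P_n\perp_{n+1}F \simeq \star$. There is no real obstacle here beyond bookkeeping: the only point requiring care is making sure the ``$n$'' in the statement of the preceding lemma is matched against ``$n+1$'' (number of variables of $cr_{n+1}F$), so that its conclusion about $P_{n-1}$ becomes the statement about $P_n$ that we need.
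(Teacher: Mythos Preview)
Your proposal is correct and matches the paper's own argument essentially verbatim: the paper simply notes that $\perp_{n+1}F=\Delta^* cr_{n+1}F$ with $cr_{n+1}F$ weakly $(n+1)$-reduced, and then invokes the preceding lemma. Your version is slightly more explicit in citing Lemma~\ref{tHreduced} (via $cr_{n+1}F = t(\sqcup_{n+1}F)$) to justify the weak reducedness, but the approach and indexing bookkeeping are identical.
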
   

The following corollary is an immediate
consequence of Theorem 6.5.

\begin{cor}
Let $F: \Cf\rightarrow \s$ be a functor that commutes with realizations.   Then $P_nF$ and 
$\Gamma_nF$ are weakly equivalent as functors from $\Cf $ to $\s$.  
\end{cor}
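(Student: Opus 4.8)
The plan is to read this off directly from Theorem~\ref{t:commute} together with the stability of $\s$. Recall that in a stable model category a sequence $X'\to X\to Y$ is a fibration sequence if and only if it is a cofibration sequence; in particular, if $X'\to X\to Y$ is a fibration sequence then $Y$ is canonically weakly equivalent to $\operatorname{hocofiber}(X'\to X)$.

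First I would invoke Theorem~\ref{t:commute} to obtain the fibration sequence of functors
\[
|\perp_{n+1}^{*+1}F|\xrightarrow{\ \widehat\epsilon\ }F\rightarrow P_nF .
\]
Since $F$ takes values in $\s$, this is simultaneously a cofibration sequence, and hence
\[
P_nF\ \simeq\ \operatorname{hocofiber}\bigl(|\perp_{n+1}^{*+1}F|\xrightarrow{\widehat\epsilon}F\bigr).
\]
On the other hand, by the very definition of $\Gamma_nF$,
\[
\Gamma_nF\ =\ \operatorname{hocofiber}\bigl(|\perp_{n+1}^{*+1}F|\xrightarrow{\widehat\epsilon}F\bigr),
\]
so $P_nF$ and $\Gamma_nF$ are weakly equivalent; as every construction and equivalence used is natural in $F$, this is an equivalence of functors $\Cf\to\s$.

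The one point requiring care — and essentially the only obstacle — is to check that the map $|\perp_{n+1}^{*+1}F|\to F$ appearing in the fibration sequence of Theorem~\ref{t:commute} is literally the map $\widehat\epsilon$ from the defining cofibration sequence of $\Gamma_nF$, and not merely some map with weakly equivalent domain and codomain. This is visible from the proof of Theorem~\ref{t:commute}: the fibration sequence there is extracted from the left-hand square of diagram~(\ref{e:fibration}), whose top row is exactly $|\perp_{n+1}^{*+1}F|\xrightarrow{\widehat\epsilon}F$, after observing that $P_n|\perp_{n+1}^{*+1}F|\simeq\star$. With this identification in hand the corollary is immediate.
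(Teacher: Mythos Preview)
Your proposal is correct and follows essentially the same route as the paper: the paper observes that since $F$ takes values in spectra, the fibration sequence of Theorem~\ref{t:commute} is also a cofibration sequence, and then invokes the definition of $\Gamma_nF$ as $\operatorname{hocofiber}(|\perp_{n+1}^{*+1}F|\xrightarrow{\widehat\epsilon}F)$ to conclude. Your extra care in verifying that the map in Theorem~\ref{t:commute} is literally $\widehat\epsilon$ is a welcome point of rigor that the paper leaves implicit.
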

%%%%%%%%%
\subsection{Functors that do not commute with realizations} 
%%%%%%%%%%%

If $F$ does not commute with realizations, the functors $P_nF$ and $\Gamma _nF$ no longer
agree on all objects in $\Cf$, but they still agree at the initial object $A$.   We establish this 
fact in the next theorem.

\begin{thm} \label{t:agreeatA}  Let $F:\Cf \rightarrow \s$ where $\Cf$ is the category of objects factoring the morphism $f:A\rightarrow B$.   Then $\Gamma _nF(A)\simeq P_nF(A)$.   
\end{thm}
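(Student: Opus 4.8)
The plan is to run the comparison one skeletal stage at a time, turning the defining cofibre sequence of $\Gamma_nF$ into the fibre sequences of the appendix and exploiting that $\s$ is stable. Recall that $\Gamma_nF=\operatorname{hocofiber}(|\perp_{n+1}^{*+1}F|\to F)$, with the map $\widehat{\epsilon}$; that the geometric realization of a simplicial object is the homotopy colimit of its skeletal filtration, with transition maps $|\sk_k(\perp_{n+1}^{*+1}F)|\hookrightarrow|\sk_{k+1}(\perp_{n+1}^{*+1}F)|$ compatible with $\widehat{\epsilon}$; and that in $\Fun(\Cf,\s)$ realizations, homotopy cofibres and homotopy colimits are all computed objectwise. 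So the first step is to evaluate at the initial object $A$ and commute the homotopy cofibre past the skeletal homotopy colimit:
\[
\Gamma_nF(A)\ \simeq\ \hocolim_k\operatorname{hocofiber}\bigl(|\sk_k(\perp_{n+1}^{*+1}F)(A)|\longrightarrow F(A)\bigr).
\]

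Next I would identify each term of this colimit using Proposition \ref{prop:alldim}, which asserts that for every $k\geq 0$,
\[
|\sk_k(\perp_{n+1}^{*+1}F)(A)|\longrightarrow F(A)\longrightarrow T_n^{k+1}F(A)
\]
is a homotopy fibre sequence in $\s$. Since $\s$ is stable, this is at the same time a homotopy cofibre sequence, so $\operatorname{hocofiber}(|\sk_k(\perp_{n+1}^{*+1}F)(A)|\to F(A))\simeq T_n^{k+1}F(A)$ compatibly under $F(A)$; and, as $k$ increases, these identifications should intertwine the skeletal inclusions with the structure maps $T_n^{k+1}F(A)\to T_n^{k+2}F(A)$ of the tower defining $P_n$ in Definition \ref{d:Pn} (this compatibility is part of what the appendix's inductive proof furnishes). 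Passing to the homotopy colimit over $k$ and reindexing — discarding the initial term $T_n^0F(A)=F(A)$ leaves the colimit unchanged — then yields
\[
\Gamma_nF(A)\ \simeq\ \hocolim_k T_n^{k+1}F(A)\ \simeq\ \hocolim_k T_n^kF(A)\ =\ P_nF(A).
\]

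The real content, and the step I expect to be the main obstacle, is Proposition \ref{prop:alldim} itself, i.e.\ the identification of the homotopy fibre of $F(A)\to T_n^{k+1}F(A)$ with $|\sk_k(\perp_{n+1}^{*+1}F)(A)|$. Its $k=0$ case is Lemma \ref{l:Tn}, which comes down to observing that the strongly cocartesian $(n+1)$-cube $U\mapsto B\otimes_AU$ — which, since $f$ is a cofibration, is the cube $(\coprod_{n+1})^A_{(B,\dots,B)}=(\coprod_{n+1})^{(A,\dots,A)}_B$ of Example \ref{e:Upsilon} and Remark \ref{r:tensor} — has total fibre $\perp_{n+1}F(A)$, while $T_nF(A)$ is the homotopy limit of $F$ over its punctured version, so that $\perp_{n+1}F(A)\simeq\operatorname{hofib}(F(A)\to T_nF(A))$. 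The inductive passage from $\sk_{k-1}$ to $\sk_k$ is harder: it must keep track of how adjoining a new nondegenerate simplex of the simplicial object $\perp_{n+1}^{*+1}F$ interacts with the iterated functor $T_n^{k}$. Granting Proposition \ref{prop:alldim}, all the remaining steps — commuting evaluation at $A$ past realizations and homotopy cofibres, the cofinality of the reindexed tower, and the interchange of fibre and cofibre sequences in the stable category $\s$ — use only the standard facts about homotopy (co)limits recorded in Section 2 together with the exactness of filtered homotopy colimits in $\s$, and are routine.
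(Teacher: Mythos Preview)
Your argument is correct, but it is not the route the paper takes in Section~6. The paper's own proof applies $P_n$ to the defining cofibre sequence $|\perp_{n+1}^{*+1}F|\to F\to\Gamma_nF$, evaluated at $A$, to obtain the ladder \eqref{e:fibrationA}; then it shows the right vertical map $\Gamma_nF(A)\to P_n\Gamma_nF(A)$ is an equivalence because $\Gamma_nF$ is degree $n$ (Lemma~\ref{l:Fdegreen}, which in turn uses only the base case Lemma~\ref{l:Tn}), and that $P_n|\perp_{n+1}^{*+1}F|(A)\simeq\star$ because $P_n\perp_{n+1}F\simeq\star$ (Corollary~\ref{l:perpreduced}). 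A short cartesian-square argument then gives the fibre sequence $|\perp_{n+1}^{*+1}F(A)|\to F(A)\to P_nF(A)$ all at once.

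Your approach instead filters by skeleta and invokes the full strength of Proposition~\ref{prop:alldim} at each stage before passing to the colimit. This is exactly the ``alternative approach'' the appendix advertises. The tradeoff is clear: the paper's proof is lighter---it needs only the $k=0$ case of the appendix together with two global facts about $P_n$---whereas your argument imports the harder inductive content of Proposition~\ref{prop:alldim}, but in exchange yields the finer information that each finite skeletal stage already matches a finite $T_n$-iterate. The one point you flag but do not verify, namely that the equivalences $\operatorname{hocofiber}(|\sk_k\perp_{n+1}^{*+1}F(A)|\to F(A))\simeq T_n^{k+1}F(A)$ intertwine the skeletal inclusions with the $T_n$-tower maps, does indeed come out of the appendix's inductive construction (the square in Figure~\ref{fig:cartsq-k} is built under $F(A)$), so your outline is sound.
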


We prove this theorem in a manner similar to that of Theorem \ref{t:commute} and its corollary.   In particular, we use the commutative diagram (\ref{e:fibration}) evaluated at $A$, 
\begin{equation}\label{e:fibrationA}\xymatrix{|\perp _{n+1}^{*+1}F(A)| \ar[d] \ar[r] &F(A) \ar[d] \ar[r] &\Gamma _nF(A) \ar[d]\\
P_n|\perp _{n+1}^{*+1}F(A)| \ar[r] &P_nF(A) \ar[r] &P_n\Gamma _nF(A).
}
\end{equation}
In this case, we will use the following lemmas to prove that the right vertical 
map is an equivalence and that $P_n|\perp _{n+1}^{*+1}F(A)|\simeq \star$.   The second  lemma is a consequence of the key observation in the first lemma.   

\begin{lem}\label{l:Tn}
Let $F:\Cf \rightarrow \s$.   Then
\[\perp _{n+1}F(A)\rightarrow F(A)\rightarrow T_nF(A)
\]
is a fibration sequence in $\s$.
\end{lem}
A generalization of this result for $T_n^kF(A)$, $k\geq 1$,  is the main result of  Appendix B.  
\begin{proof} Recall our assumption that $A\buildrel f\over\rightarrow B$ is a cofibration.  
Because we are evaluating at the initial object in $\Cf$, the coproducts used to define $T_nF(A)$ are taken over $A$, as are the coproducts used in the $(n+1)$-cubical diagram whose total fiber defines $\perp _{n+1}F$.   In particular, $\perp_{n+1}F(A)$ is the total fiber of the $(n+1)$-cubical diagram that assigns the object $F(A_1(S) \coprod\dots \coprod A_{n+1}(S))$ to the set $S$ where
\[A_i(S)=\begin{cases}A &\ \text{if}\ i\notin S\\
B&\ \text{if}\ i\in S.\end{cases}
\]
Since the coproducts are taken over $A$,  letting $s$ denote the cardinality of $S$, we have \[A_1(S)\coprod \dots \coprod A_n(S)\simeq \overbrace{B\coprod_A\dots\coprod_AB}^{s\ \text{times}}
\simeq  B\otimes _A S.\]
Then, by Remark \ref{r:tfiber}, 
\begin{align*}
\perp _{n+1}F(A)&\simeq{\rm tfiber}\left (S\in \P({\bf n+1})\mapsto F( A_1(S)\amalg \dots \amalg A_{n+1}(S))  \right )\\
&\simeq {\rm tfiber}\left (S\in \P({\bf n+1})\mapsto F(B\otimes _A S)\right )\\
&= {\rm hofiber}\left (F(B\otimes _A \emptyset)\rightarrow {\rm holim}\left ( S\in \P_0({\bf n+1}) \mapsto F(B\otimes _A S)  \right)\right)\\
&\simeq  {\rm hofiber}\left (F(A)\rightarrow T_nF(A)   \right),
\end{align*}
as desired.  

\end{proof}
\begin{lem}\label{l:Fdegreen}
If $F:\Cf \rightarrow \s$ is a degree $n$ functor, then $F(A)\rightarrow P_nF(A)$ is a weak equivalence.  
\end{lem}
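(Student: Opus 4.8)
The plan is to bootstrap from Lemma \ref{l:Tn}, generalizing the classical observation that an $n$-excisive functor satisfies $F\simeq T_nF\simeq P_nF$ to the weaker hypothesis ``degree $n$''. Since $F$ is degree $n$ we have $cr_{n+1}F\simeq\star$, and hence $\perp_{n+1}F=\Delta^*cr_{n+1}F\simeq\star$ as a functor; in particular $\perp_{n+1}F(A)\simeq\star$. Feeding this into the fibration sequence $\perp_{n+1}F(A)\to F(A)\to T_nF(A)$ of Lemma \ref{l:Tn}, and using that $\s$ is stable (so a map whose homotopy fiber is contractible is a weak equivalence), we conclude that $F(A)\to T_nF(A)$ is a weak equivalence. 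By Definition \ref{d:Pn} and Lemma \ref{l:pn}.4 the natural map $F(A)\to P_nF(A)$ is the composite $F(A)\to T_nF(A)\to T_n^2F(A)\to\cdots$ followed by the canonical map into $\hocolim_k T_n^kF(A)=P_nF(A)$, so it suffices to show that every map $T_n^kF(A)\to T_n^{k+1}F(A)$ in this tower is a weak equivalence; then $F(A)\to P_nF(A)$ is a weak equivalence, being the map into the homotopy colimit of a tower of weak equivalences.

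To handle the higher stages I would apply Lemma \ref{l:Tn} to each functor $G_k:=T_n^kF$ in turn, getting fibration sequences $\perp_{n+1}G_k(A)\to G_k(A)\to T_nG_k(A)=G_{k+1}(A)$, so the claim reduces to showing $\perp_{n+1}(T_n^kF)(A)\simeq\star$ for all $k\geq 0$. The main ingredient for this is that $T_n$ preserves the property of being degree $n$ (equivalently, by Proposition \ref{p:degreen}, the property of being $n$-excisive relative to $A$): granting this, $T_n^kF$ is degree $n$ for every $k$ by induction, so $\perp_{n+1}(T_n^kF)=\Delta^*cr_{n+1}(T_n^kF)\simeq\star$ as before, and we are done.

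The preservation statement --- that $T_nG$ is degree $n$ whenever $G$ is --- is the heart of the argument and the step I expect to be the main obstacle. One proves it by analyzing $cr_{n+1}(T_nG)$: since $cr_{n+1}$ is a finite homotopy limit (a total fiber) and $T_nG(X)=\holim_{U\in P_0({\bf n+1})}G(B\otimes_X U)$ is built from homotopy limits, commuting homotopy limits (Lemma \ref{l:holim}) moves the total fiber past the $\holim$ over $U$, reducing the problem to showing that for each fixed nonempty $U$ the $(n+1)$-cube obtained by applying $G$ to $S\mapsto B\otimes_{(\coprod_{n+1})^{\bf X}_B(S)}U$ is cartesian. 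For singleton $U$ this cube is constant at $G(B)$, with trivial total fiber; for larger $U$ one uses that $B\otimes_{(-)}U$ preserves homotopy-cocartesian squares, hence strongly cocartesian cubes, together with the hypothesis that $G$ is $n$-excisive relative to $A$ and Lemma \ref{l:cartesian}, handling the initial vertex $B\otimes_A U$ by the same comparison with constant subcubes used in the proof of Lemma \ref{l:degreen}. Alternatively, one can bypass the preservation lemma and invoke Proposition \ref{prop:alldim} of the appendix directly: because $F$ is degree $n$, the simplicial object $\perp^{*+1}_{n+1}F$ and each of its skeleta are levelwise contractible, so $|\sk_k(\perp^{*+1}_{n+1}F)(A)|\simeq\star$, and the homotopy fiber sequence $|\sk_k(\perp^{*+1}_{n+1}F)(A)|\to F(A)\to T_n^{k+1}F(A)$ forces $F(A)\to T_n^{k+1}F(A)$ to be a weak equivalence for every $k$, whence $F(A)\simeq P_nF(A)$.
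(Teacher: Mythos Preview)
Your proposal is correct and follows essentially the same route as the paper. The paper's proof also starts from Lemma~\ref{l:Tn} to get $F(A)\simeq T_nF(A)$, then iterates by arguing that $T_nF$ is again degree $n$; the paper dispatches this second step in one line, writing $cr_{n+1}T_nF\simeq T_ncr_{n+1}F\simeq\star$ ``since $T_n$ and $cr_{n+1}$ are both homotopy inverse limit constructions, and homotopy inverse limits commute'' --- precisely the commutation you identify and unpack. Your elaboration (analyzing the cubes $S\mapsto B\otimes_{(\coprod_{n+1})^{\bf X}_B(S)}U$ and the alternative via Proposition~\ref{prop:alldim}) goes beyond what the paper writes, but note that the appendix proof of Proposition~\ref{prop:alldim} itself invokes the same commutation $\perp_{n+1}T_n^kF(A)\simeq T_n^k\perp_{n+1}F(A)$, so that alternative does not truly bypass the preservation step.
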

%\begin{proof}  
%Since $F$ is degree $n$, $\perp _{n+1}F(A)\simeq \star$, and by Lemma \ref{l:Tn}, 
%$F(A)\simeq T_nF(A)$.   Since $T_n$ preserves equivalences, this implies that 
%$$F(A)\buildrel\simeq\over\rightarrow {\rm hocolim}_k(T_n^kF(A))= P_nF(A).$$
%
\begin{proof}  
Since $F$ is degree $n$, $cr_{n+1}F\simeq \star$.   Thus, $\perp _{n+1}F(A)\simeq \star$, and by Lemma \ref{l:Tn}, 
$F(A)\simeq T_nF(A)$.  Moreover, since $T_n$ and $cr _{n+1}$ are both homotopy inverse limit constructions,
and homotopy inverse limits commute, 
\[cr _{n+1}T_nF\simeq T_ncr _{n+1}F\simeq \star.
\]
Hence, $T_nF$ is also a degree $n$ functor and by Lemma \ref{l:Tn}
\[T_nF(A)\simeq T_n^2F(A).
\]
Continuing in this fashion, we see  that 
$$F(A)\buildrel\simeq\over\rightarrow {\rm hocolim}_k(T_n^kF(A))= P_nF(A).$$
\end{proof}
%\end{proof}

With this, we prove Theorem \ref{t:agreeatA}.

\begin{proof}   We prove this theorem by proving that we have a fibration sequence of spectra
\[ |\perp_{n+1}^{*+1}F(A)|\rightarrow F(A)\rightarrow P_nF(A).
\]
Consider diagram (\ref{e:fibrationA}).  Following the same strategy as in the proof of Theorem \ref{t:commute} we see that it suffices to show that the rightmost vertical arrow is an equivalence and that the object in the bottom left corner is equivalent to $\star$.     The first fact is a consequence of Lemma \ref{l:Fdegreen}.  To prove the second fact, we note that  as was the case for Theorem \ref{t:commute},  $P_n|\perp _{n+1}^{*+1}F|\simeq |P_n\perp _{n+1}^{*+1}F|$
since $F$ takes values in $\s$, and so it is enough to show that $P_n\perp _{n+1}F(A)\simeq \star$.   This 
was done in Corollary \ref{l:perpreduced}.   The result  follows.
\end{proof}

The proof of Theorem \ref{t:agreeatA} relies on the critical observation in the proof of Lemma \ref{l:Tn} that in order to obtain agreement between $\perp_{n+1}F$ and the fiber of $F\rightarrow T_nF$
we must evaluate at the same object over which the coproducts for $\perp_{n+1}F$ are taken. 
This suggests that for general $F$, the cotriple construction can be used in place of Goodwillie's construction only when evaluating at the initial object of the domain category.
In fact, when evaluated at other objects, $\Gamma_nF$ and $P_nF$ can differ greatly. 

\begin{ex} Consider the functor $H_1:{\mathcal U}_*\rightarrow \s$ that takes a space $X$ to the Eilenberg-Mac Lane spectrum associated to its first (singular) homology group.   By Theorem \ref{t:agreeatA}, we know that $\Gamma_1H_1(\emptyset)\simeq P_1H_1(\emptyset)$.   For an arbitrary space $X$, one can show that $\perp_2H_1(X)\simeq \star$, and hence, $\Gamma_1H_1(X)\simeq H_1(X)$.   However, if $X$ is a connected space, then 
\[T_1H_1(X)\simeq {\rm holim}\left (H_1(*\otimes _{X} \{1\})\rightarrow H_1(*\otimes_X\{1,2\})\leftarrow H_1(*\otimes _X\{2\})\right  ) \simeq \star\]
since $*\otimes_{X}\{1\}$ and $*\otimes_{X}\{2\}$ are equivalent to the cone on $X$ and 
$*\otimes_X\{1,2\}$ is equivalent to the (unreduced) suspension of $X$.  As a result, one sees that $P_1H_1(X)\simeq \star$ when $X$ is connected, so $\Gamma_1H_1$ and $P_1H_1$ are not equivalent as functors.      One can obtain similar results for higher homology groups. \end{ex}
 
 This example shows that we cannot guarantee in general that  $\Gamma_nF$ and $P_nF$ agree as functors.  However,  we can show that the particular value $P_nF(X)$  is equivalent to the $n$th term in a cotriple Taylor tower for $F$, albeit not the same tower as used in Theorem \ref{t:agreeatA}.   To do so, we change our focus from $\Cf$, which has a fixed initial and fixed terminal object, to the category $\C_{/B}$ of objects over $B$.  To state the result
 precisely we use the following notation.   
 
 An object of the category $\C_{/B}$ of objects over $B$ is a morphism $\beta:X\to B$ in $\C$. 
Given a functor $F:\C_{/B} \rightarrow \s$, we can restrict it to the category $\C _{\beta}$ determined by objects that factor $\beta:X\to B$ in $\C$.   We use $F^{\beta}$ to denote
this restriction.   We can define the cross effects of $F^{\beta}$ using $X$ as our initial object,  and $B$ as our final object.   We denote these cross effects and the associated cotriples by $cr_n^{\beta}F$ and $\perp_{n}^{\beta}F$, respectively.   Note that even if $\C_{/B}$ had an initial object $A$ (so that it is secretly a category $\Cf$ for some fixed $f:A\to B$), $cr_n^{\beta}F$ is not 
the restriction of $cr_nF$ to $\C_{\beta}$.   For example, for an object $Z$ in $\C_{\beta}$, 
$cr_2F(Z,Z)$ is the total fiber of the 
diagram 
\[
\xymatrix{F(Z\coprod _A Z)\ar[r] \ar[d] & F(Z\coprod _A B)\ar[d]\\
F(B\coprod _A Z)\ar[r]&F(B\coprod_A B)}
\]
whereas $cr_2^{\beta}F(Z,Z)$ is the total fiber of 
\[
\xymatrix{F(Z\coprod _XZ)\ar[r] \ar[d]& F(Z\coprod _X B)\ar[d] \\
F(B\coprod _X Z)\ar[r]  &F(B\coprod _X B).}
\]
We use $\Gamma _{n}^{\beta}F$ to denote the homotopy cofiber of 
\[
|(\perp _{n+1}^{\beta})^{*+1}F|\rightarrow F.
\]

%For an arbitrary object $X$ in $\Cf$, we are particularly interested in $\Gamma _n^{\beta}F$ where $\beta:X\rightarrow B$ is
%the unique map from $X$ to the final object $B$.  
As indicated above, using the map $\beta$ 
in place of $f$ to define our cross effects changes the construction $\Gamma _nF$ to
$\Gamma_n^{\beta}F$.   
However,
this is not the case for $P_nF$ as its definition requires a specific initial object (the empty set, or the point in the base pointed case) and prohibits restriction to the category $\C_{\beta}$.
%(Of course, defining $P_nF$ in terms of $\tau:X\rightarrow Y$ when
%$Y\neq B$ is a different matter.)   
In light of this, Theorem \ref{t:agreeatA} can be restated to obtain equivalences between $\Gamma_n^\beta F(X)$ and $P_nF(X)$.  Moreover, this equivalence is functorial in $X$.
\begin{thm}{\label{t:lastthm}}
Let $F:\C_{/B}\rightarrow \s$  and let $\beta:X\to B$ %and $\beta':Y\to B$ 
be an object in $\C_{/B}$.
\begin{enumerate}
\item There is a functor $\Gamma_n^{(-)}F: \C_{/B} \to \s$ taking $\beta:X\to B$ to $\Gamma_n^{\beta}F(X)$.
\item    There is a natural weak equivalence
\[
\Gamma _n^{\beta}F(X)\simeq P_nF(X).
\]
\end{enumerate}
\end{thm}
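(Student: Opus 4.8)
The plan is to reduce both parts to the case already settled in Theorem~\ref{t:agreeatA}, applied with $\C_\beta$ in the role of $\Cf$. The first point to record is that $\C_\beta$ --- the full subcategory of $\C_{/B}$ on the objects $X\to W\to B$ factoring $\beta$ --- is itself a category of the type $\C_g$ of Section~2.1, with $g=\beta\colon X\to B$, initial object $X$ and terminal object $B$; consequently $cr_n^\beta$, $\perp_n^\beta$ and $\Gamma_n^\beta F$ are nothing but the $cr_n$, $\perp_n$, $\Gamma_n F$ of Sections~3 and~5 for this category, and every ingredient of the proof of Theorem~\ref{t:agreeatA} --- Proposition~\ref{degPn}, Lemma~\ref{l:pn}, Lemma~\ref{l:Tn}, Lemma~\ref{l:Fdegreen} and Corollary~\ref{l:perpreduced} --- holds verbatim after replacing $\Cf$ by $\C_\beta$ and $A$ by $X$. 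The only identification that requires a word is the one occurring along the bottom row of the analogue of diagram~(\ref{e:fibrationA}): since $B\otimes_X U=\hocolim_{S\in\mathcal D_U}X_U(S)$ depends only on the object $\beta\colon X\to B$ of $\C_{/B}$ and not on the ambient category, one gets $T_nF^\beta(X)=T_nF(X)$ and, iterating, $T_n^kF^\beta(X)=T_n^kF(X)$, hence $P_nF^\beta(X)=P_nF(X)$, where on the right $T_n$ and $P_n$ are Goodwillie's constructions applied to $F\colon\C_{/B}\to\s$ as in Definition~\ref{d:Pn} and Remark~\ref{r:tensor}.2. Granting this, running the argument of Theorem~\ref{t:agreeatA} line by line yields a fibration sequence (hence, in $\s$, a cofibration sequence) $|(\perp_{n+1}^\beta)^{*+1}F^\beta(X)|\to F(X)\to P_nF(X)$, and comparing with the defining cofibration sequence $|(\perp_{n+1}^\beta)^{*+1}F^\beta|\to F^\beta\to\Gamma_n^\beta F$ evaluated at $X$ produces the weak equivalence $\Gamma_n^\beta F(X)\simeq P_nF(X)$ of part~(2).

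For part~(1) I would first observe that $P_nF$ is visibly a functor $\C_{/B}\to\s$: a morphism $g\colon X\to X'$ over $B$ induces a natural transformation $X_U\Rightarrow X'_U$ of diagrams over $\mathcal D_U$ (compatibly with $\beta=\beta'g$), hence $B\otimes_X U\to B\otimes_{X'}U$ natural in $U$, hence $T_nF(X)\to T_nF(X')$ and, on passing to the homotopy colimit, $P_nF(X)\to P_nF(X')$. The functor $\Gamma_n^{(-)}F$ is then obtained by transporting this functor structure across the equivalence of part~(2); the legitimacy of this hinges on that equivalence being natural in $\beta$. For that I would appeal to the appendix: Proposition~\ref{prop:alldim}, applied to $\C_\beta$, provides for each $k$ a homotopy fiber sequence $|\sk_k((\perp_{n+1}^\beta)^{*+1}F^\beta)(X)|\to F(X)\to T_n^{k+1}F(X)$ in which the last two terms and the connecting map are restrictions of functors defined on all of $\C_{/B}$; taking homotopy fibers and then the colimit over $k$ identifies $|(\perp_{n+1}^\beta)^{*+1}F^\beta(X)|$ with $\hofib(F(X)\to P_nF(X))$ naturally in $\beta$, and therefore $\Gamma_n^\beta F(X)=\operatorname{hocofiber}\bigl(|(\perp_{n+1}^\beta)^{*+1}F^\beta(X)|\to F(X)\bigr)\simeq\operatorname{hocofiber}\bigl(\hofib(F(X)\to P_nF(X))\to F(X)\bigr)\simeq P_nF(X)$, stably and naturally in $\beta$; this simultaneously supplies the functor of part~(1) and upgrades part~(2) to naturality.

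The step I expect to be the main obstacle is precisely this functoriality of $\beta\mapsto\Gamma_n^\beta F(X)$. The naive approach --- transporting the natural transformation $F\to P_nF$ through the cotriple resolutions at the level of the index categories --- does not work, because $\beta\mapsto\C_\beta$ is not functorial in a cotriple-compatible way: the pushout functor $g_!\colon\C_\beta\to\C_{\beta'}$ preserves coproducts and initial objects but sends $B$ to $X'\coprod_X B$ rather than to $B$, while the restriction $g^*\colon\C_{\beta'}\to\C_\beta$ preserves $B$ but not coproducts, so neither intertwines $\perp_{n+1}^\beta$ with $\perp_{n+1}^{\beta'}$. The way around this --- and the reason the argument closes --- is the observation already underlying Lemma~\ref{l:Tn} and its appendix generalization: evaluated at the initial object, every object of $\D$ occurring in the resolution $(\perp_{n+1}^\beta)^{*+1}F^\beta(X)$ is, up to weak equivalence, $F$ applied to an iterated fibrewise join $B\otimes_X(-)$ or a homotopy limit of such, and each of these depends only on $\beta$ as an object of $\C_{/B}$. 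Thus the functoriality is read off from the $P_n$-side rather than manufactured on the $\Gamma_n$-side, and once (1) is set up this way, (2) amounts to the assertion that the transported structure is compatible, which is built into the construction.
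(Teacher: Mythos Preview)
Your argument for part~(2) is correct and is exactly what the paper does: apply Theorem~\ref{t:agreeatA} with $\C_\beta$ in place of $\Cf$, after noting that $P_nF^\beta(X)=P_nF(X)$ because $B\otimes_X U$ depends only on $\beta$.

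For part~(1), however, you take a genuinely different and more circuitous route than the paper, and your dismissal of the direct approach is mistaken. The paper simply observes that a morphism $\sigma\colon X\to Y$ over $B$ induces, by the universal property of pushouts, maps
\[
\underbrace{B\coprod_X\cdots\coprod_X B}_{s}\ \longrightarrow\ \underbrace{B\coprod_Y\cdots\coprod_Y B}_{s}
\]
for every $s$, compatibly with the structure maps of the cubes defining $\perp_{n+1}^\beta F(X)$ and $\perp_{n+1}^{\beta'}F(Y)$; this already gives a map of $(n+1)$-cubes, hence of total fibers, and iterating gives maps $(\perp_{n+1}^\beta)^kF(X)\to(\perp_{n+1}^{\beta'})^kF(Y)$ compatible with the simplicial structure. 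That is the whole content of the paper's one-sentence proof of~(1).

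Your objection---that neither $g_!$ nor $g^*$ intertwines $\perp_{n+1}^\beta$ with $\perp_{n+1}^{\beta'}$ as cotriples on the functor categories---is a red herring. You are asking for a change-of-base functor between $\C_\beta$ and $\C_{\beta'}$ that commutes with the entire cotriple machinery, which is indeed too much. But functoriality of $\beta\mapsto\Gamma_n^\beta F(X)$ requires much less: only maps between the \emph{values} $(\perp_{n+1}^\beta)^kF(X)$, and since every object appearing in these iterated cross effects evaluated at the initial object is of the form $F(B\otimes_X T)$ for some finite set $T$, the induced maps $B\otimes_X T\to B\otimes_Y T$ are all one needs. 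Your own last paragraph essentially rediscovers this observation, but routes it through the appendix and through $P_n$ rather than invoking it directly.

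Your approach---establishing functoriality by transporting it from $P_nF$ via Proposition~\ref{prop:alldim}---does work, and has the virtue of making the naturality of the equivalence in~(2) manifest. But it is considerably heavier than necessary, and relies on a result proved later in the paper.
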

\begin{proof}  Let $\beta:X\to B$ and $\beta':Y\to B$ in $\C_{/B}$.  The functoriality of $\Gamma_n^{(-)}F$ comes from the fact that a map $\sigma:X\to Y$ in $\C_{/B}$ induces enough maps between coproducts over $X$ and $Y$, respectively, to induce maps between $\perp_n^{\beta}F(X)$ and $\perp_n^{\beta'}F(Y)$.  The rest follows from Theorem \ref{t:agreeatA}.
\end{proof}

%%%%%%%%%%%%%%%%%%%%%%%%%%%%%%%%%%%%%%%%%%%%
\section{Convergence of the cotriple tower}

%%%%%%%%%%%%%%%%%%%%%%%%%%%%%%%%%%%%%%%%%%%%%

This section identifies criteria that guarantee convergence of our tower.   Our first  goal is to show that our tower converges for analytic functors, just as Goodwillie's tower does.   In the second part of this section we also identify  conditions  on cross effects that guarantee convergence.  In this section we work with functors $F$ whose target category $\s$ is a category of spectra. Since analyticity requires a notion  of connectivity in both the domain and target categories,  we restrict ourselves to the setting of Goodwillie's calculus of homotopy functors for the results concerning analyticity.   In particular, we let ${\mathcal T}$ denote the category of  topological spaces and ${\mathcal T}_{g}$ denote the category determined by the morphism $g:C\rightarrow D$.  For the results in the second part of this section, we will work with functors 
$F:\Cf\rightarrow \s$ where $\C$ is a simplicial model category and  $\Cf$ is determined by a fixed morphism  $f:A\rightarrow B$ in $\C$.    Throughout this section we use $P_{\infty}F$ to denote the homotopy inverse limit of the Goodwillie tower for $F$ and $\Gamma_{\infty}F$ to denote the homotopy inverse limit of the cotriple tower.   We say that the Goodwillie tower for $F$ {\it converges at X} if the natural map $F(X)\rightarrow P_{\infty}F(X)$ is a weak equivalence.   Convergence of the cotriple tower is defined analogously.

\subsection{Analyticity and Convergence}   

As in \cite{G2}, by a $k$-connected map of spaces we mean a map  whose homotopy fibers are all $(k-1)$-connected.    
Recall the following definitions from \cite{G2}.

\begin{defn}  (\cite{G2}, 1.3)  The $n$-cubical diagram ${\mathcal X}$ in ${\mathcal T}$ or $\s$ is $k$-cartesian provided that the map from ${\mathcal X}(\emptyset)$ to $\holim _{S\in {\mathcal P}_0({\bf n})}{\mathcal X}(S)$ is $k$-connected.   
\end{defn}

\begin{defn} (\cite{G2}, 4.1)  A functor $F:{\mathcal T}\rightarrow \s$ is {\it stably $n$-excisive}, if the following is true for some numbers $c$ and $\kappa$:

\noindent $E_n(c,\kappa)$: If ${\mathcal X}: {\mathcal P}({\bf n+1})\rightarrow \Cf$ is any strongly cocartesian $(n+1)$-cube such that for all $s\in {\bf n+1}$ the map ${\mathcal X}(\emptyset)\rightarrow {\mathcal X}(\{s\})$ is $k_s$-connected and $k_s\geq \kappa$, then the diagram $F({\mathcal X})$ is $(-c+\sum k_s)$-cartesian.
\end{defn}

\begin{defn} (\cite{G2}, 4.2)
A functor $F:{\mathcal T}\rightarrow \s$ is $\rho$-analytic if there is some number $q$ such that $F$ satisfies $E_n(n\rho-q,\rho +1)$ for all $n\geq 1$.
\end{defn}

In \cite{G3}, Goodwillie showed that his tower for $F$ converges at $X$ when $F$ is $\rho$-analytic and $X\rightarrow D$ is at least $(\rho +1)$-connected.     We establish a similar result for our tower below, using the next lemma.   

\begin{lem}
If $F:{\mathcal T}_g\rightarrow \s$ is $\rho$-analytic and $\beta: X\rightarrow D$ is at least $(\rho +1)$-connected, then $\perp _{n+1}^kF(X)$ is at least $(q+(n+1)^k-1)$-connected.   Here $q$ is the constant such that $F$ satisfies $E_n(n\rho -q, \rho+1)$.   
\end{lem}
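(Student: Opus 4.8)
The plan is to induct on $k$, using the characterization of $\perp_{n+1}F(X)$ as the total fiber of the $(n+1)$-cube $S \mapsto F(B\otimes_X S)$ (well, $D\otimes_X S$ in the current notation, using $\beta: X \to D$) that appeared in the proof of Lemma \ref{l:Tn}, together with Goodwillie's estimate $E_n$. First I would record the base case $k=1$. Here $\perp_{n+1}F(X) = \operatorname{tfiber}(S \in P(\mathbf{n+1}) \mapsto F(D\otimes_X S))$. The cube $S \mapsto D\otimes_X S$ is strongly cocartesian (Remark \ref{r:tensor}.1 identifies $D\otimes_X S$ as an iterated coproduct of copies of $D$ over $X$, and each square face is a pushout), and each initial edge $X = D\otimes_X\emptyset \to D\otimes_X\{s\} \simeq D$ is the map $\beta$, which is $(\rho+1)$-connected by hypothesis. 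Since $F$ is $\rho$-analytic it satisfies $E_{n}(n\rho - q, \rho+1)$ (applied with the cube of dimension $n+1$, so the relevant instance is $E_n(n\rho-q,\rho+1)$ in Goodwillie's indexing where the cube has $n+1$ vertices of the form $\{s\}$), hence $F$ applied to this cube is $(-(n\rho-q) + \sum_{s=1}^{n+1} k_s)$-cartesian with each $k_s \geq \rho+1$; taking $k_s = \rho+1$ gives that the cube is $(q - n\rho + (n+1)(\rho+1)) = (q + \rho + 1)$-cartesian, so its total fiber is $(q+\rho)$-connected, which is at least $(q + (n+1) - 1) = (q+n)$-connected since $\rho \geq n$ is not automatic — I would double check the exact arithmetic here, since the claimed bound is $q + (n+1)^k - 1$, and for $k=1$ this is $q+n$; one needs the $E_n$ estimate to deliver at least this, which it does provided $\rho \geq 1$, i.e.\ the connectivity gain from each of the $n+1$ edges at least compensates.

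Next, for the inductive step, suppose $\perp_{n+1}^k F(X)$ is $(q + (n+1)^k - 1)$-connected for every $X$ with $\beta: X \to D$ at least $(\rho+1)$-connected. I want to bound $\perp_{n+1}^{k+1}F(X) = \perp_{n+1}(\perp_{n+1}^k F)(X)$. The key point is that $\perp_{n+1}^k F$ is again a functor on $\mathcal{T}_g$, and $\perp_{n+1}(\perp_{n+1}^k F)(X)$ is the total fiber of the $(n+1)$-cube $S \mapsto (\perp_{n+1}^k F)(D\otimes_X S)$. The vertices of this cube are $\perp_{n+1}^k F$ evaluated at $D\otimes_X S$; for $S \neq \emptyset$ the map $D\otimes_X S \to D$ is highly connected (in fact $D\otimes_X\{s\} \simeq D$ is the terminal object), and for $S = \emptyset$ it is $\beta$ itself — in all cases $D\otimes_X S \to D$ is at least $(\rho+1)$-connected, so the inductive hypothesis applies at each vertex and tells us each vertex $\perp_{n+1}^k F(D\otimes_X S)$ is $(q + (n+1)^k - 1)$-connected. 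The total fiber of an $(n+1)$-cube all of whose vertices are $c$-connected is itself highly connected; more precisely, I would use the standard fact (essentially \cite{G2}, 1.1–1.2, combined with connectivity estimates for $\operatorname{holim}$ of punctured cubes) that $\operatorname{tfiber}$ of an $(n+1)$-cube with all vertices $c$-connected is at least... and here one needs to be careful: simply bounding vertices does not immediately give the $(n+1)^{k+1}$ growth. The better approach is to use that $\perp_{n+1}^k F$ is itself the total fiber of an $(n+1)$-cube of $\perp_{n+1}^{k-1}F$'s, so that $\perp_{n+1}^{k+1}F(X)$ is a total fiber of an $(n+1)$-cube each of whose vertices is a total fiber of an $(n+1)$-cube, and iterate the $E_n$ estimate directly: unwinding, $\perp_{n+1}^{k+1}F(X)$ is the iterated total fiber over $(P(\mathbf{n+1}))^{k+1}$ of $F$ evaluated at suitable iterated $D\otimes_X(-)$ constructions, and applying $E_n$ at the innermost stage with all connectivities at least $\rho+1$ yields the bound by an explicit count of how the connectivity multiplies by $(n+1)$ at each of the $k+1$ total-fiber operations.

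**The main obstacle** will be getting the connectivity bookkeeping exactly right across the $k$ iterations — in particular justifying that each application of $\perp_{n+1}$ multiplies the connectivity by a factor of $n+1$ (plus lower-order corrections) rather than merely adding a constant, so that one arrives at $(n+1)^k$ rather than $k(n+1)$. This requires choosing the right formulation: I would phrase the induction so that at each stage I am taking the total fiber of an $(n+1)$-cube all of whose edges (not just vertices) have controlled connectivity, and invoke Goodwillie's estimate that the total fiber of a strongly cocartesian $(n+1)$-cube with $s$-th edge $k_s$-connected, run through an $\rho$-analytic functor, has connectivity governed by $-c + \sum k_s$; chaining this, the edges at stage $j$ inherit connectivity roughly $(n+1)^{j-1}$ from stage $j-1$, so the sum $\sum_{s=1}^{n+1} k_s$ is about $(n+1) \cdot (n+1)^{j-1} = (n+1)^j$. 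A careful induction on $k$ with this edge-connectivity hypothesis, rather than a vertex-connectivity hypothesis, should close the argument cleanly; the arithmetic constant $q$ is absorbed by the analyticity hypothesis $E_n(n\rho - q, \rho+1)$ and does not grow with $k$, which is consistent with the stated bound $q + (n+1)^k - 1$.
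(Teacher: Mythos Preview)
Your proposal has two genuine gaps.

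First, the identification $\perp_{n+1}F(X)=\operatorname{tfiber}\bigl(S\mapsto F(D\otimes_X S)\bigr)$ is incorrect for a general object $X$. That description, taken from Lemma~\ref{l:Tn}, is special to the initial object: the coproducts defining $\perp_{n+1}$ are taken over the initial object $C$ of $\mathcal{T}_g$, not over $X$. For arbitrary $X$ the correct cube is $S\mapsto F\bigl(Y_1(S)\amalg_C\cdots\amalg_C Y_{n+1}(S)\bigr)$ with $Y_i(S)=X$ or $D$ according to whether $i\notin S$ or $i\in S$. This cube is still strongly cocartesian with $(\rho+1)$-connected initial edges (each edge is a cobase change of $\beta$), so the base case can be repaired, but your inductive step cannot use the $D\otimes_X(-)$ description.

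Second, and more seriously, the induction does not close. To pass from $k$ to $k+1$ you need to control $\perp_{n+1}\bigl(\perp_{n+1}^kF\bigr)(X)$, which is a total fiber of $\perp_{n+1}^kF$ evaluated at objects $W_T=\amalg_C\,Y_j(T)$. Applying the inductive hypothesis at $W_T$ requires the structural map $W_T\to D$ to be $(\rho+1)$-connected, but there is no reason for this (e.g.\ $D\amalg_C D\to D$ need not be highly connected). Alternatively, applying the $E_n$ estimate directly to $\perp_{n+1}^kF$ would require that functor to be $\rho$-analytic, which is not given. Your ``chaining'' idea, where edge connectivities multiply by $n+1$ at each stage, conflates connectivities in the source cube (which stay $(\rho+1)$) with connectivities of iterated total fibers, and does not yield a valid estimate.

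The paper's argument avoids all of this by unfolding the $k$ iterations at once: $\perp_{n+1}^kF(X)$ is the total fiber of $F$ applied to a single strongly cocartesian $(n+1)^k$-cube in $\mathcal{T}_g$, whose vertices are coproducts over $C$ of copies of $X$ and $D$, and whose $(n+1)^k$ initial edges are each cobase changes of $\beta$, hence $(\rho+1)$-connected. One then invokes analyticity \emph{once}, using $E_{(n+1)^k-1}\bigl(((n+1)^k-1)\rho-q,\rho+1\bigr)$ (recall $\rho$-analyticity supplies $E_m$ for \emph{every} $m$, not just the fixed $n$), to get the bound directly. The exponential $(n+1)^k$ comes from the dimension of the cube, not from chaining estimates.
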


\begin{proof}  Since $\perp _{n+1}F(X)$ is the total fiber of a strongly cocartesian $(n+1)$-cube in ${\mathcal T}_g$, $\perp ^k_{n+1}F(X)$ can be described as the total fiber of a strongly cocartesian $(n+1)^k$-cube ${\mathcal Y}$.   For each $S\in {\mathcal P}({\bf (n+1)^k})$, ${\mathcal Y}(S)$ is 
a coproduct of copies of $X$ and $D$ over $C$ and each morphism is a coproduct of copies of $\beta$ and identity morphisms.   In particular, this ensures that ${\mathcal Y}(\emptyset)\rightarrow {\mathcal Y}(\{s\})$ is at least $(\rho +1)$-connected for each 
$s\in {\bf (n+1)^k}$.   
The analyticity condition then guarantees that the total fiber of $F({\mathcal Y})$ is at least $(q+(n+1)^k-1)$-connected.   The result follows.  
\end{proof}

\begin{prop}
If $F:{\mathcal T}_g\rightarrow \s$ is $\rho$-analytic, satisfying $E_n(n\rho -q, \rho+1)$ for each $n$, and $X\rightarrow B$ is at least $(\rho+1)$-connected, then $F(X)\rightarrow \Gamma _nF(X)$ is at least $(q+n+1)$-connected.    As a consequence, 
$F(X)\simeq \Gamma_{\infty}F(X)$.  
\end{prop}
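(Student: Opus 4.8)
The plan is to read off a connectivity estimate for $F(X)\to\Gamma_nF(X)$ from the cofibration sequence defining $\Gamma_nF$ together with the previous lemma, and then pass to the homotopy inverse limit. First I would invoke the cofibration sequence $|\perp_{n+1}^{*+1}F|\to F\to\Gamma_nF$ from Section 5; evaluating at $X$ and using that $\s$ is stable, this is also a fibration sequence, so the homotopy fiber of $F(X)\to\Gamma_nF(X)$ is $|\perp_{n+1}^{*+1}F(X)|$. By the previous lemma, in simplicial degree $j$ the term $\perp_{n+1}^{j+1}F(X)$ of this simplicial spectrum is at least $(q+(n+1)^{j+1}-1)$-connected, a lower bound that is strictly increasing in $j$.

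Next I would estimate the connectivity of the realization via its skeletal filtration: the $j$th subquotient $\sk_j/\sk_{j-1}$ of the skeletal filtration of $|\perp_{n+1}^{*+1}F(X)|$ is $\Sigma^j$ of the cofiber of the $j$th latching map $L_j\perp_{n+1}^{*+1}F(X)\to\perp_{n+1}^{j+1}F(X)$. In degree $0$ the latching object is $\star$, so this subquotient is $\perp_{n+1}F(X)$, which is $(q+n)$-connected; for $j\geq 1$ the $j$th latching object is built from the strictly more highly connected lower-degree terms, so the relevant cofiber is still at least $(q+(n+1)^{j})$-connected and, after $\Sigma^j$, at least $(q+(n+1)^j+j)$-connected, which exceeds $q+n$. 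Hence the least connected graded piece is the degree $0$ one, so $|\perp_{n+1}^{*+1}F(X)|$ is $(q+n)$-connected and therefore $F(X)\to\Gamma_nF(X)$ is at least $(q+n+1)$-connected, in the connectivity convention fixed at the start of the section.

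For the consequence I would compare consecutive stages of the tower. From $F(X)\to\Gamma_nF(X)$ being $(q+n+1)$-connected and $F(X)\to\Gamma_{n+1}F(X)$ being $(q+n+2)$-connected, the fiber sequence relating the homotopy fibers of these two composites with the homotopy fiber of $q_{n+1}F(X):\Gamma_{n+1}F(X)\to\Gamma_nF(X)$ forces $q_{n+1}F(X)$ to be at least $(q+n+1)$-connected as well. Thus for each fixed $i$ the tower $\{\pi_i\Gamma_nF(X)\}_n$ is eventually constant equal to $\pi_iF(X)$, so the $\lim^{1}$ term in the Milnor sequence for $\Gamma_\infty F(X)=\holim_n\Gamma_nF(X)$ vanishes and $F(X)\to\Gamma_\infty F(X)$ is an isomorphism on all homotopy groups, hence a weak equivalence.

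The one point requiring care is the connectivity bound for $|\perp_{n+1}^{*+1}F(X)|$: one must check that the degenerate simplices (the latching objects) entering the filtration quotients do not erode the estimate. This works precisely because $\mathrm{conn}(\perp_{n+1}^{j+1}F(X))$ grows rapidly with $j$, so each higher-degree contribution, suspended $j$ times, is far more connected than the degree-zero piece $\perp_{n+1}F(X)$, whose connectivity is pinned down to be at least $q+n$ by the previous lemma.
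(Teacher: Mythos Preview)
Your argument is correct and follows the same line as the paper: the homotopy fiber of $F(X)\to\Gamma_nF(X)$ is $|\perp_{n+1}^{*+1}F(X)|$, and bounding its connectivity below by $q+n$ gives the stated $(q+n+1)$-connectivity of the map. The paper does this in one line by invoking the standard fact that geometric realization of a simplicial spectrum preserves a uniform connectivity bound (``homotopy colimits preserve connectivity''), whereas you unwind the skeletal filtration explicitly.

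One small slip in that unwinding: for $j\geq 2$ the latching object $L_j$ still receives the degree-$0$ term $\perp_{n+1}F(X)$ via iterated degeneracies, so $L_j$ is only guaranteed to be $(q+n)$-connected, not ``strictly more highly connected.'' Consequently the cofiber of $L_j\to\perp_{n+1}^{j+1}F(X)$ is only guaranteed to be about $(q+n+1)$-connected, not $(q+(n+1)^j)$-connected as you claim. This does not damage your conclusion: after $\Sigma^j$ each such piece is still at least $(q+n+j)$-connected, so the degree-$0$ piece remains the bottleneck and the realization is $(q+n)$-connected as needed. Your Milnor $\lim^1$ argument for the convergence statement is correct and more detailed than the paper, which simply records the consequence without further comment.
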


\begin{proof}
By the lemma, we know that in each simplicial degree $\perp_{n+1}^{*+1}F(X)$ is at least $(q+(n+1)^1)-1)$-connected.   Since homotopy colimits preserve connectivity, it follows that the realization of $\perp_{n+1}^{*+1}F(X)$ is at least $(q+(n+1)^1-1)$-connected.   The result follows using the fact that in spectra, 
\[|\perp _{n+1}^{*+1}F(X)|\rightarrow F(X)\rightarrow \Gamma_nF(X)
\]
is also a fibration sequence.  
\end{proof}

\subsection{Cross effects and convergence.}

Mimicking the convergence results for the abelian case found in section 4 of {\cite {RB}}, we can place conditions on $\perp _{n+1}F$ that guarantee convergence of the cotriple tower.

\begin{prop}{\label{p:crconv}}  Let $F:\Cf\rightarrow \s$ be a functor that takes values in connective spectra.   Let $X$ be an object in $\Cf$.  Suppose that there is a $c\geq 0$ such that for $1\leq t\leq c+1$,    
 $\perp _{n+1}^tF(X)$ is at least $(c+1-t)$-connected.  Then 
$\gamma_nF:F(X)\rightarrow \Gamma_nF(X)$ is  $(c+1)$-connected.   
\end{prop}
\begin{proof}
The connectivity condition is sufficient to ensure that $|\perp _{n+1}^{*+1}F(X)|$ is at least $c$-connected.  To see this one can use the spectral sequence associated to the simplicial spectrum $\perp_{n+1}^{*+1}F(X)$ that has 
$$
E^1_{p,q}=\pi _p (\perp_{n+1}^{q+1}F(X))
$$  and converges to $\pi_{p+q}(|\perp_{n+1}^{*+1}F(X)|)$  (see, e.g., \cite{Dug} for details).    The connectivity condition guarantees that $E_{p,q}^1\cong 0$ for $p+q\leq c$.   Hence, $|\perp _{n+1}^{*+1}F(X)|$ must be at least $c$-connected.  The result follows again by using the fibration sequence 
\[|\perp _{n+1}^{*+1}F(X)|\rightarrow F(X)\rightarrow \Gamma_nF(X).
\]  
\end{proof}
The condition that $\perp_{n+1}^tF(X)$ is at least $(c+1-t)$-connected for $1\leq t\leq c$ is like that of  stable 
$n$-excision in Goodwillie's calculus, in that the condition guarantees a certain connectivity of the map $F(X)\rightarrow \Gamma_nF(X)$.   We use this condition to guarantee convergence of our tower as follows.

\begin{defn}
For a functor $F:\Cf\rightarrow \s$, object $X$ in $\Cf$, and $n\geq 0$, we set 
$$
{F_{\mathrm {con}}(X,n)={\mathrm {max}}\{c\in {\mathbb Z}
\ |\ {\mathrm {conn}}(\perp ^t_{n+1}F(X))\geq c+1-t\ {\mathrm {for}}\ 1\leq t\leq c+1\}.
}$$
\end{defn}
With this, the next proposition is an immediate consequence of Proposition \ref{p:crconv}.

\begin{prop}  Let $F:\Cf\rightarrow Spec$ be a functor that takes values in connective spectra.  
If $\lim _{n\rightarrow \infty}F_{\mathrm {con}}(X,n)=N$, then $F(X)\rightarrow \Gamma_{\infty}F(X)$ is $(N+1)$-connected.   When $N=\infty$, the cotriple tower converges for $X$.   
\end{prop}
\newpage

\appendix

\section{Proofs of homotopy limit properties}
In this appendix, we address the four properties of homotopy inverse limits in Lemma \ref{l:holim} that are essential to our proof in section 3 that $t$ is a cotriple.   Although these properties are widely accepted as being true, we had difficulty finding proofs of them in the literature.   As they are critical to our work in section 3, we include their proofs in this appendix.  
For $\mathscr{C}$ a category, let ${\rm hom}_{\mathscr{C}}(A,B)$ be the \textit{set} of morphisms between $A,B \in \text{ob}(\mathscr{C})$.  Recall that a simplicial model category also has a simplicial set of maps. For objects $A$ and $B$, we use ${\text {Hom}}_{\mathscr{C}}(A,B)$ to denote this and note that  $({\text {Hom}}_\mathscr{C} (A,B))_n := {\rm hom}_{\mathscr{C}} (A \ox \Delta[n], B)$.

Let ${\sc M}$ be a simplicial model category and let ${\sc C}$ be a small category. 
Following Hirschhorn [H], for $X$ a ${\cal C}$ diagram in ${\cal M}$ and $K$ a ${\cal C}$-diagram in simplicial sets, define 
\[\text{hom}^{\cal C}(K,X) = 
\text{equalizer} \left(
\prod_{C\in{\cal C}} (X_C)^{K(C)}\mapright . \prod_{f:C\rightarrow C' \in {\cal C}} (X_{C'})^{K(C)}\right)
\]
where $K(C)$ is an object  in  $SS$, the category of simplicial sets.  The above makes use of the fact that for $M \in {\rm ob}({\cal M})$ and $K \in {\rm ob}(SS)$, there is an object $M^K$ in ${\cal M}$.

When we let $K = \N({\cal C}\downarrow *)$ (the classifying space or nerve of the over category ${\cal C}\downarrow *$, see [H, 14.1.1]) , we define 
\[\text{holim}_{\cal C}X = \text{hom}^{\cal C}(\N({\cal C}\downarrow *),X(*)).\]
As noted on page 379 of [H], this model for $\text{holim}_{\cal C}X$ is homotopy invariant only when $X$ is objectwise fibrant. When $X$ is objectwise fibrant, 
by Corollary 18.5.2(2) of [H], $\text{holim}_{\cal C}X$ is a fibrant object of ${\cal M}$. 
%\fi 

To establish the desired homotopy inverse limit properties, we make use of the following important adjunction.   This adjunction is stated and proved for morphism sets in [H], but it is straightforward to generalize the proof to simplicial mapping spaces to obtain the version stated below.   % Proposition 

\bigskip\noindent
{\bf Proposition 18.3.10(2) of [H]:} 
Let $X$ be a ${\cal C}$-diagram in ${\cal M}$, $K$ a ${\cal C}$-diagram of simplicial sets and let $W$ be an object of ${\cal M}$, then there is a natural isomorphism 
\[
{\text {Hom}}_{\cal M}\left(W, \text{hom}^{\cal C}(K,X)\right) \cong
{\text {Hom}}_{SS^{\cal C}} \left( K,{\text {Hom}}_{\cal M}(W,X)\right).
\]
Here $SS$ is the category of simplicial sets and $SS^{\cal C}$ is the category of  ${\cal C}$-diagrams in simplicial sets.  

\bigskip

With this adjunction and the Yoneda Lemma, we can establish the desired properties for this model of holim in ${\cal M}$ by reducing them to properties of diagrams of simplicial sets.  In order to do this, we first recall some facts about diagrams of simplicial sets. 

By Definition 18.2.3(1) and Proposition 18.2.5 of [H], for $A$ and $B$ elements of $SS^{\cal C}$, ${\rm Hom}_{SS^{\C}}(A,B)$ consists of simplicial sets of the form
\[
[n]\mapsto \text{Nat}_{SS^{\cal C}}(A\otimes \Delta[n],B)\] 
That is, the $n$-simplices are simplicial maps from $A(C)\times \Delta[n]$ to $B(C)$ that are natural in ${\cal C}$. 
Since the one point set $\ast$ is terminal, we see that the constant ${\cal C}$ diagram to $\ast$ is terminal in $SS^{\cal C}$. %}

%\if false

Given a functor $\alpha$ from ${\cal D}$ to ${\cal C}$ of small categories and objects $A,B$ in $SS^{\cal C}$, we have a natural map
\[\text{Hom}_{SS^{\cal C}}(A,B)\mapright |_{\alpha}. \text{Hom}_{SS^{\cal D}}(A\circ\alpha,B\circ\alpha)\]
by restricting $f$ to $\alpha({\cal C})$,or $f|_{\alpha}(D) = f(\alpha(D))$.

The last property we want is a generalization of the Exponential Law found in [GJ], Proposition II.5.1, to simplicial mapping spaces for diagrams of simplicial sets. 
Given $X\in {\rm ob}(SS^{\cal D})$ and $Y\in {\rm ob}(SS^{{\cal C}\times {\cal D}})$ there is an evaluation map in $SS^{{\cal C}\times {\cal D}}$
\[\text{Hom}_{SS^{\cal D}}(X(\star), Y(-,\star))\times X \mapright ev. Y\]
given by sending $(f,x_n)\in (\text{Hom}_{SS^{\cal D}}(X(\star), Y(-,\star))\times X)_n$ to $f(x_n, {\text id}_n)$. Using this we obtain the following. 

\bigskip\noindent
{\bf Proposition} (Exponential Law). For objects $K$ in $SS^{\cal C}$, $X$ in $SS^{\cal D}$ and $Y$ in $SS^{{\cal C}\times {\cal D}}$, there is a natural isomorphism 
\[\text{Hom}_{SS^{\cal C}}(K(\ast),\text{Hom}_{SS^{\cal D}}(X(\star),Y(\ast,\star)))\mapright ev_*. \text{Hom}_{SS^{{\cal C}\times {\cal D}}}(K(\ast)\times X(\star),Y(\ast,\star)).\]
{\it Proof.} The function $ev_*$ is defined 
by sending $g: K\times \Delta[n] \rightarrow \text{Hom}_{SS^{\cal D}}(X,Y)$ to the composite
\[K\times X\times \Delta[n] \cong K\times\Delta[n]\times X \mapright g\times 1.\text{Hom}_{SS^{\cal D}}(X,Y)\times X \mapright ev. Y.\]
This is an isomorphism whose inverse is the map defined by sending 
$g:K\times X\times \Delta[n]\rightarrow Y$ to the map $g_*:K\times \Delta [n]\rightarrow \text{Hom}_{SS^{\cal D}}(X,Y)$ where
$g_*$ sends 
\[x\time\tau\in K_m \times \Delta[n]_m \cong (\Delta[m]\mapright i_{(x\times\tau)}. K\times\Delta[n])\]
to the composite
\[X\times \Delta[m]\mapright 1\times i_{(x\times\tau)}. X\times K\times \Delta[n]\cong K\times X\times \Delta[n]\mapright g. Y.\]
Here $\iota_{x\times \tau}$ is as defined on p. 6 of [GJ]

\bigskip
We now establish the four properties of our model for $\text{holim}_{\cal C}X$ for ${\cal M}$ that we need to prove $t$ is a cotriple, that is, we prove  the first four properties of Lemma \ref{l:holim}.

\bigskip\noindent
{\bf Property 1.} Given ${\cal C}$ and ${\cal D}$ small categories and $X$ a ${\cal C}\times {\cal D}$ diagram in ${\cal M}$, there are natural isomorphisms
\[\text{holim}_{\cal C} \text{holim}_{\cal D} X \cong \text{holim}_{\cal C\times \cal D}X
\cong \text{holim}_{\cal D} \text{holim}_{\cal C} X.\]

\bigskip\noindent
{\it Proof.} 
Recall that $\N({\cal C}\times {\cal D}\downarrow *) \cong
\N({\cal C}\downarrow *)\times \N({\cal D}\downarrow *)$ as simplicial sets.  Making repeated use of the two propositions, we obtain the following sequences of isomorphisms of simplicial sets:  

\[\begin{aligned}
\text{Hom}_{\cal M}(W,\text{holim}_{{\cal C}\times{\cal D}}X) 
&=\text{Hom}_{\cal M}(W,\text{hom}^{{\cal C}\times {\cal D}}(\N({\cal C}\times{\cal D}\downarrow *),X)\\
&\cong \text{Hom}_{SS^{{\cal C}\times{\cal D}}}(\N({\cal C}\times{\cal D}\downarrow *),\text{Hom}_{\cal M}(W,X))\\
&\cong \text{Hom}_{SS^{{\cal C}\times{\cal D}}}((\N({\cal C}\downarrow *)\times \N({\cal D}\downarrow *),\text{Hom}_{\cal M}(W,X))\\
&\cong \text{Hom}_{SS^{\cal C}}(\N({\cal C}\downarrow *), \text{Hom}_{SS^{\cal D}}(\N({\cal D}\downarrow *),\text{Hom}_{\cal M}(W,X)))\\
&\cong \text{Hom}_{SS^{\cal C}}(\N({\cal C}\downarrow *), \text{Hom}_{\cal M}(W,\text{hom}^{\cal D}(\N({\cal D}\downarrow *),X))\\
&\cong \text{Hom}_{\cal M}(W,\text{hom}^{\cal C}(\N({\cal C}\downarrow *),\text{hom}^{\cal D}(\N({\cal D}\downarrow *),X)))\\
&\cong \text{Hom}_{\cal M}(W,\text{holim}_{\cal C}\text{holim}_{\cal D} X).\\
\end{aligned}
\]

\if false 
\noindent {\color{blue}  Let $W \in \M$. %
The following isomorphisms are all as sets. 
\[
\begin{array}{rcll}
\M (W, \holim_{\C \x \D} X) &:=& \M (W, \hom^{\C \x \D} (B(\C \x \D \downarrow *), X)  & \text{Our model of holim}\\
&\cong & SS^{\C \x \D}( B(\C \x \D \downarrow *), \M (W, X) )& \text{ By 18.3.10(2) of [H]}\\
&\cong & SS^{\C \x \D}( B(\C\downarrow *) \x B(\D \downarrow *), \M (W, X) ) & \text{ By }(\star)\\
& \cong & SS^{\C}\left( B(\C\downarrow *),  SS^{\D}( B(\D \downarrow *), \M (W, X)) \right)  & \text{ by Exp. Law}\\ 
& \cong &SS^{\C}\left( B(\C\downarrow *),  \M (W, \hom^{\D}( B(\D \downarrow *), X)) \right) &  \text{ By 18.3.10(2) of [H]}\\
& \cong &\M \left( W, \hom^{\C} (B(\C\downarrow *),  \hom^{\D}( B(\D \downarrow *), X)) \right) &  \text{ By 18.3.10(2) of [H]}\\
& \cong &\M(W, \holim_{\C} \holim_{\D} X) &  \text{Our model of holim}\\
\end{array}
\]
}\fi
By the  Yoneda lemma, we obtain the natural isomorphism.  

\bigskip\noindent
{\bf Property 2.} If $\alpha:{\cal C}\rightarrow {\cal D}$ is a functor of small categories, and $X$ is a ${\cal D}$-diagram in ${\cal M}$, we obtain a natural map 
\[\text{holim}_{\cal D}X\rightarrow \text{holim}_{\cal C} (X\circ\alpha).\]

\bigskip
\noindent
{\it Proof}:
Observe that there is a natural transformation of objects in $SS^{\cal C}$
\[\N({\cal C}\downarrow *)\mapright\alpha. \N({\cal D}\downarrow *)\circ \alpha.\]
Then
\[\begin{aligned}
\text{Hom}_{\cal M}(W,\text{holim}_{\cal D} X) & \cong \text{Hom}_{SS^{\cal D}}(\N({\cal D}\downarrow *),\text{Hom}_{\cal M}(W,X))\\
&\rightarrow \text{Hom}_{SS^{\cal C}}(\N({\cal D}\downarrow *)\circ\alpha,\text{Hom}_{\cal M}(W,X)\circ\alpha)\\
&\mapright\alpha^*.\text{Hom}_{SS^{\cal C}}(\N({\cal C}\downarrow *),\text{Hom}_{\cal M}(W,X)\circ\alpha)\\
&\cong \text{Hom}_{SS^{\cal C}}(\N({\cal C}\downarrow *),\text{Hom}_{\cal M}(W,X\circ\alpha))\\
&\cong \text{Hom}_{\cal M}(W,\text{hom}^{\cal C}(\N({\cal C}\downarrow *),X\circ\alpha))\\
&\cong \text{Hom}_{\cal M}(W,\text{holim}_{\cal C}X\circ\alpha)\\
\end{aligned}
\]
and so by Yoneda, we have our natural transformation. 

\bigskip\noindent
{\bf Property 3.} If $T$ is a {\it constant} ${\cal C}$-diagram in ${\cal M}$ with $T(C) = T$ and $T(f) = id_T$ where $T$ is a terminal object of ${\cal M}$, then
\[\text{holim}_{\cal C} T \cong T.\]

\bigskip\noindent
{\it Proof}: 
\[\begin{aligned}
\text{Hom}_{\cal M}(W,\text{holim}_{\cal C}T) &\cong \text{Hom}_{SS^{\cal C}}(\N({\cal C}\downarrow *),\text{Hom}_{\cal M}(W,T))\\
&\cong *\\
\end{aligned}
\]
(this is because $\text{Hom}_{\cal M}(W,T) = *$, the constant one point ${\cal C}$-diagram of simplicial sets, since $T$ is terminal in ${\cal M}$). Since $W$ was arbitrary, $\text{holim}_{\cal C}T$ is terminal and terminal objects are all isomorphic. 

\bigskip\noindent
{\bf Property 4.} If ${\cal I}$ is the trivial category on $i$, then for $X$ any ${\cal I}$-diagram in ${\cal M}$, 
\[\text{holim}_{\cal I} X \cong X(i).\]

\bigskip\noindent
{\it Proof.} This is Proposition 18.3.7(2) of [H].

%%%%%%%%%%%%%%%%%%%%%%%%%%%%%%%%
%%%%%%%%%%%%%%%%%%%%%%%%%%%%%%%%
%%%%%%%%%%%%%%%%%%%%%%%%%%%%%%%%
%%%%%%%%%%%%%%%%%%%%%%%%%%%%%%%%
\section{ \textit{ by Rosona Eldred}}
%%%%%%%%%%%%%%%%%%%%%%%%%%%%%%%%
%%%%%%%%%%%%%%%%%%%%%%%%%%%%%%%%
%%%%%%%%%%%%%%%%%%%%%%%%%%%%%%%%
%%%%%%%\T%%%%%%%%%%%%%%%%%%%%%%%%%
%

Lemma \ref{l:Tn} may be viewed as the base case of a more general phenomenon. The purpose of this appendix is to explain how $\perp_{n+1}^{*+1}F$ and the tower of functors $\mathrm{T}_n^kF$ defining $\mathrm{P}_nF$ are related by a sequence of fibration sequences. We prove the following proposition.

\begin{prop}\label{prop:alldim} For a functor $F:\Cf \rightarrow \s$, there is a homotopy fiber sequence
$|\mathrm{sk}_k( \perp^{\ast +1}_{n+1} F)(A)| \rightarrow F(A) \rightarrow \mathrm{T}_n^{k+1} F(A).$  
\end{prop}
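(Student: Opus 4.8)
The plan is to prove Proposition~\ref{prop:alldim} by induction on $k$, using Lemma~\ref{l:Tn} as the base case $k=0$ (after observing that $\sk_0(\perp^{*+1}_{n+1}F) = \perp_{n+1}F$ with the relevant degeneracies, so $|\sk_0(\perp^{*+1}_{n+1}F)(A)| \simeq \perp_{n+1}F(A)$ and $\T_n^1 F = T_nF$). The main idea is to compare the $k$-skeleton to the $(k-1)$-skeleton of the simplicial object $\perp^{*+1}_{n+1}F$ via the standard pushout describing how a simplicial object is built from its skeleta, namely
\[
\xymatrix{ \partial\Delta^k \otimes (\perp_{n+1}^{k+1}F)(A) \ar[r]\ar[d] & \Delta^k\otimes(\perp_{n+1}^{k+1}F)(A) \ar[d] \\ |\sk_{k-1}(\perp^{*+1}_{n+1}F)(A)| \ar[r] & |\sk_k(\perp^{*+1}_{n+1}F)(A)|,}
\]
which (after splitting off degeneracies) realizes $|\sk_k|$ as a homotopy pushout of $|\sk_{k-1}|$ with a copy of the $k$-fold suspension (smash with $\Delta^k/\partial\Delta^k = S^k$) of the nondegenerate part of $\perp_{n+1}^{k+1}F(A)$. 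On the Goodwillie side, one uses the analogue of Lemma~\ref{l:Tn} iterated: $\T_n^kF(A) \to \T_n^{k+1}F(A)$ has homotopy fiber built from $\perp_{n+1}$ applied to $\T_n^kF$, evaluated at $A$, using that $B\otimes_A S$ computations commute with the $T_n$ construction and that homotopy limits commute (Lemma~\ref{l:holim}.4).

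First I would set up precisely the two ``towers'' being compared: the tower $\ast = |\sk_{-1}| \to |\sk_0| \to |\sk_1| \to \cdots$ of skeleta of $\perp^{*+1}_{n+1}F(A)$, and the tower $F(A) = \T_n^0F(A) \to \T_n^1F(A) \to \T_n^2F(A)\to\cdots$. I would then show, by induction, that for each $k$ there is a homotopy fiber sequence $|\sk_k(\perp^{*+1}_{n+1}F)(A)| \to F(A) \to \T_n^{k+1}F(A)$, equivalently that the square
\[
\xymatrix{ |\sk_k(\perp^{*+1}_{n+1}F)(A)| \ar[r]\ar[d] & F(A) \ar[d] \\ \ast \ar[r] & \T_n^{k+1}F(A) }
\]
is homotopy cartesian. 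Granting the $(k-1)$-case, I would identify the homotopy cofiber of $|\sk_{k-1}| \to |\sk_k|$ and the homotopy fiber of $\T_n^kF(A) \to \T_n^{k+1}F(A)$ and show they agree up to the expected shift. Since everything is happening in spectra, homotopy cofiber sequences are homotopy fiber sequences, so the skeletal filtration pushout above is also a pullback, and one can chase the resulting map of fiber sequences. The key computation is that the homotopy fiber of $\T_n^kF(A)\to\T_n^{k+1}F(A)$ is $|\,(\text{nondeg. part of }\perp_{n+1}^{k+1}F)(A)\,|$ suitably desuspended/suspended — this is exactly Lemma~\ref{l:Tn} applied to the functor $\T_n^kF$ combined with the observation that $\perp_{n+1}(\T_n^kF)(A)$ unwinds, via commuting homotopy limits, into the $(k+1)$st layer of the $\perp$-resolution.

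The hard part will be bookkeeping the degeneracies and the suspension coordinates so that the skeletal pushout on the $\perp$-side matches, on the nose, the iterated-fiber description on the $T_n$-side; in particular one must check that the connecting maps are compatible, not merely that the layers abstractly agree. A clean way to organize this is to avoid splitting off degeneracies by hand and instead argue that $|\sk_k(\perp^{*+1}_{n+1}F)(A)|$ is the homotopy colimit over $\Delta_{\leq k}^{op}$ and that $\T_n^{k+1}F(A)$ is, dually, a homotopy limit over $\Delta_{\leq k}$ of $F$ applied to the appropriate cubes $B\otimes_A(-)$; then the fiber sequence follows from Lemma~\ref{l:Tn} by applying $\holim$ over the truncated category and using that $F$ takes values in a stable category so that the relevant finite homotopy limits and colimits interchange (Lemma~\ref{l:pn} and Remark~4.5 in spirit). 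I expect the induction step to reduce, after these identifications, to a single application of Lemma~\ref{l:cartesian}-style ``two out of three'' for homotopy cartesian squares, together with the $k=0$ case Lemma~\ref{l:Tn}.
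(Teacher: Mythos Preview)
Your overall strategy---induction on $k$ with Lemma~\ref{l:Tn} as the base case, exploiting that we are in spectra so fiber and cofiber sequences coincide---is exactly the paper's strategy. But the specific decomposition you propose for the inductive step is different from the paper's, and yours has a gap.

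You propose to use the standard latching-object skeletal pushout, which would give a cofiber sequence
\[
|\sk_{k-1}(\perp_{n+1}^{*+1}F)(A)| \longrightarrow |\sk_k(\perp_{n+1}^{*+1}F)(A)| \longrightarrow \Sigma^k\bigl(\perp_{n+1}^{k+1}F(A)/L_k\bigr),
\]
and to match this against the fiber sequence coming from the factorization $F(A)\to T_n^kF(A)\to T_n^{k+1}F(A)$, whose successive fibers are $|\sk_{k-1}|$ (by induction) and $\perp_{n+1}(T_n^kF)(A)\simeq T_n^k(\perp_{n+1}F)(A)$ (by Lemma~\ref{l:Tn} applied to $T_n^kF$). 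For this to work you would need
\[
T_n^k(\perp_{n+1}F)(A)\;\simeq\;\Sigma^k\bigl(\text{nondegenerate part of }\perp_{n+1}^{k+1}F(A)\bigr),
\]
and you assert this ``unwinds, via commuting homotopy limits.'' It does not, at least not in any evident way: $T_n^k(\perp_{n+1}F)(A)$ is an iterated homotopy limit over $\bigl(P_0({\bf n{+}1})\bigr)^k$, not a single suspension of a quotient by degeneracies. You yourself flag the bookkeeping of degeneracies and connecting maps as ``the hard part,'' and indeed it is: matching the layers abstractly is not enough, and the abstract match is itself unproved here.

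The paper avoids this entirely by using a \emph{different} pushout description of the $k$-skeleton. Via the cofinality $|\sk_k X_\bullet|\simeq \hocolim_{P_0^{op}([k])} X_\bullet$ and the covering lemma for cubical hocolims, one gets
\[
|\sk_k \perp_{n+1}^{*+1}F| \;\simeq\; \hocolim\Bigl(|\sk_{k-1}\perp_{n+1}^{*+1}F|\;\longleftarrow\;|\sk_{k-1}\perp_{n+1}^{*+1}(\perp_{n+1}F)|\;\longrightarrow\;\perp_{n+1}F\Bigr).
\]
This is the decomposition that makes the induction run cleanly: one forms the $3\times 3$ diagram whose middle and bottom rows are the inductive hypothesis for $F$ and $T_nF$ and whose top row is the inductive hypothesis for $\perp_{n+1}F$ (using $\perp_{n+1}T_n^k\simeq T_n^k\perp_{n+1}$), then applies an elementary ``yoga of fibers'' lemma to the right-hand $2\times 2$ block to produce exactly the pushout square above with $\mathrm{hofib}\bigl(F(A)\to T_n^{k+1}F(A)\bigr)$ in the final corner. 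No identification of normalized pieces with suspensions is needed, and the compatibility of maps is automatic. Your ``alternative'' paragraph (modeling $|\sk_k|$ as a hocolim over $\Delta_{\leq k}^{op}$) is heading in this direction, but the dual claim that $T_n^{k+1}F(A)$ is a holim over $\Delta_{\leq k}$ is not something you have established and is not how the paper proceeds.
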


This gives an alternative approach to understanding Theorem \ref{t:agreeatA}, since as $k$ goes to infinity, Proposition \ref{prop:alldim} suggests that there is a homotopy fiber sequence $|\perp_{n+1}^{*+1} F(A)| \to F(A) \to P_nF(A)$, i.e., the result of Theorem \ref{t:agreeatA}.

The proof of Proposition \ref{prop:alldim} is by induction, taking Lemma \ref{l:Tn} as the base case with $k=0$.

%%%%%%%%%%%%%%%%%%%%%%%%%%%%%%%%
%%%%%%%%%%%%%%%%%%%%%%%%%%%%%%%%
\subsection{Lemmas necessary for proof}
%%%%%%%%%%%%%%%%%%%%%%%%%%%%%%%%
%%%%%%%%%%%%%%%%%%%%%%%%%%%%%%%%
%We first need to state a few lemmas. 

Given a square of spectra,
\[
\xymatrix{
A\ar[r]\ar[d] & B \ar[d] \\
C\ar[r] & D\\
}
\]

we have an associated diagram which includes the fibers. Then, let
\[
\begin{array}{rclp{.5 cm}rcl}
h_1 &:=& \mathrm{hofib} (C \rightarrow D)& & 
       h_4 &:=& \mathrm{hofib} (A \rightarrow C)\\
h_2 &:=& \mathrm{hofib} (A \rightarrow B) & &
      h_5 &:=& \mathrm{hofib} (B \rightarrow D)\\ 
h_3 &:=& \mathrm{hofib} (h_4 \rightarrow h_5)  & &  
      h_6 &:=& \mathrm{hofib} (A \rightarrow D)\\
&=& \mathrm{hofib} (h_2 \rightarrow h_1)\\
\end{array}
\]
%\medskip

\begin{lem}\label{lem:yoga} With the definitions given above, the following is a cartesian square:
\[
\xymatrix{h_3 \ar[r]\ar[d] & h_4 \ar[d] \\ h_2 \ar[r] & h_6}
\]
\end{lem}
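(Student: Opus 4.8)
The plan is to exploit that $\s$ is stable, so that homotopy fibers and homotopy cofibers agree up to a shift, finite homotopy limits commute with one another, and, crucially, a commutative square of spectra is homotopy cartesian if and only if it is homotopy cocartesian, if and only if the induced map on horizontal (equivalently, vertical) homotopy fibers is a weak equivalence. I would apply this last criterion in the vertical direction: it suffices to show that the map $\hofib(h_3 \to h_2) \to \hofib(h_4 \to h_6)$ induced by the square is a weak equivalence, where all maps in the square are the evident ones described below.

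First I would record the relevant fiber sequences. By the principle that the total fiber of a square may be computed in more than one way (recalled just before Lemma~\ref{iteratedfiber}), the object $h_3$, being the total fiber of the original square, is simultaneously $\hofib(h_2 \to h_1)$ and $\hofib(h_4 \to h_5)$; this produces the maps $h_3 \to h_2$ and $h_3 \to h_4$ together with fiber sequences $h_3 \to h_2 \to h_1$ and $h_3 \to h_4 \to h_5$. Next, applying Lemma~\ref{iteratedfiber} to the two factorizations of the long diagonal $A \to D$, namely $A \to B \to D$ and $A \to C \to D$ (each two-step composite viewed as a $2$-cube, i.e.\ a map of $1$-cubes), produces natural fiber sequences $h_2 \to h_6 \to h_5$ and $h_4 \to h_6 \to h_1$; these give the maps $h_2 \to h_6$ and $h_4 \to h_6$ and show that the square in the statement commutes, the point being that both composites $h_3 \to h_6$ are identified with the canonical map from the total fiber of the original square to the homotopy fiber of its long diagonal $A \to D$.

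Then I would compute. Rotating the fiber sequence $h_3 \to h_2 \to h_1$ gives $\hofib(h_3 \to h_2) \simeq \Omega h_1$, and rotating $h_4 \to h_6 \to h_1$ gives $\hofib(h_4 \to h_6) \simeq \Omega h_1$; both identifications are with $\Omega\,\hofib(C \to D)$. It then remains only to check that, under these identifications, the map induced by the square is the identity of $\Omega h_1$ (hence a weak equivalence), and a final application of the cartesian/cocartesian criterion finishes the proof. I expect this last compatibility to be the main obstacle: both homotopy fibers are abstractly $\Omega h_1$, and one must trace the two descriptions through several layers of iterated homotopy fibers to see that the comparison map is a genuine equivalence rather than merely some self-map of an equivalent spectrum. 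The cleanest way I see to handle it is to realize all of $h_1,\dots,h_6$ and the maps among them inside the $3\times 3$ grid of homotopy fiber sequences attached to the original square (take homotopy fibers of the columns, then homotopy fibers of the rows), so that every map in sight is manifestly natural in the square; the required compatibility then follows formally from Lemma~\ref{iteratedfiber} together with the naturality of the octahedral fiber sequences used above.
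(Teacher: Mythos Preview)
Your proposal is correct and follows essentially the same strategy as the paper: show that the induced map on homotopy fibers of two parallel edges is an equivalence, using that in spectra this is equivalent to the square being cartesian. The only difference is a choice of direction---the paper takes horizontal fibers and identifies both with $\Omega h_5$, while you take vertical fibers and identify both with $\Omega h_1$---and the paper in fact builds exactly the $3\times 3$ grid of fiber sequences you propose for checking the compatibility (the rightmost column there is $h_5 \xrightarrow{=} h_5 \to \pt$, so the induced map on $\Omega h_5$ is visibly the identity; your $\Omega h_1$ version is the symmetric argument using the bottom row). If anything, you are slightly more careful than the paper in flagging that one must verify the induced map is an equivalence rather than merely that the two fibers are abstractly equivalent.
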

%\medskip

\begin{proof}[Proof of Lemma \ref{lem:yoga}]
We can construct an associated diagram of fibers. It is the leftmost in the homotopy fiber sequence shown in Figure \ref{fig:hofib-9pt-square}. %following sequence:

\begin{figure}[h!]
\[
\scalebox{.85}{$
\begin{array}{c}
\xymatrix{
h_3 \ar[r]\ar[d] & h_4 \ar[r]\ar[d] & h_5 \ar@{=}[d]\\
h_2 \ar[r]\ar[d] & h_6 \ar[r] \ar[d]& h_5 \ar[d]\\
h_1 \ar@{=}[r] & h_1 \ar[r] & \pt\\
}
\end{array}
\ra
\begin{array}{c}
\xymatrix{
h_3 \ar[r]\ar[d] & h_4 \ar[r]\ar[d] & h_5 \ar[d]\\
h_2 \ar[r]\ar[d] & A \ar[r] \ar[d]& B \ar[d]\\
h_1 \ar[r] & C \ar[r] & D\\
}
\end{array}
\ra
\begin{array}{c}
\xymatrix{ 
\pt \ar[r]\ar[d] & \pt \ar[r]\ar[d] & \pt \ar[d]\\
\pt \ar[r]\ar[d] & D \ar@{=}[r] \ar@{=}[d]& D \ar@{=}[d]\\
\pt \ar[r] & D \ar@{=}[r] & D\\
}
\end{array}
$}
\]
\caption{Homotopy fiber sequence of diagrams}
\label{fig:hofib-9pt-square}
\end{figure}
\medskip

\noindent %\textbf{Yoga}: 
Since $h_3 = \hofib (h_4 \ra h_5)$, then $h_3 \ra h_4 \ra h_5$ is a homotopy fiber sequence. This says that $\Omega h_5 = \hofib (h_3 \ra h_4)$. Figure \ref{fig:h_2h_6h_5} illustrates that $h_2 \ra h_6 \ra h_5$ is also a homotopy fiber sequence, as homotopy-fiber-taking commutes. 

\begin{figure}[h]
\[
\xymatrix{
h_2 \ar[r]\ar[d] & h_6 \ar[r] \ar[d]& h_5 \ar[d]\\
h_2 \ar[r]\ar[d] & A \ar[r] \ar[d]& B \ar[d]\\
\pt \ar[r]& D \ar@{=}[r]& D\\
}
\]
\caption{$h_2 \ra h_6 \ra h_5$ is a homotopy fiber sequence}
\label{fig:h_2h_6h_5}
\end{figure}

Thus,  $\Omega h_5 \ra h_2 \ra h_6$ is also a homotopy fiber sequence. We can extend the square to the left with its homotopy fibers, as in Figure \ref{fig:eqfibers-cartesian}. As they are equivalent and we're in spectra (which cures the basepoint troubles), we can conclude that the square is a homotopy pullback.

%\[
%\xymatrix{h_3 \ar[r]\ar[d] & h_4 \ar[d] \\ h_2 \ar[r] & h_6}
%\]
\begin{figure}[h]
\[
\xymatrix{
\Omega h_5 \ar[r]\ar@{=}[d] &h_3 \ar[r]\ar[d] & h_4 \ar[d] \\ 
\Omega h_5 \ar[r] & h_2 \ar[r] & h_6}
\]
\caption{Square and its fibers}
\label{fig:eqfibers-cartesian}
\end{figure}

\end{proof}

\begin{lem}\label{lem:sk_k-hocolimcube} 
$|\mathrm{sk}_k \perp_{n+1}^{\ast +1} F| \simeq \mathrm{hocolim}_{\mathscr{P}_0^{op}([k])}( \perp_{n+1}^{\ast +1} F).$
\end{lem}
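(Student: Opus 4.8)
The plan is to deduce this from a general statement about simplicial objects and then specialize. The general statement: if $Z_\bullet$ is a Reedy cofibrant simplicial object in $\s$ --- which we may always arrange, or avoid by modeling $Z_\bullet$ and all homotopy colimits below by the generalized bar constructions of Lemma~\ref{l:Bar}, automatically Reedy cofibrant by Lemma~\ref{l:Bar}.1 --- then the natural map
\[
\hocolim_{S\in\mathscr{P}_0^{op}([k])} Z_{|S|-1}\;\longrightarrow\;|\sk_k Z_\bullet|
\]
is a weak equivalence. Here $\mathscr{P}_0([k])$ is the poset of nonempty subsets of $\{0,1,\dots,k\}$, the $S$-diagram has as structure maps the face maps of $Z_\bullet$ determined by subset inclusions, and the displayed map comes from the functor $\lambda\colon\mathscr{P}_0([k])\to\Delta_{\le k}$ sending $S$ to $[\,|S|-1\,]$ (and an inclusion $S\subseteq T$ to the corresponding injection) together with the standard identification $|\sk_k Z_\bullet|\simeq\hocolim_{\Delta_{\le k}^{op}}Z_\bullet$ for $Z_\bullet$ Reedy cofibrant. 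Applying this with $Z_\bullet=\perp_{n+1}^{\ast+1}F$, for which $Z_{|S|-1}=\perp_{n+1}^{|S|}F$, is exactly the lemma.

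To prove the general statement I would induct on $k$, using the filtration of the left-hand side by the cardinality of $S$ and of the right-hand side by simplicial (skeletal) degree, which $\lambda$ respects; the top stage on the left is the single subset $\{0,\dots,k\}$, sent by $\lambda$ to $[k]$. The base case $k=0$ is immediate, both sides being $Z_0=\perp_{n+1}F$. For the inductive step, the skeletal filtration presents $|\sk_k Z_\bullet|$ as the homotopy pushout of $|\sk_{k-1}Z_\bullet|$ with $Z_k$ along the latching data $(\partial\Delta^k\otimes Z_k)\cup_{\partial\Delta^k\otimes L_kZ}(\Delta^k\otimes L_kZ)$; on the other side, writing the nerve of $\mathscr{P}_0([k])$ as the union of the subcomplex of chains avoiding the top subset and the (contractible) subcomplex of chains through it, Lemma~\ref{l:Bar}.3 presents $\hocolim_{\mathscr{P}_0^{op}([k])}$ as a homotopy pushout built from the homotopy colimit over the poset of proper nonempty subsets of $\{0,\dots,k\}$ and the value $Z_k$. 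Comparing the two pushout squares reduces the step to (i) identifying the homotopy colimit of $S\mapsto Z_{|S|-1}$ over the poset of proper nonempty subsets of $\{0,\dots,k\}$ with $|\sk_{k-1}Z_\bullet|$, and (ii) matching the attaching maps, i.e. the latching term, with the corresponding ``boundary'' homotopy colimit.

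I expect step~(i), and the role of the latching object $L_kZ$ in step~(ii), to be the main obstacle. The point is that the poset of proper nonempty subsets of $\{0,\dots,k\}$ is the face poset of $\partial\Delta^k$, which has strictly more cells than the poset $\mathscr{P}_0([k-1])$ (the face poset of $\Delta^{k-1}$) handled directly by the inductive hypothesis; the surplus is absorbed precisely by the degeneracies of $Z_\bullet$, since whenever a subset inclusion is ``along a repeated vertex'' the corresponding structure map of $S\mapsto Z_{|S|-1}$ becomes, after the identifications induced by $\lambda$, a degeneracy rather than a genuine face. Verifying that gluing these degenerate contributions (via the simplicial identities $d_is_j=s_{j-1}d_i$, $d_js_j=\mathrm{id}=d_{j+1}s_j$, etc.) collapses the $\mathscr{P}_0$-indexed homotopy colimit down onto the skeletal filtration is the heart of the argument --- the simplest instance being the classical fact that $|\sk_1 Z_\bullet|\simeq Z_0\sqcup^h_{Z_1}Z_0$ (along $d_0,d_1$) even though all of $Z_1$, not just its non-degenerate part, appears. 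For the cotriple resolution this collapse is driven by the degeneracies $s_i=\perp^i\delta\perp^{k-i}$; and working throughout in the generalized bar construction model of Lemma~\ref{l:Bar}, so that all the gluings are strict and Reedy cofibrancy is automatic, should make this bookkeeping manageable.
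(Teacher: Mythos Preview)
Your reduction to the general statement, and in particular your identification of the functor $\lambda\colon\mathscr{P}_0([k])\to\Delta_{\le k}$, $S\mapsto[\,|S|-1\,]$, is exactly right; but your inductive attack on that general statement is doing far more work than necessary. The paper's proof is a two-line cofinality argument: the functor $\lambda$ is homotopy left cofinal (this is quoted from Sinha \cite{Dev}), hence $\lambda^{op}$ is homotopy right cofinal, and therefore restriction along it induces a weak equivalence
\[
\hocolim_{\mathscr{P}_0^{op}([k])}(Z\circ\lambda^{op})\;\buildrel\simeq\over\longrightarrow\;\hocolim_{\Delta_{\le k}^{op}}Z\;\simeq\;|\sk_k Z_\bullet|.
\]
That is the whole proof. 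Your induction, with its careful accounting of degeneracies and latching objects, would in effect re-prove this cofinality by hand; it can be made to work, but the degeneracy bookkeeping you flag in steps~(i)--(ii) is genuinely delicate (you must check that for every nondegenerate simplex $\sigma$ of $\Delta_{\le k}^{op}$ the over-category $\sigma/\lambda^{op}$ has contractible nerve, which is precisely Quillen's Theorem~A criterion), and you have not actually carried it out. The cofinality route buys you all of that for free.
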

%\smallskip

\begin{proof}[Proof of Lemma \ref{lem:sk_k-hocolimcube}]  Let $\# S$ be the cardinality of the set $S$.
From \cite{Dev}, we know the natural functor that maps $\mathscr{P}_0 ([n])$ to $\Delta_{\leq n}$ via $S \mapsto [\# S-1]$ is homotopy left cofinal. Its dual which maps $\mathscr{P}_0([n])^{op}$ to $\Delta^{op}_{\leq n}$  is then homotopy right cofinal. That is, we have that $\hocolim_{\Delta^{op}_{\leq n}} X_\pt \simeq \hocolim_{\mathscr{P}_0([n])^{op}} X_\pt$. 

Given that $|\sk_n X_\pt| \simeq \hocolim_{\Delta^{op}_{\leq n}} X_\pt$, with $X_\pt = \perp_{n+1}^{\ast +1} F$, we have the statement of our lemma.

%In the case $n=1$, $\hocolim_{\mathscr{P}_0([1])^{op}} X_\pt$ is $\hocolim (X_0 \leftarrow X_1 \rightarrow X_0)$. Then for $X_\pt = \perp_{n+1}^{\ast +1} F(A)$, we've described the statement of Observation \ref{obs:sk}. %\bigskip
\end{proof}

\begin{lem}\label{lem:skelperp} For $F:\Cf\rightarrow \s$, 
$$
|\mathrm{sk}_k \perp^{\ast +1}_{n+1} F| \simeq \mathrm{hocolim}(|\mathrm{sk}_{k-1} \perp^{\ast +1}_{n+1}F|\leftarrow |\mathrm{sk}_{k-1} \perp^{\ast +1}_{n+1} (\perp_{n+1} F)| \rightarrow  \perp_{n+1} F).
$$
\end{lem}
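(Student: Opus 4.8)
The statement to prove is Lemma \ref{lem:skelperp}: for $F:\Cf\rightarrow \s$,
\[
|\mathrm{sk}_k \perp^{\ast +1}_{n+1} F| \simeq \mathrm{hocolim}\bigl(|\mathrm{sk}_{k-1} \perp^{\ast +1}_{n+1}F|\leftarrow |\mathrm{sk}_{k-1} \perp^{\ast +1}_{n+1} (\perp_{n+1} F)| \rightarrow  \perp_{n+1} F\bigr).
\]
The basic idea is that building $\mathrm{sk}_k$ out of $\mathrm{sk}_{k-1}$ is a pushout along the $k$th latching object, and the key combinatorial fact is that for the cotriple simplicial object $\perp^{\ast+1}_{n+1}F$ the degree-$k$ term, together with its latching data, can be identified in terms of lower skeleta of the auxiliary simplicial object $\perp^{\ast+1}_{n+1}(\perp_{n+1}F)$. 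Concretely, $\perp^{k+1}_{n+1}F = \perp^k_{n+1}(\perp_{n+1}F)$, which is the degree-$(k-1)$ term of $\perp^{\ast+1}_{n+1}(\perp_{n+1}F)$; the extra copy of $\perp_{n+1}$ on the right supplies an ``extra degeneracy''-type structure relating the two simplicial objects, and the face map $d_0$ (or the counit $\epsilon$) gives the augmentation to $\perp_{n+1}F$.

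\textbf{Steps.} First I would recall the standard skeletal filtration pushout square (Reedy theory, as set up in the Prerequisites section): for any simplicial object $Y_\bullet$ in a suitable model category,
\[
|\mathrm{sk}_k Y_\bullet| \simeq \mathrm{hocolim}\bigl(|\mathrm{sk}_{k-1}Y_\bullet| \leftarrow \Delta^k\otimes L_k Y \cup_{\partial\Delta^k\otimes L_kY}\partial\Delta^k\otimes Y_k \;\;? \bigr),
\]
but rather than the general formula it is cleaner here to use Lemma \ref{lem:sk_k-hocolimcube}, which already identifies $|\mathrm{sk}_k \perp^{\ast+1}_{n+1}F|$ with $\hocolim_{\mathscr P_0([k])^{op}}\perp^{\ast+1}_{n+1}F$, a punctured $(k+1)$-cube. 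Second, I would slice this punctured $(k+1)$-cube $\mathscr P_0([k])^{op}$ along the coordinate corresponding to the element $0\in[k]$: the subposet of subsets $S$ with $0\notin S$ is a copy of $\mathscr P_0([k-1]^{op})$ giving $|\mathrm{sk}_{k-1}\perp^{\ast+1}_{n+1}F|$ after reindexing; the subposet of $S$ containing $0$ is a copy of $\mathscr P([k-1])^{op}$ which, because it has a terminal object (namely $\{0\}$), has homotopy colimit equal to the value at that object, namely $\perp^{1}_{n+1}F \cdots$ — more precisely I must track the cotriple structure so that the value at $\{0\}$ is $\perp_{n+1}F$ while the punctured part containing $0$ contributes $|\mathrm{sk}_{k-1}\perp^{\ast+1}_{n+1}(\perp_{n+1}F)|$. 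Third, having written $\mathscr P_0([k])^{op}$ as the union of these two subposets glued along their intersection (which again has a terminal object, contractible, or rather contributes the $\mathrm{sk}_{k-1}$ of the auxiliary object), I apply Lemma \ref{l:Bar}.3 (the pushout property of the bar construction) and the dual statement to conclude that $\hocolim$ over the union is the homotopy pushout of the three pieces, which is exactly the right-hand side of the lemma.

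\textbf{Main obstacle.} The real work is bookkeeping: carefully matching the face/degeneracy maps of the cotriple simplicial object $\perp^{\ast+1}_{n+1}F$ under the cofinal comparison $\mathscr P_0([k])^{op}\to\Delta^{op}_{\leq k}$ with the ``augmented + extra structure'' simplicial object $\perp^{\ast+1}_{n+1}(\perp_{n+1}F)$, so that the two legs of the claimed homotopy pushout are genuinely the maps induced by $\epsilon:\perp_{n+1}F\to F$ (giving $|\mathrm{sk}_{k-1}\perp^{\ast+1}_{n+1}(\perp_{n+1}F)|\to|\mathrm{sk}_{k-1}\perp^{\ast+1}_{n+1}F|$) and by the simplicial-contraction/augmentation (giving $|\mathrm{sk}_{k-1}\perp^{\ast+1}_{n+1}(\perp_{n+1}F)|\to\perp_{n+1}F$). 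I expect to handle this by identifying, for each $k$, the ``$0$-th face away'' decomposition $[k]\cong\{0\}\sqcup[k-1]$ of the indexing category and checking that under this decomposition the cotriple object restricts to the auxiliary object with its canonical augmentation — essentially the content of the simplicial identities for a cotriple resolution, for which one can cite section 8.6 of \cite{We}. Everything else is a formal consequence of the cofinality result (Lemma \ref{lem:sk_k-hocolimcube}), the pushout property of bar constructions (Lemma \ref{l:Bar}.3 and its dual), and the fact that homotopy colimits over a poset with a terminal object compute the value at that object.
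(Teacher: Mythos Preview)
Your overall strategy matches the paper's: use Lemma~\ref{lem:sk_k-hocolimcube} to rewrite $|\mathrm{sk}_k\perp^{\ast+1}_{n+1}F|$ as a homotopy colimit over the punctured cube $\mathscr{P}_0([k])^{op}$, then apply the covering lemma (the hocolim dual of Lemma~\ref{l:holim}.5, equivalently Lemma~\ref{l:Bar}.3) to decompose this into a homotopy pushout.  The paper's one-sentence proof does exactly this.

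However, your specific decomposition has a gap.  You split $\mathscr{P}_0([k])^{op}$ into $J'=\{S:0\in S\}$ and $J''=\{S:0\notin S\}$, and then speak of gluing ``along their intersection.''  But $J'\cap J''=\emptyset$, so the covering-lemma pushout would have the initial object in the middle, not $|\mathrm{sk}_{k-1}\perp^{\ast+1}_{n+1}(\perp_{n+1}F)|$.  Moreover, the nerve condition $N_\bullet(I)=N_\bullet(J')\cup N_\bullet(J'')$ fails: a chain $S_0\supset S_1\supset\cdots$ in $\mathscr{P}_0([k])^{op}$ can start in $J'$ (with $0\in S_0$) and end in $J''$ (with $0\notin S_m$), and such a chain lies in neither nerve.

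The fix is to use \emph{overlapping} pieces.  Take $I'=\{S:0\in S\}$ (a full cube, with terminal object $\{0\}$ in the opposite order; its hocolim is $\perp_{n+1}F$) and $I''=\{S\in\mathscr{P}_0([k]):S\neq\{0\}\}$ (everything except that terminal object).  One checks easily that any chain either avoids $\{0\}$ (so lies in $I''$) or has $\{0\}$ as its final term, forcing $0\in S_i$ for all $i$ (so lies in $I'$); hence the nerves cover.  The intersection $I'\cap I''=\{S:0\in S,\,|S|\geq 2\}$ is isomorphic via $S\mapsto S\setminus\{0\}$ to $\mathscr{P}_0([k-1])^{op}$, and the value at $S$ is $\perp^{|S|}_{n+1}F=\perp^{|S\setminus\{0\}|}_{n+1}(\perp_{n+1}F)$, so its hocolim is $|\mathrm{sk}_{k-1}\perp^{\ast+1}_{n+1}(\perp_{n+1}F)|$ by Lemma~\ref{lem:sk_k-hocolimcube}.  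Finally, the inclusion $\{S:0\notin S\}\hookrightarrow I''$ is homotopy cofinal (the map $S\mapsto S\setminus\{0\}$ is a left adjoint in the opposite category), so $\hocolim_{I''}\simeq|\mathrm{sk}_{k-1}\perp^{\ast+1}_{n+1}F|$.  With this covering the pushout is exactly the one claimed.  Your ``bookkeeping'' concern about matching the legs with $\epsilon$ and the cotriple structure is then automatic: the maps in the pushout are those induced by inclusions of subposets, and under the cofinal identification these are exactly the counit and the augmentation.
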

\smallskip

\if false
\begin{lem}\label{lem:skelperp}
The homotopy pushout of the following diagram is equivalent to $||\mathrm{sk}_k \perp^\ast_{n+1} F||$:
\[
\xymatrix{
||\mathrm{sk}_{k-1} \perp^\ast_{n+1} (\perp_{n+1} F)|| \ar[r]\ar[d] & \perp_{n+1} F\\
||\mathrm{sk}_{k-1} \perp^\ast_{n+1}F)||
}
\]
\end{lem}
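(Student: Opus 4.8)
The plan is to reduce the statement to the standard one-step recursion in the skeletal filtration of a simplicial object, and then to identify the attaching data using the special structure of a cotriple resolution.

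First I would record the general pushout presentation of the skeletal filtration: for a (Reedy cofibrant) simplicial object $Y_\bullet$ in a simplicial model category, $\mathrm{sk}_k Y_\bullet$ is the pushout of
\[ \mathrm{sk}_{k-1}Y_\bullet \longleftarrow (Y_k\otimes\partial\Delta^k)\cup_{L_kY_\bullet\otimes\partial\Delta^k}(L_kY_\bullet\otimes\Delta^k) \longrightarrow Y_k\otimes\Delta^k , \]
and since the left-hand map is a cofibration this is already a homotopy pushout; realizing and using $|\Delta^k|\simeq\ast$ presents $|\mathrm{sk}_k Y_\bullet|$ as the homotopy pushout of $|\mathrm{sk}_{k-1}Y_\bullet|$, of $Y_k$, and of a term built from the latching object $L_kY_\bullet$ and $\partial\Delta^k$. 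I would apply this with $Y_\bullet=\perp_{n+1}^{\ast+1}F$, so $Y_k=\perp_{n+1}^{k+1}F$ (replacing $\perp_{n+1}^{\ast+1}F$ by a Reedy cofibrant model first if needed, which does not affect the conclusion).

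The crux is then to recognize the latching term of $\perp_{n+1}^{\ast+1}F$ in terms of the resolution of $\perp_{n+1}F$. The structural input is that $\perp_{n+1}^{\ast+1}(\perp_{n+1}F)$ is, up to the evident reindexing, the d\'ecalage (shift) of $\perp_{n+1}^{\ast+1}F$: in simplicial degree $j$ it is $\perp_{n+1}^{j+2}F$ carrying the faces and degeneracies of $\perp_{n+1}^{\ast+1}F$ other than the last face, so that applying $\perp_{n+1}$ to the resolution supplies an extra degeneracy. Tracking this through, the latching object $L_k\perp_{n+1}^{\ast+1}F$ together with its attaching map into $\mathrm{sk}_{k-1}\perp_{n+1}^{\ast+1}F$ matches the top-dimensional latching and attaching data of $\mathrm{sk}_{k-1}\perp_{n+1}^{\ast+1}(\perp_{n+1}F)$, which is what lets one glue $\perp_{n+1}F$ onto $\mathrm{sk}_{k-1}\perp_{n+1}^{\ast+1}F$ along $\mathrm{sk}_{k-1}\perp_{n+1}^{\ast+1}(\perp_{n+1}F)$. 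One then checks that the two maps out of $|\mathrm{sk}_{k-1}\perp_{n+1}^{\ast+1}(\perp_{n+1}F)|$ so obtained are the augmentation $|\mathrm{sk}_{k-1}\perp_{n+1}^{\ast+1}(\perp_{n+1}F)|\to\perp_{n+1}F$ and the map to $|\mathrm{sk}_{k-1}\perp_{n+1}^{\ast+1}F|$ induced by $\epsilon_F\colon\perp_{n+1}F\to F$, which gives exactly the claimed homotopy pushout. Alternatively, one may argue entirely on the cube side: by Lemma \ref{lem:sk_k-hocolimcube} it suffices to decompose $\mathrm{hocolim}_{\mathscr{P}_0([k])^{op}}\perp_{n+1}^{\ast+1}F$, and isolating a single vertex of $[k]$ yields a Mayer--Vietoris square of homotopy colimits (of the type in Lemma \ref{l:Bar}.3) whose pieces are the subcube avoiding that vertex ($\mathrm{sk}_{k-1}\perp_{n+1}^{\ast+1}F$), a contractibly-indexed piece ($\perp_{n+1}F$), and an overlap recording the resolution of $\perp_{n+1}F$.

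The main obstacle is exactly this identification of the attaching data: because of degeneracies the passage from $\mathrm{sk}_{k-1}$ to $\mathrm{sk}_k$ does not split off a naive "top cell", so one must use the d\'ecalage identification (equivalently, on the cube side, homotopy cofinality to convert the naive "uses a fixed vertex / does not" partition of $\mathscr{P}_0([k])$ into an honest decomposition of its nerve into subcomplexes) and then verify on the nose that the overlap term is $\mathrm{sk}_{k-1}\perp_{n+1}^{\ast+1}(\perp_{n+1}F)$ with the two structure maps described above. Once that is in place the homotopy-pushout statement is formal, and it is consistent with the heuristic limit $k\to\infty$ used in Proposition \ref{prop:alldim}, where the right-hand map becomes the equivalence $|\perp_{n+1}^{\ast+1}(\perp_{n+1}F)|\simeq\perp_{n+1}F$.
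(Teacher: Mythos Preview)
Your ``alternative'' at the end of the second paragraph is precisely what the paper does. The paper's entire argument is one line: convert $|\mathrm{sk}_k\perp_{n+1}^{\ast+1}F|$ to $\mathrm{hocolim}_{\mathscr{P}_0([k])^{op}}\perp_{n+1}^{\ast+1}F$ via Lemma~\ref{lem:sk_k-hocolimcube}, then apply the covering lemma for homotopy colimits over cubes (the dual of Lemma~\ref{l:holim}(5), i.e.\ Lemma~\ref{l:Bar}(3)) to the $(k+1)$-cube $U\mapsto \perp_{n+1}^{|[k]\setminus U|}F$, splitting off one direction. One piece is a full $k$-cube with a terminal object, whose hocolim is $\perp_{n+1}F$; the other piece and the overlap are identified with the two $\mathrm{sk}_{k-1}$ terms by another appeal to Lemma~\ref{lem:sk_k-hocolimcube}. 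No latching objects, no Reedy replacement, no d\'ecalage bookkeeping.

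Your primary route through the skeletal pushout and latching objects is reasonable and would also succeed, but it is a longer road to the same destination. The step you flag as the ``main obstacle''---identifying the gluing term $(Y_k\otimes\partial\Delta^k)\cup_{L_kY\otimes\partial\Delta^k}(L_kY\otimes\Delta^k)$ with $|\mathrm{sk}_{k-1}\perp_{n+1}^{\ast+1}(\perp_{n+1}F)|$---genuinely requires unpacking the d\'ecalage identification and matching structure maps, and your sketch of that step is a bit loose (the latching object of a cotriple resolution is not literally the top piece of the shifted resolution; one needs the extra-degeneracy contraction to pass between them). By contrast, the cube approach bypasses all of this: the cofinality in Lemma~\ref{lem:sk_k-hocolimcube} has already absorbed the degeneracy bookkeeping, so the decomposition becomes a straightforward partition of a punctured cube. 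Since you already saw that route, I would lead with it.
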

\fi

Note that Lemma \ref{lem:skelperp} is a simple application of the covering lemma for hocolim cubes (given on p. 299 of \cite{G2} as the dual situation to  Proposition 0.2 in the same paper) to the $(k+1)$-cube that at $U \in \mathscr{P}([k])$ is $\perp^{|[k]-U|}_{n+1}F$, whose co-punctured (with the final element of the cube removed) hocolim is equivalent to $|\mathrm{sk}_k \perp^{\ast +1}_{n+1} F|$ by Lemma \ref{lem:sk_k-hocolimcube}.

%%%%%%%%%%%%%%%%%%%%%%%%%%%%%%%%
\subsection{General Case of Induction} 
%%%%%%%%%%%%%%%%%%%%%%%%%%%%%%%%
Assume that $|\mathrm{sk}_{k-1}( \perp^{\ast +1}_{n+1}F(A))| \rightarrow F(A) \rightarrow \mathrm{T}_n^{k} F(A)$ is a homotopy fiber sequence. Then construct %, as we did for the base case, 
the diagram of homotopy fiber sequences in Figure \ref{fig:perp-hof-k}.

\begin{figure}[h!]
\[
%\scalebox{.85}{$
\xymatrix{
|\mathrm{sk}_{k-1} \perp^{\ast +1}_{n+1} (\perp_{n+1} F(A))| \ar[r]\ar[d]&\perp_{n+1} F(A)\ar[r]\ar[d] & \perp_{n+1} \mathrm{T}_n^{k} F(A)\ar[d]\\
|\mathrm{sk}_{k-1} \perp^{\ast +1}_{n+1} F(A) | \ar[d]\ar[r]&F(A) \ar[r]\ar[d]        & \mathrm{T}_n^{k} F(A)\ar[d]\\
|\mathrm{sk}_{k-1} \perp^{\ast +1}_{n+1} \mathrm{T}_n F(A) | \ar[r]&\mathrm{T}_nF(A)\ar[r] & \mathrm{T}_n^{k+1} F(A)\\
}
%$}
\]
\caption{$\perp_{n+1}$ and iterated fiber diagram}
\label{fig:perp-hof-k}
\end{figure}

Since homotopy limit constructions commute, $\perp_{n+1} \mathrm{T}_n^{k} F(A) \simeq \mathrm{T}_n^{k} \perp_{n+1} F(A)$, and the top line is also a homotopy fiber sequence. 

Application of Lemma \ref{lem:yoga} to the diagram of Figure  \ref{fig:perp-hof-k}  lets us conclude that Figure \ref{fig:cartsq-k} is a cartesian square. Since we're in spectra, we also know that it is cocartesian.

\begin{figure}[h]
\[
\xymatrix{
|\mathrm{sk}_{k-1} \perp^{\ast +1}_{n+1} (\perp_{n+1} F)(A)| \ar[r]\ar[d] & \perp_{n+1} F(A)\ar[d]\\
|\mathrm{sk}_{k-1} \perp^{\ast +1}_{n+1}F(A)| \ar[r]& \mathrm{hofib}(F(A) \rightarrow \mathrm{T}_n^{k+1} F(A))\\
}
\]
\caption{Desired cartesian square for general $k$}
\label{fig:cartsq-k}
\end{figure}

We now apply Lemma \ref{lem:skelperp} to conclude that $\mathrm{hofib}(F(A) \rightarrow \mathrm{T}_n^{k+1} F(A)) \simeq |\mathrm{sk}_k \perp^{\ast +1}_{n+1} F(A)|$. \qed

%---------------------------------

\end{document}

@article{Dev,
  title={{The topology of spaces of knots: cosimplicial models}},
  author={Sinha, D.P.},
  journal={American Journal of Mathematics},
  volume={131},
  number={4},
  pages={945--980},
  year={2009},
  publisher={The Johns Hopkins University Press}
}

For example, $cr_2F(X,Y)$ is the total fiber of the  $2$-cube
\[\xymatrix{F(X\amalg _A Y)\ar[r]\ar[d]&F(X\amalg _A B)\ar[d]\\
F(B\amalg _AY)\ar[r]&F(B\amalg _AB).\\}\]

\subsection{Cotriples from Cross Effects}

The cross effect functors play an essential role in the construction of Taylor towers in \cite{RB}.  When ${\sc C}$ is a pointed category and ${\sc A}$ is an abelian category, it is straightforward
to show that $(\Delta^*, cr_n)$ is an adjoint pair of functors between the categories 
$\WR({\sc C}, {\sc A})$ and $\WR({\sc C}^{\times n}, {\sc A})$.   (Here $\Delta ^*$ denotes 
precomposition with the diagonal functor.)   As a consequence, $\perp_n=\Delta^*\circ cr_n$ 
is a cotriple which was used to construct the $(n-1)st$ term in the Taylor tower of a functor $F$.  

In trying to replicate this process for functors from $\Cf$ to $\D$, one encounters an obstruction
almost immediately.   The functors $\Delta ^*$ and $cr_n$ no longer form a strict adjoint pair.   However,  Goodwillie shows that they form an adjoint pair up to weak equivalence in a topological setting \cite{G3}.  Inspired by his result, we examine to what extent the expected adjunction of \cite{RB} is an adjoint pair up to weak equivalence in this setting.
%Inspired by his result, we first establish an adjunction up to weak equivalence in the next proposition, then identify a sequence of adjoint pairs that yields the composition $\Delta^*\circ cr_n$ as a cotriple.   
When trying to identify a strict adjunction with $\Delta ^*$ and $cr_n$, one runs into problems with $tH$.   When $\sc C$ is pointed  and $\sc A$ is abelian, $tH$
splits off naturally from $H$ as a direct summand.   This splitting becomes a natural isomorphism
when $H$ is weakly $n$-reduced, and the existence of this splitting and natural isomorphism enabled us to show that $(\Delta ^*, cr_n)$ form an adjoint pair in the abelian case \cite{RB}.   To understand better where this isomorphism arises in this process, one needs to recognize that the
adjoint pair $(\Delta ^*, cr_n)$ is  the composition of two adjoint pairs,  one of which
involves $t$.   In particular, the left adjoint of $t$ is $U$, the forgetful functor from weakly $n$-reduced
functors to functors of $n$ variables.   

In the 
present setting, we can only assume the existence of a weak equivalence from $tH$ to $H$
when $H$ is weakly $n$-reduced.   This means that we can 
only prove that $U$ and $t$  form an adjoint pair after formally inverting this weak equivalence.  To obtain a strict adjunction, we replace the category $\WR(\Cfn n, \D)$ with the full subcategory $\WR^+(\Cfn n, \D)$ whose objects are functors of the form $tH$.  We construct a natural transformation $\Delta_H$ which is a natural section to both of the natural transformations $\gamma_{tH}:ttH\rightarrow tH$ and $t\gamma_H: ttH\rightarrow tH$.  The natural transformation $\gamma_{tH}$ is the natural transformation of Definition 3.2, applied to the functor $tH$.  To define the natural transformation $t\gamma_H$, we note that the natural transformation $\gamma_H$ produces a map of cubes $\gamma_H: tH(T)\to H(T)$ (one map for each $T\in P({\bf n})$). This in turn produces a map of total fibers of cubes; this map is $t\gamma_H$.

Our first step is to construct the section, $\Delta$.  We will use this section to prove that the pair $(U^+, t^+)$ obtained by restricting $(U,t)$ to $\WR^+(\Cfn n,\D)$ is an adjoint pair.   We will then discuss the second adjoint pair alluded to earlier, and show 
that it can be coupled with $(U^+,t^+)$ to yield $\perp _n$ as a cotriple.

\begin{lem} \label{l:section} For any functor $H\in {\Fun}({\Cfn n}, \D)$, 
\begin{enumerate}
\item the natural transformation $\gamma_{tH}: t(tH) \to tH$ has a section $\Delta_H$;
\item  the section $\Delta$ is functorial in $H$;  and
\item  the natural transformation $\Delta_H$ is also a section to $t\gamma_H$.
\end{enumerate}
\end{lem}

\begin{proof} For (1), let ${\bf X}=(X_1, X_2, \dots, X_n)$ be an $n$-tuple of objects in $\Cf$.  Much of this proof involves using the properties of homotopy limits and fibers summarized in Section 2 to show
that $ttH({\bf X})$ is isomorphic to 
\[\operatorname{hofib}\left ( \underset  {(S,T)\in P({\bf n})\times P({\bf n})} {\holim} H^{\bf X}_B(S\cup T)
\rightarrow \underset {(S,T)\in P({\bf n})\times P({\bf n})-\emptyset\times\emptyset }{\holim}H^{\bf X}_B(S\cup T)\right ).\] 
Once we have done this, we will use this reformulation of $ttH({\bf X})$ to construct the desired 
section.  

To start, we note that  the fact that the $n$-cube $H^{\bf X}_B$ has an initial object, 
$H_B^{\bf X}(\emptyset )$, implies  that we can replace $H^{\bf X}_B(\emptyset)$ with 
$\holim _{P({\bf n})}H^{\bf X}_B$ in the definition of the total fiber of $H^{\bf X}_B$ to obtain
$$
tH({\bf X}):=\tfiber H^{\bf X}_B\cong \operatorname{hofib}\left (\underset {P({\bf n})}{\holim} H^{\bf X}_B \rightarrow \underset {P_0({\bf n})}{\holim} H^{\bf X}_B\right ),
$$
and, similarly,
\begin{equation}\label{ttH} ttH({\bf X}):=\tfiber (tH)^{\bf X}_B\cong \operatorname{hofib}\left (\underset {P({\bf n})}{\holim} (tH)^{\bf X}_B \rightarrow \underset {P_0({\bf n})}{\holim} (tH)^{\bf X}_B\right ).
\end{equation}

For each $S\in P({\bf n})$, $tH^{\bf X}_B(S)$ is the total fiber of the $n$-cube $H^{{\bf X}(S)}_B$ 
where ${\bf X}(S)=(A_1, A_2, \dots, A_n)$ with 
\[ A_i = \begin{cases} X_i & \text{if\ \ } i\notin S \\ B & \text{if \ \ } i \in S.\end{cases}\]
Moreover, it is straightforward to see that for $T\in P({\bf n})$, 
$$
H^{{\bf X}(S)}_B(T)=H^{\bf X}_B(S\cup T).
$$
Using these observations and applying Lemmas \ref{l:holim} and \ref{iteratedfiber} to (\ref{ttH}), we see that 
\begin{align*}
ttH({\bf X})&=\operatorname{hofib}\left (\underset{S\in P({\bf n})}{\holim} tH^{\bf X}_B(S)\rightarrow
\underset{S\in P_0({\bf n})}{\holim} tH^{\bf X}_B(S)\right )\\
&\cong \operatorname{hofib} \left (\vcenter{\xymatrix{\underset{S\in P({\bf n})}{\holim}\left ( \operatorname{hofib}\left ( \underset{T\in P({\bf n})}{\holim}H^{\bf X}_B(S\cup T)\rightarrow \underset{T\in P_0({\bf n})}{\holim}H^{\bf X}_B(S\cup T)\right )\right)\ar[d]\\
\underset{S\in P_0({\bf n})}{\holim}\left (\operatorname{hofib}\left (\underset {T\in P({\bf n})}{\holim}H^{\bf X}_B(S\cup T)\rightarrow \underset{T\in P_0({\bf n})}{\holim}H^{\bf X}_B(S\cup T)\right )\right )}}\right )\\
&\cong {\tfiber}\left (\vcenter{\xymatrix {\underset{(S, T)\in P({\bf n})\times P({\bf n})}{\holim}H_B^{\bf X}(S\cup T)\ar[r]\ar[d]&\underset{(S,T)\in P({\bf n})\times P_0({\bf n})}{\holim} H^{\bf X}_B(S\cup T)\ar[d]\\
\underset{(S,T)\in P_0({\bf n})\times P({\bf n})}{\holim}H^{\bf X}_B(S\cup T)\ar[r]&\underset{(S,T)\in P_0({\bf n})\times P_0({\bf n})}{\holim}H^{\bf X}_B(S\cup T)}}
\right ).
\end{align*}
Hence, 
$ttH({\bf X})$ is isomorphic to the homotopy fiber of the map from 
$$
\underset {(S,T)\in P({\bf n})\times P({\bf n})}{\holim}H^{\bf X}_B (S\cup T)
$$
to 
\begin{equation}\label{whatever}{\holim}\left ( \begin{matrix} & & \underset {(S,T)\in P({\bf n})\times P_0({\bf n})}{\holim}H^{\bf X}_B(S\cup T)\\
&&\downarrow\\
\underset{(S,T)\in P_0({\bf n})\times P({\bf n})}{\holim}H^{\bf X}_B(S\cup T)&\rightarrow &
\underset {(S,T)\in P_0({\bf n})\times P_0({\bf n})}{\holim}H^{\bf X}_B(S\cup T)\\
\end{matrix}\right ).
\end{equation}
But, by Lemma \ref{l:holim}.5, (4) is isomorphic to 
$$
\underset {(S,T)\in P({\bf n})\times P({\bf n})-\emptyset\times \emptyset}{\holim} H^{\bf X}_B(S\cup T).
$$
Hence, it follows that $ttH({\bf X})$ is isomorphic to 
\[\operatorname{hofib}\left( \underset  {(S,T)\in P({\bf n})\times P({\bf n})} {\holim} H^{\bf X}_B(S\cup T)
\rightarrow \underset {(S,T)\in P({\bf n})\times P({\bf n})-\emptyset\times\emptyset }{\holim}H^{\bf X}_B(S\cup T)\right ).\] 

Thus, to finish the proof, it suffices to find dashed maps that make the following diagram commute:
\[\xymatrix{
\underset{P({\bf n})}{\holim} H^{\bf X}_B \ar[r] \ar@{-->}[d]  &  
\underset{P_0({\bf n})}{\holim} H^{\bf X}_B  \ar@{-->}[d] \\
\underset{P({\bf n})\times P({\bf n})}{\holim} H^{\bf X}_B \ar[r]  &  
\underset{P({\bf n})\times P({\bf n}) - \emptyset\times \emptyset}{\holim} H^{\bf X}_B.
}\]
Since $P({\bf n})\times P({\bf n})\cong P({\bf n}\amalg {\bf n})$, the fold map $+:{\bf n}\amalg {\bf n} \to {\bf n}$ induces $P({\bf n})\times P({\bf n})\to P({\bf n})$ taking $(S,T) \mapsto S\cup T$.    By 
Lemma \ref{l:holim}, this yields a morphism $\Delta _H({\bf X}): tH({\bf X})\rightarrow ttH({\bf X})$
that is natural with respect to $H$ and ${\bf X}$, which establishes statement (2) of the Lemma.   Moreover, it is easy to see that $\gamma _{tH}$
is induced by the inclusion
$$
P({\bf n})\buildrel {\iota}\over{\longrightarrow}P({\bf n})\times P({\bf n}), S\mapsto (S, \emptyset).
$$
Since the composite
$$
P({\bf n})\buildrel {\iota}\over{\longrightarrow} P({\bf n})\times P({\bf n})\buildrel +\over \longrightarrow P({\bf n})
$$
is the identity and the constructions are natural, it follows that the induced composition
$$
tH\buildrel {\Delta _H}\over \longrightarrow ttH\buildrel {\gamma_{tH}}\over \longrightarrow tH
$$
is the identity map, and so $\Delta_H$ is a section to $\gamma_{tH}$, proving (1).

For the proof of statement (3), we note that $t\gamma_H: t(tH)\rightarrow t(H)$ is induced by the map $\mu: P({\bf n})\rightarrow P({\bf n})\times P({\bf n})$ that sends $S$ to $(\emptyset, S)$.   Since $ +\circ\mu$ is also the identity, the proof of (3) can be obtained by replacing $\iota$ by $\mu$ in the proof of (1).    

%For the proof of statement (3), the section $\Delta_H$  is established, finally, by noting that it is induced by the map $+:P({\bf n})\times P({\bf n})\to P({\bf n})$ taking $(S,T) \mapsto S\cup T$, which is a section to the map $\iota:P({\bf n})\to P({\bf n})\times P({\bf n})$ taking $S\mapsto (S, \emptyset)$.  The map $+$ is also a section to the map $\eta:P({\bf n})\to P({\bf n})\times P({\bf n})$ taking $S\mapsto (\emptyset, S)$.  The latter map induces the map $t\gamma_H:t(tH)\to t(H)$. 

\end{proof}

%\begin{rem}\label{r:2ndsection}  %The section $\Delta_H$ defined in the preceding lemma is also a section for $t\gamma_H: t(tH)\rightarrow t(H)$.   This can be proved by noting that $t\gamma_H$ is induced by the inclusion
%$$
%P({\bf n})\buildrel {\iota}\over{\longrightarrow}P({\bf n})\times P({\bf n}), S\mapsto (\emptyset, S),
%$$
%and using this in place of the inclusion inducing $\gamma_{tH}$ in the proof of the preceding lemma.
%\end{rem}
With this lemma, we can establish the desired strict adjunction.   To do so, we replace $\WR(\Cfn n,\D)$ with the full subcategory of $\Fun(\Cfn n, \D)$ generated by   functors of the form $tG$  (where $G:\Cfn n\rightarrow \D$).   We use $\WR^+(\Cfn n, \D)$ to denote this subcategory, and $\Hom_{\WR^+}$ to indicate morphism sets in this subcategory.  Hence for $G, G':\Cfn n\rightarrow \D$, 
\[\Hom_{\WR ^+}(tG,tG')=\HomFun(tG, tG').  
\]    Since every weakly reduced functor $H$ is weakly equivalent to $tH$, every weakly reduced functor is represented by an object in this subcategory.     
%That is, morphisms are%ignore the choice of section so that 
%\[
%\HomWR(tG,tG')=\HomFun(tG,tG').
%\]

This allows us to identify the adjoint pair to replace $(t, U)$.

\begin{defn}
Let  $t^+: {\Fun}({\Cfn n}, \D)\to \WR^+(\Cfn n, \D)$ be the functor given by $t^+(H) = tH$.\end{defn}

\begin{thm} The functors 
 \[\xymatrix{ {\Fun}({\Cfn n}, \D) \ar@/^/[r]^{t^+} &  {\WR^+}({\Cfn n}, \D)\ar@/^/[l]^{U^+}}\]
 are an adjoint pair of functors, with $U^+$ being the left adjoint.  (Here, $U^+$ denotes the 
 forgetful functor.)
\end{thm}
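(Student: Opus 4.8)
The plan is to exhibit the unit and counit of the adjunction explicitly and to verify the two triangle identities, the argument being driven by the functorial section $\Delta$ of Lemma~\ref{l:section} together with one combinatorial property of the fold map. The counit is forced: since $U^+t^+H = U^+(tH,\Delta_H) = tH$, we set $\epsilon_H := \gamma_H : tH\to H$, which is natural in $H$ because $\gamma$ is. For the unit, note that $t^+U^+(tG,\Delta_G) = t^+(tG) = (ttG,\Delta_{tG})$, so the natural candidate is $\eta_{(tG,\Delta_G)} := \Delta_G : tG\to ttG$. Naturality of $\eta$ is automatic: for a morphism $\sigma$ of $\WR^+$ the naturality square for $\eta$ reads, on underlying natural transformations, $t(\sigma)\circ\Delta_G = \Delta_H\circ\sigma$, which is just the defining compatibility of $\sigma$ with the sections. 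It remains to see that $\Delta_G$ is itself a morphism $(tG,\Delta_G)\to(ttG,\Delta_{tG})$ in $\WR^+$, i.e.\ that $t(\Delta_G)\circ\Delta_G = \Delta_{tG}\circ\Delta_G$; tracing through the construction of $\Delta$ in Lemma~\ref{l:section}, both composites are the map $tG\to tttG$ induced (via Lemma~\ref{l:holim}) by a threefold fold $P({\bf n})^{\times 3}\to P({\bf n})$, and the two parenthesizations agree because $\cup$ is associative.

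Next, the triangle identities. For $M=(tG,\Delta_G)$, the identity $\epsilon_{U^+M}\circ U^+(\eta_M) = \mathrm{id}_{U^+M}$ unwinds to $\gamma_{tG}\circ\Delta_G = \mathrm{id}_{tG}$, which is precisely the section property established in Lemma~\ref{l:section}, so nothing more is needed there. The remaining identity $t^+(\epsilon_H)\circ\eta_{t^+H} = \mathrm{id}_{t^+H}$ unwinds to
\[ t(\gamma_H)\circ\Delta_H = \mathrm{id}_{tH}, \]
which carries the real content. From the proof of Lemma~\ref{l:section} we have, after the identifications made there, that $ttH({\bf X})$ is a homotopy fiber of a map of homotopy limits indexed over $P({\bf n})\times P({\bf n})$, that $\Delta_H$ is induced by the fold $+ : P({\bf n})\times P({\bf n})\to P({\bf n})$, $(S,T)\mapsto S\cup T$, and that $\gamma_{tH}$ is induced by one factor inclusion $S\mapsto(S,\emptyset)$, so $\gamma_{tH}\circ\Delta_H=\mathrm{id}$ follows because $+$ composed with that inclusion is $\mathrm{id}_{P({\bf n})}$. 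The step to establish is that, under the same identifications, $t(\gamma_H)$ is induced by the \emph{other} factor inclusion $T\mapsto(\emptyset,T)$: objectwise $\gamma_H$ is the canonical map from a total homotopy fiber to the initial vertex $H^{\bf X}_B(\emptyset)$, i.e.\ restriction along the initial object of the ``inner'' copy of $P({\bf n})$ in the model of Lemma~\ref{l:section}, and applying $t$ promotes this to restriction along the corresponding factor inclusion in the doubled homotopy-limit model for $ttH$. Granting it, $t(\gamma_H)\circ\Delta_H=\mathrm{id}$ follows just as above, since $+$ composed with the second factor inclusion is again $\mathrm{id}_{P({\bf n})}$.

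With the unit, counit and both triangle identities in place, $(U^+,t^+)$ is an adjoint pair with $U^+$ the left adjoint. The single genuinely non-formal step — hence the main obstacle — is the identification of $t(\gamma_H)$ with the second factor inclusion in the doubled homotopy-limit description of $ttH$; this requires carefully carrying $t(\gamma_H)$ through the chain of isomorphisms (``homotopy limits commute,'' Lemma~\ref{l:holim}.4, and the gluing statement, Lemma~\ref{l:holim}.5) used to construct that description in the proof of Lemma~\ref{l:section}. Everything else is formal, and the reason it all fits together is the one observation that the fold map $+ : P({\bf n})\times P({\bf n})\to P({\bf n})$ is a common section of the two factor inclusions $P({\bf n})\rightrightarrows P({\bf n})\times P({\bf n})$.
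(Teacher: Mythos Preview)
Your approach via unit and counit is formally equivalent to the paper's hom-set bijection, and indeed the paper's maps $\sigma\mapsto t\sigma\circ\Delta_G$ and $\tau\mapsto\gamma_H\circ\tau$ are precisely the bijections determined by your $\eta$ and $\epsilon$; the two verifications $\Psi\Phi=\mathrm{id}$ and $\Phi\Psi=\mathrm{id}$ in the paper correspond exactly to your two triangle identities. So the structure of the argument is the same, and you have correctly isolated the one non-formal step: the identification of $t(\gamma_H)$ in the doubled homotopy-limit model of $ttH$.

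There is one point where the paper's phrasing is more robust than yours. You assert that $t(\gamma_H)\circ\Delta_H=\mathrm{id}_{tH}$ on the nose, arguing that $t(\gamma_H)$ is induced by the ``other'' factor inclusion. The paper instead observes that $t(\gamma_H)=\iota\circ\gamma_{tH}$ for a natural \emph{isomorphism} $\iota:tH\to tH$ (arising from the interchange of the two ways of viewing the initial $n$-cube inside $tH^{\bf X}_B$ via Lemma~\ref{iteratedfiber}), and hence obtains $\Phi\Psi\tau=\iota\circ\tau$. In the hom-set formulation this is harmless: one simply absorbs $\iota^{-1}$ into the bijection. In your unit/counit formulation the analogous fix would be to take $\eta=\iota^{-1}\circ\Delta_H$ (or equivalently adjust the counit), but then you must recheck that the adjusted unit is still a morphism of $\WR^+$ and that the first triangle identity survives. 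This is not difficult, but it is an extra wrinkle that your write-up does not address; your claim that $t(\gamma_H)\circ\Delta_H$ is \emph{literally} the identity requires tracking $t(\gamma_H)$ through the chain of isomorphisms in Lemma~\ref{l:section} more carefully than you have done, exactly as you acknowledge in your final paragraph. In short: your strategy is sound and essentially the same as the paper's, but the paper's formulation degrades more gracefully when the key identification holds only up to a natural isomorphism rather than equality.
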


\begin{proof}
 We describe  functions 
\[\xymatrix {{\HomFun (U^+(tG), H)}\ar@/^/[r]& {\Hom_{\WR^+} (tG, t^+H)}\ar@/^/[l]},\]
which yield the desired isomorphism.  

Let $\sigma$ be a natural transformation from $tG=U^+(tG)$ to $H$.   We associate $\sigma$ to the natural transformation $t\sigma\circ \Delta_G$ in $\Hom_{\WR^+} (tG, t^+H)$.

On the other hand, suppose that $\tau:tG \to tH$ is a natural transformation of  functors.  Associate $\tau$ to the natural transformation $\gamma_H\circ\tau$ in $\HomFun(U^+(tG), H)$.

These are inverse associations. The natural transformation $\sigma:tG\to H$ is associated first to  $ t{\sigma}\circ\Delta_G$ which is in turn sent to  
  $\gamma_H\circ t{\sigma}\circ\Delta_G$
in $\HomFun(U^+(tG), H)$.  However, from the commuting diagram
\[\xymatrix{ tG\ar[r]^{\Delta_G}&ttG\ar[r] ^{t\sigma}\ar[d]_{\gamma_{tG}} & tH\ar[d]^{\gamma_H}\\ &tG\ar[r]_{\sigma}& H,}\] 
we see that $\gamma_H\circ t{\sigma}\circ\Delta_G= \sigma\circ\gamma_{tG}\circ\Delta_G$.  Since $\Delta_G$ is a section to $\gamma_{tG}$ by Lemma \ref{l:section} (1), this is just $\sigma$.

The natural transformation $\tau: tG\to tH$ is associated first to $\gamma_H\circ \tau$ which is sent in turn to $t(\gamma_H\tau)\circ \Delta_G$ in $\Hom_{\WR^+}(tG, tH)$.  Consider the commutative diagram
\[\xymatrix{ tG\ar[r]^{\tau}\ar[d]_{\Delta_G}&tH\ar[d]^{\Delta_H}& \\
ttG\ar[r]_{t\tau}&ttH\ar[r]_{t\gamma_H}&tH.}\]
From this diagram and Lemma \ref{l:section} (3), we have
\begin{align*}
t(\gamma_H\tau)\circ \Delta_G&=t\gamma_H\circ t\tau\circ \Delta_G\\
&=t\gamma_H\circ\Delta_H\circ \tau\\
&={\rm id}\circ \tau\\
&=\tau.
\end{align*}
\end{proof}

\begin{rem}  This proof can be adapted to show that there is a kind of adjunction up to homotopy between the categories $\WR(\Cfn n, \D)$ (as opposed to $WR^+(\Cfn n,\D)$) and $\Fun(\Cfn n,\D)$.  The left adjoint in that case is again the forgetful functor, and the right adjoint is the functor sending $H$ to $tH$.  The proof is essentially the same, but one must use a general weakly reduced functor $G$ in place of $tG$ and formally invert the weak equivalence $\gamma_G: tG\to G$ instead of using $\Delta_G$.  It is the fact that one must include this formal inverse which makes us call this an adjunction ``up to homotopy".
\end{rem}

\begin{rem}\label{perp} \begin{enumerate} \item  The $n$th cross effect of a functor $F:\Cf \to \D $ is given by the composite 
\[cr_nF=({t}\circ \sqcup _n) (F).\]
\item  We denote the diagonal of $cr_nF$ by $\perp_nF$ and note that   $\perp_nF = (\Delta^*\circ U^+)\circ({t^+}\circ\sqcup _n)(F)$.
\end{enumerate}
\end{rem}
Since $\perp _n$ is the composition of the  left adjoint $\Delta ^*\circ U^+$ with the right adjoint
${t}^+\circ\sqcup _n$, it forms part of a cotriple (for more on cotriples and adjoint pairs, see section 8.6 of \cite{We}).   In particular, the counit for the adjunction produced by the pair $(\Delta ^*\circ U^+, {t}^+\circ\sqcup _n)$
yields a natural transformation $ \epsilon:\perp _n\rightarrow {\rm id}$.   And, a natural 
transformation $\delta : \perp _n\rightarrow \perp _n\perp _n$ is defined by $\Delta ^*\circ U^+(\eta 
_{{t}^+\circ\sqcup _n})$ where $\eta$ is a unit for the adjunction.   This gives us the following.

\begin{thm} The functor and natural transformations $$(\perp_n,
\delta : \perp _n\rightarrow \perp _n\perp _n,  \epsilon:\perp _n\rightarrow {\rm id})$$  form a cotriple on the category of functors $\Fun(\Cf,\D)$.
\end{thm}

\subsection{Cross effects and degree $n$ functors}
We return for a moment to the setting of \cite{RB}.   
Let ${\mathcal C}$ be a pointed category with initial/final object $\ast$,  ${\mathcal A}$ be an abelian category,  and $F:{\mathcal C}\rightarrow {\mathcal A}$. 
In this context, the $n$th cross effect of $F$ evaluated at the $n$-tuple of objects ${\bf X}$ is 
naturally isomorphic to 
$$\tfiber F((\amalg _n)^{\bf X}_{\ast})$$ where $(\amalg _n)^{\bf X}_{\ast}$ is as defined in Example \ref{e:chi}.
The pointed category ${\mathcal C}$ can be viewed as a category of the form $\Cf$ by using the 
identity morphism
${\rm id}:\ast \rightarrow \ast$ in place of $f$.  In building the $n$-cube $(\amalg _n)^{\bf X}_{\ast}$, we use the fact that $\ast$ is initial to form the coproducts and the fact that $\ast $ is final to form
the morphisms.   Recognizing this, we generalize the notion of cross effect to functors of 
an arbitrary $\Cf$  as follows.  
\begin{defn}
Let $F:\Cf \rightarrow \D$ and ${\bf X}$ be an $n$-tuple of objects in $\Cf$.  Then $cr_nF:\Cfn n\rightarrow \D$ is the functor given by 
$$cr_nF({\bf X}):= \tfiber(F((\amalg_n)_B^{\bf X})).$$
\end{defn}